\newcounter{lemmacounter}
\newcounter{thmcounter}
\newtheorem{lemma}[lemmacounter]{Lemma}
\newtheorem{theorem}[thmcounter]{Theorem}
\newcommand{\IC}{{\bf C}}
\newcommand{\IQbar}{\overline{\bf Q}}
\newcommand{\IZ}{{\bf Z}}
\newcommand{\IN}{{\bf N}}
\newcommand{\IF}{{\bf F}}
\newcommand{\IQ}{\mathbf{Q}}
\newcommand{\ssm}{\smallsetminus}
\newcommand{\legendre}{t}
\newcommand{\cN}{\mathcal N}
\newcommand{\cP}{\mathcal P}
\newcommand{\Fh}[1]{h_F{({#1})}}
\newcommand{\height}[1]{h{({#1})}}
\newcommand{\imag}[1]{{\rm Im}{({#1})}}
\newcommand{\ex}[1]{{ex}({#1})}
\def\imod#1{\allowbreak\mkern10mu({\operator@font mod}\,\,#1)}
\newcommand {\diff}[2] {\frac{\partial #1}{\partial #2}}
\begin{document}
  \title{Six unlikely intersection problems in search of effectivity}
\author[P. Habegger]{P. Habegger}
\address[P. Habegger]{University of Basel, Spiegelgasse 1, 4051 Basel,
Switzerland}
\email{philipp.habegger\makeatletter @\makeatother unibas.ch}

\author[G. Jones]{G. Jones}
\address[G. Jones]{School of Mathematics, University of Manchester, Oxford Road,
Manchester, M13 9PL, UK}
\email{gareth.jones-3\makeatletter @\makeatother manchester.ac.uk}

\author[D. Masser]{D. Masser}
\address[D. Masser]{University of Basel, Spiegelgasse 1, 4051 Basel,
Switzerland} 
\email{david.masser\makeatletter @\makeatother unibas.ch}

\maketitle

\begin{abstract}
  We investigate four properties related to an elliptic curve $E_t$ in
  Legendre form with parameter $t$: the curve $E_{t}$ has  complex
  multiplication, $E_{-t}$ has complex multiplication, a point on
  $E_t$ with abscissa $2$ is of finite order, and $t$ is a root of
  unity. Combining all pairs of properties leads to six problems on unlikely
  intersections. We solve these problems effectively and in certain cases also
  explicitly. 
\end{abstract}
\section{Introduction}


For any $\legendre\in \IQbar\ssm\{0,1\}$ we consider the elliptic curve $E_\legendre$
\begin{equation}
\label{eq:legendre}
  y^2 = x(x-1)(x-\legendre)
\end{equation}
whose $j$-invariant equals
\begin{equation}
\label{eq:jlambda}
  j = 2^8 \frac{(\legendre^2-\legendre+1)^3}{\legendre^2 (1-\legendre)^2}.
\end{equation}





We will investigate four properties of $t$.

The first is that $E_t$ should have complex multiplication. The second, not a lot different, is that $E_{-t}$ should have complex multiplication. The third is that the point $P_t=(2, \sqrt{4-2t})$ should be torsion on $E_t$. And the fourth is the simpler exponential analogue: $t$ should be a root of unity.

It is well known that each property holds for infinitely many values of $t$. This is classical for the first two properties, was proved by Masser and Zannier in \cite{MZ:torsionanomalousAJ} for the third property, and is trivial for the fourth property.

We obtain six problems by looking at all pairs of properties.  
The reader is invited to imagine a regular tetrahedron with horizontal base whose vertices are the first, second, third properties, and whose apex is the fourth property. Thus the problems correspond to the six edges.

It is known that for any pair there are at most finitely many $t$ having both properties. For the pair $\{ 1,2\}$ of the first and second property, this reduces to a special case of the Andr\'e-Oort conjecture for a certain plane curve (of degree $4$), and this conjecture was proved by Andr\'e \cite{Andre98} for all plane curves

For the pair $\{ 1,3\}$ this is a special case of a result of Andr\'e \cite{Andre01} when the complex multiplication order is maximal and in general by Pila \cite{Pila:AOMM09}.

For $\{ 1,4\}$ the result is a very special case of Pila's result \cite{Pila:AO} (as indeed the results $\{ 1,2\}, \{2,4\}$ are as well).

The finiteness in $\{ 2,3\}$ reduces to the point $(2, \sqrt{4+2t})$ on $E_t$ and so the finiteness follows as in $\{ 1,3\}$.

The finiteness in $\{ 2,4\}$ is equivalent to that in $\{1,4\}$ because $-t$ is a root of unity if and only if $t$ is.

Finally, the finiteness in $\{ 3,4\}$ can be deduced from the result of Masser and Zannier \cite{MasserZannierMathAnn} by taking complex conjugates to deduce that $\overline{P_t}=P_{\overline t}=P_{1/t} = (2, \sqrt{ 4- 2/t})$ is torsion on $E_{1/t}$, and noting that $E_t,E_{1/t}$ are isomorphic to get another point $Q_t=(2t,t\sqrt{4t-2})$ on $E_t$. One must check that $P_{t},Q_t$ are generically linearly independent; but this follows easily from the fact that $P_t$ is ramified only at $t=2$ and $Q_t$ only at $t=1/2$.

All these proofs have various elements of possible ineffectivity which
make it difficult or impossible actually to find the finite set in
question. Thus in \cite{Andre98}, \cite{Pila:AOMM09}, and \cite{Pila:AO} there is an appeal
to Siegel-Brauer on class numbers, ineffective to this day. And in
\cite{Andre01} results of Silverman on bounded height and Colmez on unbounded
height are used, which involve unspecified constants. Finally in \cite{MasserZannierMathAnn} (which also uses Silverman's result) a key role is played by Pila's estimates for rational points on analytic surfaces which involve sophisticated compactness results of Gabrielov.

Since these proofs, there has been some progress towards effectivity. Regarding our problem pairs, we now know the following.

For $\{ 1,2\}$ effective versions of Andr\'e's Theorem \cite{Andre98} were proved independently by K\"uhne \cite{K1} and Bilu, Masser and Zannier \cite{BiMaZa}. The effectivity was illustrated by the line $x+y=1$ and the hyperbola $xy=1$; however our curve is much more complicated. We succeed here to show in Theorem \ref{thm:ag} that there are no $t$ with a bit of what looks like luck.

For $\{ 1,3\}$ we use a method involving supersingular primes and the
Chebotarev Density Theorem. 
We show in Theorem \ref{thm:abgeneral} that the finite set consists exactly of $t=2$ and the two roots of $t^2-16t+16 =0$.

For $\{1,4\}$ the effectivity follows from work of
 Paulin \cite{Paulin:effAO,Paulin:effAOlinforms}, whose result even
 implies that the order of $t$ is at most $2346$. 
 Here we show in Theorem \ref{thm:ac0} that $t=1 $ or $e^{\pm  \pi i /3}$.

For $\{ 2,3\}$ we could hope to proceed as in $\{ 1,3\}$, but it is not clear if this works.
  Instead we return to the method of height comparison; we hope that
 our methods of calculating the constants may have other
 applications. Here we prove in Theorem \ref{thm:bg} 
 for example that ${2^8(t^2+t+1)^3\over t^2(t+1)^2}=j(\tau)$ for the modular function, with $a\tau^2+b\tau+c=0$ for integers $a,b,c$ satisfying
$$0\leq b \leq a \leq \sqrt{-D/3},~~~b^2-4ac=-D$$
and $0<-D<2\cdot 10^{32}$.

As mentioned above we can forget about $\{2,4\}$.

Finally for $\{ 3,4\}$ we use an idea of Boxall and
 Jones \cite{BoxallJones} involving transcendence measures to obtain a
 zero estimate which can then be fed into the Bombieri-Pila machinery
 on analytic curves (also not entirely luck-free). Here we show in
 Theorem  \ref{thm:bc0} 
 only that the order of $t$ can be bounded in an effective way.

In connexion with the third property one must mention the surprising result of Stoll \cite{Stoll} that there are no complex numbers $t \neq 0,1$ such that $(2, \sqrt{4-2t})$ and $(3, \sqrt{18-6t})$ are both torsion (the original unlikely intersection proposed in \cite{MZ:torsionanomalousAJ}).

One can also consult W\"ustholz \cite{Wustholz} for interesting discussions about the general topic of effectivity and in particular the problems of inverting the modular function.


\section{$E_t$ and $E_{-t}$ have CM $\{1,2\}$}
\label{sec:ag}
We consider the pair $\{ 1,2\}$. We prove the following.
\begin{theorem}
\label{thm:ag}
 There are no complex numbers $t \neq 0,1,-1$ such that the
 elliptic curves $E_t$ and $E_{-t}$
both have complex multiplication.
\end{theorem}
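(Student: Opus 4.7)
The plan is to recast the problem as an André--Oort question on a plane curve and then apply an effective version of André's theorem. The map $t \mapsto (j(t), j(-t))$, with $j(t) = 2^8(t^2-t+1)^3/(t^2(1-t)^2)$, parametrizes an irreducible plane curve $C \subset \IA^2$, which is the degree $4$ curve referred to in the introduction; it is symmetric under $(j_1, j_2) \mapsto (j_2, j_1)$ by $t \mapsto -t$. Finding a complex number $t \neq 0, \pm 1$ such that both $E_t$ and $E_{-t}$ have CM is equivalent to finding a point $(j_1, j_2) \in C(\IQbar)$ in which each $j_i$ is a singular modulus, so the theorem asserts that $C$ contains no such special points.

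I would then apply the effective versions of André's theorem due to K\"uhne \cite{K1} and Bilu, Masser, Zannier \cite{BiMaZa} to $C$. These provide an explicit upper bound $|D_1|, |D_2| \leq N$ for the discriminants of the CM orders attached to any special point on $C$, where $N$ is a computable function of the degree and height of a defining equation for $C$. With such a bound in hand, I would enumerate all singular moduli $j_1$ with $|D_1| \leq N$; for each, the sextic equation $j(t) = j_1$ coming from (\ref{eq:jlambda}) has at most six roots $t$ (the Legendre $S_3$-orbit), and one then tests whether $j(-t)$ is again a singular modulus by checking against the precomputed list. If no $t$ survives the test, the theorem follows.

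The principal obstacle is that the bound $N$ supplied by the general effective theorems is likely to be very large (in \cite{BiMaZa} already of order $10^{30}$ for the line $x+y=1$), so the naive enumeration threatens to be prohibitive. I expect that the ``bit of what looks like luck'' alluded to by the authors consists either of a symmetry shortcut---for instance, the inclusions $\IQ(j(t)), \IQ(j(-t)) \subset \IQ(t)$ forcing the class numbers $h(D_1)$ and $h(D_2)$ to be commensurate and comparatively small, or the coincidence $D_1 = D_2$ forced by the involution---or of a congruence sieve modulo small primes, in the spirit of the supersingular method used by the authors for the pair $\{1,3\}$, which cuts $N$ down into a range where a direct computation rules out all candidate pairs $(D_1, D_2)$ and yields the stated non-existence.
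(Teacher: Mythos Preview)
Your framework is right: the problem does reduce to showing the plane curve $F(x,y)=0$ carries no special points, and the effective André theorems of \cite{K1,BiMaZa} do in principle apply. But what you have is a plan, not a proof, and the plan as stated does not close. The general effective bounds give an $N$ far too large for a naive enumeration over all $|D|\le N$, and your guesses about the shortcut---forcing $D_1=D_2$ via the symmetry, or a supersingular congruence sieve---are not what the paper does and would not by themselves bring $N$ into range.

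The actual ``luck'' is geometric and lies in the behaviour of the curve at infinity. The curve has two branches there: one with asymptote $y=x$ and one with asymptote $y=1728$. For a generic asymptote $y=\alpha x$ or $y=\alpha$ one would need linear forms in (elliptic) logarithms, which is where the large constants come from. Here $\alpha=1$ is a root of unity and $1728=j(i)$ is itself a singular modulus, so both branches can be handled by elementary Liouville-type estimates. Concretely, the paper writes $x=R(t)$, $y=R(-t)$, analyses $R$ on the six real intervals cut out by $\{-1,0,1/2,1,2\}$ (and on the relevant arcs when $x<0$), and proves precise inequalities (Lemmas~\ref{lem:david1}--\ref{lem:david5}) comparing $y$ with $x$ or with $1728$; these, combined with the standard estimate $||j(\tau)|-e^{2\pi\,\mathrm{Im}\,\tau}|\le 2079$ and an explicit inversion of $j$ near $\tau=i$, rule out all large discriminants directly (already $|D|\ge 20$ even, $|D|\ge 15$ odd). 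The handful of small discriminants are then dispatched by hand. None of this goes through the black-box effective bound at all.

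So the gap is that you have not identified the mechanism that replaces the infeasible enumeration; the paper's argument is a bespoke analysis of this particular curve, not an application of the general theorem plus a search.
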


We note for comparison that $E_t,E_{1-t}$ are isomorphic, so there are infinitely many $t$ such that they both have complex multiplication.

Because the $j$-invariants are respectively
\begin{equation}
\label{eq:old1}
x=256{(t^2-t+1)^3 \over t^2(1-t)^2},\quad y=256{(t^2+t+1)^3 \over t^2(1+t)^2}
\end{equation}
which are related by
\begin{alignat}1
\label{eq:old2}
 &x^3y-2x^2y^2+xy^3-1728x^3+1216x^2y+1216xy^2-1728y^3+3538944x^2 \\
\nonumber
 &-2752512xy+3538944y^2-2415919104x-2415919104y+549755813888=0
\end{alignat}
this is implied by the fact that (\ref{eq:old2}) has no solutions in singular moduli $x,y$; that is, values of the $j$-function $j(\tau)$ at quadratic numbers $\tau$ in the upper half plane.

Actually the two facts are equivalent, because (\ref{eq:old1}) for $t\neq 0,1,-1$ parametrizes the entire affine curve (\ref{eq:old2}); i.e. there are no missing points. To see this we use the identity
\begin{equation}
\label{eq:old3}
F(R(t),y)=S(t)^2(y-R(-t))(y-R(-t'))(y-R(-t''))
\end{equation}
where $F(x,y)$ is the left-hand side of (\ref{eq:old2}),
$$R(t)=256{(t^2-t+1)^3 \over t^2(1-t)^2}$$
as in (\ref{eq:old1}),
$$S(t)=8{(2t-1)(t-2)(t+1) \over t(1-t)},$$
and $t'=1-t,~t''=(t-1)/t$. For any point $(x,y)$ on (\ref{eq:old2}) we can write $x=R(t)$ for $t\neq 0,1$. If $x=1728=R(1/2)=R(2)=R(-1)$ then $y=21952/9=R(-1/2)$, so we can assume $t \neq 1/2,2,-1$. It now follows from (\ref{eq:old3}) that $y$ is one of $R(-t),R(-t'),R(-t'')$. But at the same time $R(t)=R(t')=R(t'')$.

Andr\'e \cite{Andre98} determined when when there are only finitely many singular moduli $x,y$ satisfying a given polynomial equation $\mathcal{F}(x,y)=0$ and when not. 
His proof did not allow an effective determination of all solutions due to the use of the Siegel-Brauer Theorem. This ingredient was eliminated by K\"uhne \cite{K1} in 2012 and independently by Bilu, Masser and Zannier \cite{BiMaZa} in 2013 thus yielding effectivity. This was illustrated by a second paper \cite{K2} of K\"uhne showing that there are no solutions on the line $x+y=1$. And in \cite{BiMaZa} it was also shown, with a bit more trouble, that there are no solutions on the hyperbola $xy=1$.

At first sight it looks like (\ref{eq:old2}), now our $F(x,y)=0$, should be a lot more trouble. But happily it turns out that certain simplifying features for the line and the hyperbola persist for (\ref{eq:old2}). In general the difficulty depends crucially on the points at infinity on the curve ${\mathcal F}(x,y)=0$. For $x+y=1$ there is only one, corresponding to the asymptote $y=-x$. A general $y=\alpha x$ would require the use of linear forms in two logarithms; but if $\alpha$ is a root of unity then we can get by with a single logarithm, which comes down to a simple Liouville-type estimate for an algebraic number. For $xy=1$ there is an asymptote $y=0$. A general $y=\alpha$ would require the use of linear forms in two elliptic logarithms, with some resulting computational troubles; but if $\alpha$ happens itself to be a singular modulus then again we can get by with Liouville. As $0=j((1+\sqrt{-3})/2)$ this is the case here, and there remain only minor ramification troubles in inverting $j(\tau)$ near $\tau=(1+\sqrt{-3})/2$ (which has a zero of order 3 there).

For (\ref{eq:old2}) it is not absolutely obvious at first sight what happens near infinity. By symmetry we can assume $x$ large. By (\ref{eq:old1}) this implies $t$ is near $0,1,\infty$. Then $y$ is near $\infty,1728,\infty$ respectively. For the first of these $x,y$ are both about $256/t^2$, so the asymptote is $y=x$. For the third $x,y$ are both about $256t^2$, so again $y=x$. So no logarithms. As for the second, we see that $x$ is about $256/(1-t)^2$ and $y$ is about $1728$, so the asymptote is $y=1728$; and lo and behold $1728=j(\sqrt{-1})$. So no elliptic logarithms either. But the ramification persists, now of order 2 rather than 3 above.

We start with a study of the rational function $R(t)$. It is well-known to be invariant under the group generated by the transformations taking $t$ to $1/t$ and $1-t$.

We can check that for real $t$ it is monotonic on the six open
intervals
\begin{equation}
\label{eq:old4}
(-\infty,-1),~(-1,0),~(0,1/2),~(1/2,1),~(1,2),~(2,\infty).
\end{equation}
(alternately decreasing and increasing) with local minima at $t_0=-1,1/2,2$ when $R(t_0)=1728$. Thus if $x>1728$ then there are exactly six different real $t$ with $R(t)=x$, one in each of the intervals (\ref{eq:old4}); and there are no non-real $t$ with $R(t)=x$. The next result specifies $y=R(-t)$ more precisely in each of the intervals when $x \geq 1000000$ (among other things).
\bigskip
\noindent

\begin{lemma}
\label{lem:david1}
 Suppose $R(t) \geq 1000000$.
 \begin{enumerate}
 \item [(i)] 
If $t < -1$ then $R(-t) \geq R(t)-32\sqrt{R(t)}$.
\item[(ii)] 
If $-1 <  t < 0$ then $R(-t) \geq R(t)-32\sqrt{R(t)}$.
\item[(iii)] 
If $0 < t  < 1/2$ then $R(-t) \geq R(t)$.
\item[(iv)]  
 If $1/2 < t < 1$ then $R(t) \leq 265(t-1)^{-2}$ and $0<R(-t)-1728 \leq 1363(t-1)^{2}$.
\item[(v)] 
If $1<t<2$ then $R(t) \leq 270(t-1)^{-2}$ and $0<R(-t)-1728 \leq 1340(t-1)^{2}$.
\item[(vi)] 
 If $t>2$ then $R(-t) \geq R(t)$.
 \end{enumerate}
 \end{lemma}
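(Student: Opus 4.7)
The plan is to reduce all six parts to computable estimates on the single rational function $R(t) - R(-t)$. The starting point is the identity
\[
R(t) - R(-t) = -\frac{512(1+t^2)(t^4-3t^2+1)}{t(1-t^2)^2},
\]
which one proves by putting both fractions over the common denominator $t^2(1-t^2)^2$, writing $t^2 \mp t + 1 = a \mp t$ with $a = 1+t^2$, expanding $(a \pm t)^3$ and $(1 \pm t)^2$ into their symmetric and antisymmetric parts in $t$, and observing that the dominant symmetric contributions in $(a-t)^3(1+t)^2 - (a+t)^3(1-t)^2$ cancel, leaving only $-2t(1+t^2)(t^4-3t^2+1)$. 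The four real roots $\pm\alpha,\pm\beta$ of $t^4-3t^2+1$, with $\alpha = \sqrt{(3+\sqrt{5})/2}\approx 1.618$ and $\beta = \sqrt{(3-\sqrt{5})/2}\approx 0.618$, partition $\mathbb{R}$ into arcs on each of which the sign of $R(t)-R(-t)$ is determined.

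Parts (iii) and (vi) drop out of the identity without even using $R(t)\geq 10^6$: on $(0,1/2)\subset(0,\beta)$ and on $(2,\infty)\subset(\alpha,\infty)$ every factor has a definite sign making the right-hand side negative, so $R(-t)>R(t)$. For parts (iv) and (v), the hypothesis localises $t$ near $1$, the only singularity of $R$ in $(1/2,1)\cup(1,2)$. Setting $\epsilon = t-1$, we have $R(t)(t-1)^2 = f(\epsilon) := 256(1+\epsilon+\epsilon^2)^3/(1+\epsilon)^2$, a smooth bounded function with $f(0)=256$; the crude estimate $f\leq 1728$ combined with $R(t)\geq 10^6$ forces $|\epsilon|\leq 0.042$, and plugging this back sharpens to $f(\epsilon)\leq 265$. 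For the second bound, Taylor-expand $R$ at $-1$: since $-1$ is a strict local minimum with $R(-1)=1728$ and a logarithmic-derivative computation yields $R''(-1) = 2592$, one gets $R(-t)-1728 = 1296(t-1)^2 + O((t-1)^3)$, and the cubic remainder fits inside the slack $1363-1296=67$ on the narrow $\epsilon$-range. Strict positivity follows from $-1$ being a strict minimum.

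The main obstacle is parts (i) and (ii), where the inequality $R(t) - R(-t)\leq 32\sqrt{R(t)}$ is sharp to leading order. Setting $u=1/t$ and expanding as $u\to 0^-$ in (i) or $t\to 0^-$ in (ii), the master identity gives
\[
R(t) - R(-t) = 512|t| + O(1/|t|^3)
\]
in the first regime (and a mirror expression in the second), while a direct asymptotic expansion $\sqrt{R(t)} = 16|t|(1 - u/2 + (11/8)u^2 + O(u^3))$ yields $32\sqrt{R(t)} = 512|t| + 256 + O(1/|t|)$. The two leading $512|t|$ terms cancel exactly; the whole inequality hinges on the subleading constant $+256$ dominating the remaining corrections. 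The hypothesis $R(t)\geq 10^6$ forces $|t|\geq 62$ in (i) and $|t|\leq 0.017$ in (ii), which is ample for the subleading errors to be well under $256$. The verification reduces, after squaring and clearing denominators using the master identity and the explicit formula for $R(t)$, to a single polynomial inequality on an explicit interval in the variable $u$ or $t$; the delicate aspect is that the constant $32$ is essentially optimal, so one cannot afford wasteful estimates and must track the subleading balance throughout, rather than appeal only to leading asymptotics.
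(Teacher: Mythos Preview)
Your proposal is correct and runs parallel to the paper's argument, with a somewhat different organisation. The paper never writes down your factored identity for $R(t)-R(-t)$; for (i) it computes the rational function $1024R(t)-(R(-t)-R(t))^2$ directly and checks that its numerator (a degree-$10$ polynomial) has no zero for $t\le -60$, which is exactly the polynomial inequality you reach after squaring. The paper then exploits the symmetry $R(1/t)=R(t)$ to deduce (ii) from (i) and (vi) from (iii), a shortcut you do not use. Your factorisation buys a cleaner treatment of (iii) and (vi): they hold on all of $(0,1/2)$ and $(2,\infty)$, whereas the paper first localises via $R(t)\ge 10^6$ before checking a sign. For (iv) and (v) the paper avoids Taylor expansion by using the exact identity
\[
R(s)-1728 \;=\; \frac{64(s+1)^2(2s-1)^2(s-2)^2}{s^2(s-1)^2},
\]
which yields the constants $1363$ and $1340$ by direct substitution once $t$ is pinned to within $1/60$ of $1$; your second-order approach via $R''(-1)=2592$ is equivalent but leaves the remainder estimate implicit. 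One small slip: your claim $f(\epsilon)\le 265$ is fine for (iv) but fails on the (v) side, where one only gets $f(\epsilon)\le 270$ on the range you isolate---hence the asymmetry in the stated constants.
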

 \begin{proof}
  In (i) we must have $t \leq -60$, else $R(t)<R(-60)<1000000$. We calculate
$$1024R(t)-(R(-t)-R(t))^2=-262144{P(t) \over t(t-1)^4(t+1)^4}$$
for a polynomial $P(t)=t^{10}+\cdots$ with no real zero $t \leq -60$. Thus $P(t)>0$ for such $t$, and the result follows.

In (ii) we get the result immediately from (i)  by replacing $t$ by $1/t$.

In (iii) we have $t \leq 1/60$, else $R(t) < R(1/60)=R(60)<1000000$. Now
$$R(-t)-R(t)=512{P(t) \over t(t-1)^2(t+1)^2}$$
for a polynomial $P(t)$ with $P(0)>0$ and no positive real zero $t \leq 1/60$. Thus $P(t)>0$ for such $t$, and the result follows.

In (iv) we have $t \geq 1-1/60$, else $R(t) < R(1-1/60)=R(60)$. Now
$$R(t)=R(1-t)=256{(1-(1-t)+(1-t)^2)^3 \over t^2(1-t)^2}\leq 256{1 \over (1-1/60)^2(1-t)^2}<{265 \over (1-t)^2}$$
and
$$R(-t)-1728=64{(t+2)^2(2t+1)^2(t-1)^2 \over t^2(t+1)^2}<5184{(t-1)^2 \over (1-1/60)^2(2-1/60)^2}<1363(t-1)^2.$$

In (v) we have $t \leq 1+1/60$, else $R(t) < R(1+1/60)<1000000$. Now
$$R(t)=256{(t^2-t+1)^3 \over t^2(1-t)^2}\leq 256{((1+1/60)^2-(1+1/60)+1)^3 \over (1-t)^2}<{270\over (1-t)^2}$$
and
$$R(-t)-1728=64{(t+2)^2(2t+1)^2(t-1)^2 \over t^2(t+1)^2}<16(3+1/60)^2(3+1/30)^2(t-1)^2<1340(t-1)^2.$$

In (vi) we get the result immediately from (iii) using by replacing $t$ by $1/t$.

This completes the proof.
\end{proof}

We will also have to solve $R(t)=x$ for $x<1728$. Here there are no
real solutions. If $x<0$ there are still six different complex
solutions, and they have real part $1/2$ or lie on the circles $|t|=1$ and $|t-1|=1$. This can be seen from
$$R(1/2+iv)=-64{(4v^2-3)^3 \over (4v^2+1)^2},$$
which is monotonic increasing on $(-\infty,0)$, monotonic decreasing on $(0,\infty)$ and zero at $v=\pm\sqrt{3}/2$ (with graph looking a bit like a buddhist temple); thus there are solutions with $v>\sqrt{3}/2$ and $v<-\sqrt{3}/2$. And from
$$R(u+i\sqrt{1-u^2})=128{(2u-1)^3 \over u-1}$$
which is monotonic decreasing on $(-1,1)$ (with each choice of sign for the square root) and zero at $u=1/2$; so there are two solutions with $1/2<u<1$. And
$$R(1+u+i\sqrt{1-u^2})=128{(2u+1)^3 \over u+1}$$
which is similarly monotonic increasing on $(-1,1)$ and zero at $u=-1/2$; so there are two solutions with $-1<u<-1/2$.
\begin{lemma}
\label{lem:david2}
 Suppose $R(t) \leq -190000$.
 \begin{enumerate}
 \item [(i)] 
 If $t=1/2+iv$ with $v>\sqrt{3}/2$ then $|R(-t)| \geq -R(t)$.

 \item [(ii)] 
If $t=1/2+iv$ with $v<-\sqrt{3}/2$ then $|R(-t)| \geq -R(t)$.

 \item [(iii)] 
If $t=u+i\sqrt{1-u^2}$ with $u>1/2$ and $\sqrt{1-u^2}>0$ then $R(t) \geq -128(1-u)^{-1}$ and $-2593(1-u) \leq R(-t)-1728<0$.

 \item [(iv)] 
If $t=u+i\sqrt{1-u^2}$ with $u>1/2$ and $\sqrt{1-u^2}<0$ then $R(t) \geq -128(1-u)^{-1}$ and $-2593(1-u) \leq R(-t)-1728<0$.

 \item [(v)] 
If $t=1+u+i\sqrt{1-u^2}$ with $u<-1/2$ and $\sqrt{1-u^2}>0$ then $|R(-t)| \geq -R(t)$.

 \item [(vi)] 
If $t=1+u+i\sqrt{1-u^2}$ with $u<-1/2$ and
 $\sqrt{1-u^2}<0$ then $|R(-t)| \geq -R(t)$.
 \end{enumerate}
 \end{lemma}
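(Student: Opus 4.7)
The six statements split into three conjugate pairs under $t\mapsto\bar t$, which fixes each of the curves $\real{t}=1/2$, $|t|=1$, $|t-1|=1$, preserves the (real) value of $R(t)$ on them (each curve is closed under conjugation and $R$ is real there, via $R(\bar t) = R(1-t)$, $R(1/t)$, $R(t/(t-1))$ respectively, together with the $S_3$-symmetry of $R$), and sends $|R(-t)|$ to $|\overline{R(-t)}| = |R(-t)|$. So (ii), (iv), (vi) follow from (i), (iii), (v), and it suffices to treat the odd cases.

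For (iii) I would parameterise $t = u + i\sqrt{1-u^2}$ and exploit $\bar t = 1/t$ to derive the algebraic identities $t^2 - t + 1 = (2u-1)t$ and $(1-t)^2 = -2(1-u)t$ on $|t|=1$; substitution into the defining formula yields the closed form $R(t) = 128(2u-1)^3/(u-1)$ recorded just before the lemma, and $(2u-1)^3 \leq 1$ then gives $R(t) \geq -128/(1-u)$. Since $-t$ also lies on $|t|=1$ with real part $-u$, the same derivation produces $R(-t) = 128(2u+1)^3/(u+1)$, and the elementary polynomial identity $128(2u+1)^3 - 1728(u+1) = 64(u-1)(4u+5)^2$ yields
\begin{equation*}
R(-t) - 1728 = \frac{64(u-1)(4u+5)^2}{u+1}.
\end{equation*}
The bound $|R(-t)-1728| \leq 2593(1-u)$ then follows because $(4u+5)^2/(u+1)$ is monotone increasing on $(-3/4,\infty)$, with right endpoint value $81/2 < 2593/64$ at $u=1$.

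Cases (i) and (v) follow the same pattern but conclude with an honest polynomial inequality. In (i), set $t = 1/2 + iv$ and $w = 4v^2 > 3$. Direct expansion gives
\begin{equation*}
R(t)^2 = 4096\,\frac{(w-3)^6}{(w+1)^4}, \qquad |R(-t)|^2 = R(-t)\overline{R(-t)} = 4096\,\frac{(w^2+2w+49)^3}{(w+1)^2(w+9)^2},
\end{equation*}
so the claim $|R(-t)| \geq -R(t)$ reduces to the polynomial inequality $(w^2+2w+49)^3(w+1)^2 \geq (w-3)^6(w+9)^2$. Both sides are monic of degree $8$; the difference is $D(w) = 8w^7 + 280 w^6 + \cdots$ with positive leading coefficient, and the hypothesis $R(t) \leq -190000$ forces $w$ into a range $w \geq 2000$ (say), where $8w^7$ dominates all lower-order contributions by many orders of magnitude. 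Case (v) is structurally identical: with $s = t-1$ on $|s|=1$ and $\real{s} = u$, using $s^2 = 2us-1$, $(1+s)^2 = 2s(1+u)$ and $s^2+3s+3 = (2u+3)s+2$, one computes $R(t) = 128(2u+1)^3/(1+u)$ and reduces the claim to $(12u^2+24u+13)^3 \geq (2u+1)^6(5+4u)^2$. In the variable $\epsilon = 1+u$ one has the exact equality $12u^2+24u+13 = 1+12\epsilon^2$, and the difference expands as $4\epsilon + 56\epsilon^2 - 128\epsilon^3 + \cdots$; the hypothesis $R(t) \leq -190000$ confines $\epsilon$ to below $128/190000$, where the linear term dominates by a wide margin.

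The main obstacle is the explicit algebraic verification of the two polynomial inequalities in (i) and (v). Following the template of Lemma \ref{lem:david1}(i) one would prefer to display each difference as a manifestly non-negative rational function with denominator of known sign; here the inequalities are looser than in Lemma \ref{lem:david1} (the hypothesis $R(t) \leq -190000$ provides generous cushions in both cases), so finding clean identities is more a matter of presentation than of analysis.
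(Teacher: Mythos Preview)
Your approach is essentially the same as the paper's: compute $R(t)$ and $R(-t)$ explicitly on each branch and reduce the comparisons to polynomial sign questions. Your treatment of (iii) is fine and in fact slightly cleaner than the paper's, which first extracts $u\ge 1-1/1450$ from the hypothesis before bounding $(5+4u)^2/(u+1)$.

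Where the paper does better is precisely at the point you flag as an obstacle. For (i) the paper writes
\[
|R(-t)|^2-R(t)^2=\frac{32768\,P(v)}{(4v^2+1)^4(4v^2+9)^2}
\]
and observes that $P(v)$ has \emph{all coefficients non-negative}, so the inequality holds for every real $v$ and the hypothesis $R(t)\le -190000$ is never invoked. Translating to your variable $w=4v^2$, this says your difference $D(w)=(w^2+2w+49)^3(w+1)^2-(w-3)^6(w+9)^2$ has only non-negative coefficients as a polynomial in $w$; that is the ``clean identity'' you are looking for. Similarly for (v) the paper writes
\[
|R(-t)|^2-R(t)^2=-\frac{65536\,P(u)}{(u+1)(4u+5)^2}
\]
(your numerator $(12u^2+24u+13)^3-(2u+1)^6(4u+5)^2$ has $u+1$ as a factor, as you can check at $u=-1$) and verifies that $P$ has no real zero in $(-1,-1/2)$ with $P(-3/4)<0$; again no hypothesis is needed. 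Your asymptotic arguments via $w\gg 1$ and $\epsilon\ll 1$ would also work once the tails are bounded, but the paper's route avoids that bookkeeping entirely.
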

 \begin{proof}
  In (i), (ii) we find
$$|R(-t)|^2-R(t)^2=32768{P(v) \over (4v^2+1)^4(4v^2+9)^2}$$
for a polynomial $P(v)$ with non-negative coefficients, so the results are clear.

In (iii) we have $u \geq u_0=1-1/1450$, else $R(u+i\sqrt{1-u^2})>R(u_0+i\sqrt{1-u_0^2})>-190000.$ Now
$$-R(t)=128{(2u-1)^3 \over 1-u}\leq128(1-u)^{-1}$$
and
$$1728-R(-t)=64{(1-u)(5+4u)^2 \over u+1}\leq 5184{1-u \over u_0+1} \leq 2593(1-u).$$

Part (iv) follows by complex conjugation.

In (v), (vi) we have
$$|R(-t)|^2-R(t)^2=-65536{P(u) \over (u+1)(4u+5)^2}$$
for a polynomial $P(u)$ with no real zeroes between $-1$ and $-1/2$
and $P(-3/4)<0$, so the results follow.

\end{proof}

Next comes the study of $j(\tau)$ itself.

\begin{lemma}
\label{lem:david3}
If $\tau$ is in the standard fundamental domain with imaginary part
$y$ then $||j(\tau)|-e^{2\pi y}| \leq 2079$.
\end{lemma}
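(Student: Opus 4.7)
The plan is to use the classical $q$-expansion
\[
  j(\tau) \;=\; \frac{1}{q} + 744 + \sum_{n\ge 1} c_n q^n,\qquad q = e^{2\pi i\tau},
\]
whose coefficients $c_n$ are positive integers with $c_1 = 196884$, $c_2 = 21493760$, \ldots. Since $|q^{-1}| = e^{2\pi y}$, the reverse triangle inequality together with the positivity of the $c_n$ gives
\[
  \bigl||j(\tau)| - e^{2\pi y}\bigr| \;\le\; \bigl|j(\tau) - q^{-1}\bigr| \;\le\; 744 + \sum_{n\ge 1} c_n|q|^n.
\]
On the standard fundamental domain $y \ge \sqrt{3}/2$, with equality at the corners $\rho$ and $1+\rho$, so $|q| \le e^{-\pi\sqrt{3}}$ and, using positivity again, the problem reduces to bounding
\[
  S \;:=\; \sum_{n\ge 1} c_n e^{-\pi n\sqrt 3}.
\]

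The main idea is to evaluate $S$ via the modular identity $j(\tau)=j(-1/\tau)$ at the specially chosen point $\tau_0 = i\sqrt{3}/2$, whose image $-1/\tau_0 = 2i/\sqrt{3}$ lies in the fundamental domain. Writing out both $q$-expansions at the parameters $e^{-\pi\sqrt{3}}$ and $e^{-4\pi/\sqrt{3}}$ respectively and subtracting yields the exact identity
\[
  S \;=\; e^{4\pi/\sqrt 3} - e^{\pi\sqrt 3} + \sum_{n\ge 1} c_n e^{-4\pi n/\sqrt 3}.
\]
Numerically $e^{4\pi/\sqrt 3} - e^{\pi\sqrt 3} \approx 1184.8$, so the advantage of this trick is that the remaining tail $T := \sum c_n e^{-4\pi n/\sqrt 3}$ has the much smaller base $e^{-4\pi/\sqrt 3} \approx 7 \cdot 10^{-4}$ and can be treated term-by-term.

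To bound $T$ I would plug in the explicit values of $c_1, c_2, c_3, c_4$ (whose contributions are approximately $139$, $11$, $0.3$, $0.005$) and then majorize the remaining tail $\sum_{n\ge 5} c_n e^{-4\pi n/\sqrt 3}$ using the standard Petersson--Rademacher bound $c_n \le e^{4\pi\sqrt n}$, which reduces it to a rapidly convergent series $\sum e^{4\pi(\sqrt n - n/\sqrt 3)}$ whose exponent is strongly negative for $n\ge 5$. Combining gives $T \lesssim 150.3$, and therefore
\[
  744 + S \;\le\; 744 + (e^{4\pi/\sqrt 3} - e^{\pi\sqrt 3}) + T \;\le\; 2079,
\]
as required.

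The main obstacle is that the inequality is \emph{numerically tight}: the contribution $744 + e^{4\pi/\sqrt 3} - e^{\pi\sqrt 3}$ is already around $1929$, so only about $150$ units of slack remain for the residual $T$, essentially all of which is consumed by the single term $c_1 e^{-4\pi/\sqrt 3}\approx 139$. Soft asymptotics for $c_n$ would not be enough; one needs the explicit numerical values of the first few Fourier coefficients together with a verifiable geometric bound on the tail. It is this booking-keeping, rather than any conceptual step, that is the delicate part of the argument.
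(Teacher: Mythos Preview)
Your argument is correct. The paper itself does not prove this lemma at all; it simply cites Lemma~1 of Bilu--Masser--Zannier \cite{BiMaZa} and remarks that it amounts to inverting $j(\tau)$ near $\tau=\infty$. So you have supplied what the paper defers to a reference.

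Your approach is the natural $q$-expansion one, and the modular trick of evaluating $j$ at $\tau_0=i\sqrt{3}/2$ and flipping to $-1/\tau_0=2i/\sqrt{3}$ is exactly the right device: it converts the slowly convergent series $\sum c_n e^{-\pi n\sqrt{3}}$ into the explicit term $e^{4\pi/\sqrt{3}}-e^{\pi\sqrt{3}}$ plus the rapidly convergent $T=\sum c_n e^{-4\pi n/\sqrt{3}}$. Your numerics are accurate; in particular $744+S = j(i\sqrt{3}/2)-e^{\pi\sqrt{3}}\approx 2078.7$, so the constant $2079$ is essentially sharp and your observation that there is no slack beyond the first term of $T$ is the real content of the estimate. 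The only point worth tightening in a final write-up is to make the tail bound $c_n\le e^{4\pi\sqrt n}$ fully explicit (e.g.\ via Brisebarre--Philibert), but this is routine.
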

\begin{proof}
This is Lemma 1 of \cite{BiMaZa}. It is essentially inversion of $j(\tau)$ near $\tau=\infty$.
\end{proof}

Now we have the crucial inversion near $\tau=i$, first only on the
imaginary axis.

\begin{lemma}
\label{lem:david4}
 If $\tau=iy$ for $y \geq 1$ and $0 < j(\tau)-1728 \leq \delta \leq 2$ then $0 <y-1 \leq \sqrt{\delta}/150$.
 \end{lemma}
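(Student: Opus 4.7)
The plan is to compute the Taylor expansion of $\phi(y):=j(iy)$ about $y=1$ and then control the error in the quadratic approximation by explicit numerical bounds. I would use the identity $j-1728=E_6^2/\Delta$ together with $E_6(i)=0$ and $\Delta(i)\neq 0$ to see that $j-1728$ has a double zero at $\tau=i$, and then compute the leading Taylor coefficient explicitly. Ramanujan's formula $dE_6/d\tau=\pi i(E_2 E_6-E_4^2)$ gives $E_6'(i)=-\pi i\,E_4(i)^2$, and combining this with $E_4(i)^3=1728\,\Delta(i)$ produces
\[
j''(i) = -\frac{2\pi^2 E_4(i)^4}{\Delta(i)} = -3456\,\pi^2 E_4(i).
\]
Since $\phi'(y)=ij'(iy)$ and $\phi''(y)=-j''(iy)$, we have $\phi(1)=1728$, $\phi'(1)=0$, and $\phi''(1)=3456\,\pi^2 E_4(i)$. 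The very rapidly convergent $q$-series $E_4(i)=1+240\sum_{n\geq 1}\sigma_3(n)e^{-2\pi n}$ (geometric ratio essentially $e^{-2\pi}<2\cdot 10^{-3}$) easily yields $E_4(i)>1.455$, and hence $\phi''(1)>3456\,\pi^2\cdot 1.455>49600$.

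Taylor's theorem then gives $\phi(y)-1728=\tfrac12\phi''(\xi)(y-1)^2$ for some $\xi\in(1,y)$, so the lemma reduces to the uniform lower bound $\phi''(\xi)\geq 45000$ on the relevant range. The hypothesis $\phi(y)-1728\leq 2$ together with the target bound will force $y$ very close to $1$, so it suffices to work on, say, $[1,\,1+10^{-2}]$. On this interval I would bound $\phi'''$ explicitly: writing $j'''$ as a rational expression in $E_2,E_4,E_6$ by iterating the Ramanujan ODEs, and bounding each $|E_k(iy)|$ for $y\geq 1$ from the corresponding $q$-expansion, one obtains $|\phi'''(y)|\leq M$ for an explicit constant $M$, and the fundamental theorem of calculus then yields $\phi''(y)\geq \phi''(1)-M(y-1)\geq 45000$ throughout the range. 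Feeding this back into Taylor's formula gives $\phi(y)-1728\geq 22500(y-1)^2$, and the hypothesis $\phi(y)-1728=\delta$ implies $(y-1)^2\leq\delta/22500$, i.e. $y-1\leq\sqrt{\delta}/150$.

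The main obstacle is the tightness of the constants: the leading coefficient $\phi''(1)/2\approx 24825$ is only marginally larger than the target $22500=150^2$, so the error coming from the higher Taylor terms must be controlled fairly sharply. There is little slack either in the numerical estimate for $E_4(i)$ or in the bound on $|\phi'''|$, so one has to keep a couple of terms of the $q$-expansion of $E_4$ rather than just the leading one, and one needs a sufficiently small working interval around $y=1$ so that the $M(y-1)$ perturbation of $\phi''(1)$ stays under $4600$.
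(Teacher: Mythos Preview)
Your approach is correct and follows the same high-level structure as the paper's proof: restrict $y$ to a short interval above $1$, then apply Taylor's theorem with remainder and a uniform lower bound on $\phi''$ there. Your computation $\phi''(1)=3456\pi^2 E_4(i)\approx 49655$ is exactly right and matches the paper.

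The only substantive difference is in how the lower bound on $\phi''$ over $[1,1.01]$ is obtained. You propose to bound $|\phi'''|$ via the iterated Ramanujan ODEs and then control the drift $\phi''(y)-\phi''(1)$. The paper instead exploits positivity of the $q$-coefficients of $j$: writing $f(y)=e^{2\pi y}+\sum_{n\ge 1}c_n e^{-2\pi n y}$ with $c_n\ge 0$, one has
\[
f''(y)\;\ge\;(2\pi)^2 e^{2\pi}+\sum_{n\ge 1}c_n(2\pi n)^2 e^{-2\pi n y_0}
\;=\;(2\pi)^2 e^{2\pi}+f''(y_0)-(2\pi)^2 e^{2\pi y_0}
\]
for $y\in[1,y_0]$, so a single numerical evaluation $f''(1.01)>48364$ (using the closed form for $j''$ in terms of $E_2,E_4,E_6$) gives $f''\ge 46993$ on the whole interval without ever touching $j'''$. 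This monotonicity trick is cleaner and bypasses exactly the tightness issue you flag; your route works too but, as you note, requires care with the third-derivative bound.

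One small point: your justification for restricting to $[1,1+10^{-2}]$ reads as slightly circular (``the hypothesis together with the target bound will force $y$ very close to $1$''). You should instead argue directly, as the paper does: simply check numerically that $j(i\cdot 1.01)>1730$, so $j(iy)\le 1730$ forces $y\le 1.01$ by monotonicity of $j$ on the imaginary axis. This is independent of the conclusion you are trying to prove.
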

 \begin{proof}
  We note that $y \leq y_0=101/100$ else $j(\tau) > j(iy_0) > 1730$. Now for $f(y)=j(iy)=e^{2\pi y}+\cdots$ we have
$$f''(y)=(2\pi)^2e^{2\pi y}+\sum_{n=1}^\infty c_n(2\pi n)^2e^{-2\pi ny}$$
for $c_n \geq 0$. This is at least
$$(2\pi)^2e^{2\pi}+\sum_{n=1}^\infty c_n(2\pi n)^2e^{-2\pi ny_0}=(2\pi)^2e^{2\pi}+f''(y_0)-(2\pi)^2e^{2\pi y_0}.$$
A calculation shows that $f''(y_0)> 48364$ - it is most easily done using
$${d^2j\over d\tau^2}=-1152\pi^2{E_4(4E_6^2+3E_4^3-E_2E_4E_6) \over E_4^3-E_6^2}$$
for the standard modular forms $E_4,E_6$ and the related $E_2$
(definitely not Legendre curves) 
Thus $f''(y) \geq 46993$. And now
$$\delta \geq j(\tau)-1728 = f(y)-f(1) ={1 \over 2}f''(\eta)(y-1)^2$$
for $1 < \eta < y$, and the result follows.
\end{proof}

And finally the inversion near $\tau=i$ on the unit circle.

\begin{lemma}
\label{lem:david5}
If $\tau=e^{i\theta}$ for $\pi/3 \leq \theta \leq 2\pi/3$ and $-2 \leq
-\delta \leq j(\tau)-1728\leq 0$ then
$|\theta-\pi/2| \leq \sqrt{\delta}/100$.
\end{lemma}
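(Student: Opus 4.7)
The plan is to run the same Taylor-expansion argument as in the proof of Lemma \ref{lem:david4}, but now along the unit circle rather than the imaginary axis.

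First, I set $g(\theta)=j(e^{i\theta})$ for $\theta\in[\pi/3,2\pi/3]$. The involution $\tau\mapsto -1/\tau$ sends $e^{i\theta}$ to $e^{i(\pi-\theta)}$ and preserves $j$, so $g(\theta)=g(\pi-\theta)$; in particular $g'(\pi/2)=0$, and, by the same symmetry together with the standard relation $\overline{j(\tau)}=j(-\bar\tau)$, $g$ is real-valued on the arc. Differentiating $\tau=e^{i\theta}$ twice and applying the chain rule gives
$$g''(\theta)=-e^{2i\theta}\,j''(e^{i\theta})-e^{i\theta}\,j'(e^{i\theta}).$$

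Second, I compute $g''(\pi/2)$. Since $j-1728$ has a double zero at $\tau=i$ one has $j'(i)=0$, and since $E_6(i)=0$ the formula
$$\frac{d^2 j}{d\tau^2}=-1152\pi^2\frac{E_4(4E_6^2+3E_4^3-E_2E_4E_6)}{E_4^3-E_6^2}$$
recalled in the proof of Lemma \ref{lem:david4} collapses to $j''(i)=-3456\pi^2 E_4(i)$. A short estimate from the $q$-expansion of $E_4$ at $q=e^{-2\pi}$ yields $E_4(i)>1.4$, so $g''(\pi/2)=j''(i)<-47000$; in particular $g$ has a strict local maximum at $\pi/2$.

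Third, I restrict to the interval on which the hypothesis is active. Since $g$ is strictly decreasing on $(\pi/3,\pi/2]$ and strictly increasing on $[\pi/2,2\pi/3)$ with $g(\pi/3)=g(2\pi/3)=0$, the assumption $j(\tau)-1728\ge -2$ forces $|\theta-\pi/2|\le\theta_0$ for a small $\theta_0$, and a direct $q$-expansion computation shows that $\theta_0=1/100$ already works. On this interval $\imag(e^{i\theta})=\cos(\theta-\pi/2)$ is extremely close to $1$, so the $q$-series of $j'$ and $j''$ converge with geometric ratio essentially $e^{-2\pi}$; tail estimates then give $-g''(\theta)\ge 20000$ throughout.

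Finally, Taylor expansion around $\pi/2$ using $g'(\pi/2)=0$ yields
$$\delta\ge 1728-j(\tau)=g(\pi/2)-g(\theta)=-\tfrac12 g''(\eta)(\theta-\pi/2)^2\ge 10000\,(\theta-\pi/2)^2$$
for some $\eta$ between $\pi/2$ and $\theta$, from which $|\theta-\pi/2|\le\sqrt{\delta}/100$ follows at once. The only real obstacle is the numerical bookkeeping: verifying the constants $-g''(\pi/2)>47000$ and $-g''(\theta)\ge 20000$ on the small interval $|\theta-\pi/2|\le 1/100$ from the $q$-expansions of $E_2, E_4, E_6$ evaluated at $\tau$ of imaginary part near $1$, exactly parallel to the bound $f''(y)\ge 46993$ in Lemma \ref{lem:david4}.
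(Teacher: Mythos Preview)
Your overall strategy---compute $g''(\pi/2)$, restrict to $|\theta-\pi/2|\le 1/100$, bound $-g''$ uniformly from below there, then apply Taylor---is exactly the paper's. Your computation $g''(\pi/2)=j''(i)=-3456\pi^2E_4(i)$ agrees with the paper's $f''(\pi/2)=3456\pi^2 e_4$, just with a slightly weaker numerical bound on $E_4(i)$.

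The point where you gloss over a real difficulty is the claim that bounding $-g''(\theta)\ge 20000$ on the small interval is ``exactly parallel'' to Lemma~\ref{lem:david4}. In Lemma~\ref{lem:david4} one has $\tau=iy$, so $q=e^{-2\pi y}$ is real and positive, every term $c_n(2\pi n)^2e^{-2\pi ny}$ in the $q$-expansion of $f''$ is positive, and monotonicity in $y$ gives the lower bound in one line. Here $\tau=e^{i\theta}$ and $q=e^{2\pi i e^{i\theta}}$ is complex; the individual $q$-series terms of $j'$ and $j''$ are no longer real, so neither positivity nor monotonicity is available, and ``tail estimates'' alone do not pin down the sign and size of the real quantity $g''(\theta)$. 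This is precisely why the paper does \emph{not} argue parallel to Lemma~\ref{lem:david4}: it introduces the real-valued quantities $e_2(\theta),e_4(\theta),e_6(\theta)$, derives the closed formula
\[
f''=3456\pi^2 e_4-\frac{1152\pi\,e_4 e_6(3c\,e_4+\pi s\,e_2 e_4-7\pi s\,e_6)}{s(e_4^3-e_6^2)},
\]
and then bounds the perturbation of each piece from its value at $\pi/2$ via explicit Lipschitz constants $c_2',c_4',c_6'$ obtained from the Ramanujan differential equations $E_4'=\tfrac{2\pi i}{3}(E_2E_4-E_6)$ etc. The vanishing $e_6(\pi/2)=0$ makes the second term small; the main term $3456\pi^2 e_4$ moves little because $|e_4'|\le c_4'$. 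This yields the concrete $|f''|\ge 24266$.

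Your approach can be salvaged, but not as written: you would need either to bound $|g'''|$ explicitly on the arc (via the $q$-expansion with $|q|\le e^{-2\pi\cos(1/100)}$ and explicit control of the growing Fourier coefficients of $j$) and then use the mean value theorem, or to carry out essentially the paper's decomposition. Either way, the ``numerical bookkeeping'' you defer is in fact the main content of the lemma, and it is not parallel to the imaginary-axis case.
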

\begin{proof}
The analogous proof for $|j(\tau)| \leq \delta$ in \cite{BiMaZa} was
heavily geometrical, especially in Lemma \ref{lem:david2}, %
 because we were working in the complex plane. Here we restrict to real analysis. After all, $f(\theta)=j(e^{i\theta})$ is real if $\theta$ is, thanks to
$$\overline{f(\theta)}=\overline{j(e^{i\theta})}=j(-e^{-i\theta})=j(e^{i\theta})=f(\theta).$$
As $j'(i)=0$ the leading term in the power series of $j(e^{i\theta})-1728$ about $\theta=\pi/2$ involves $f''$. To evaluate this derivative we use again $E_4,E_6$ and $E_2$, for which the analogous quantities
$$e_4(\theta)=e^{2i\theta}E_4(e^{i\theta}),\quad e_6(\theta)=e^{3i\theta}E_6(e^{i\theta})$$
are also real. And adjusting $E_2(\tau)$ by $3/(\pi\imag{\tau})$ in the usual way we see too that
$$e_2(\theta)=e^{i\theta}\left(E_2(e^{i\theta})-{3 \over \pi\sin \theta}\right)$$
is real.

We now start estimating (as if we had been doing something entirely different for the last five pages).

On the fundamental domain we have
$$|E_4(\tau)| \leq 1+240\sum_{n=1}^\infty\sigma_3(n)e^{-n\pi\sqrt{3}}=C_4=E_4\left({i\sqrt{3}\over 2}\right)$$
and rather similarly
$$|E_6(\tau)| \leq 1+504\sum_{n=1}^\infty\sigma_5(n)e^{-n\pi\sqrt{3}}=C_6=2-E_6\left({i\sqrt{3} \over 2}\right),$$
$$|E_2(\tau)| \leq C_2=2-E_2\left({i\sqrt{3}\over 2}\right).$$
It follows that for $E_4'=2\pi i(E_2E_4-E_6)/3$ we have
$$|E_4'(\tau)| \leq C_4'=2\pi (C_2C_4+C_6)/3,$$
for $E_6'=\pi i(E_2E_6-E_4^2)$ that
$$|E_6'(\tau)| \leq C_6'=\pi (C_2C_6+C_4^2),$$
and for $E_2'=\pi i(E_2^2-E_4)/6$ that
$$|E_2'(\tau)| \leq C_2'=\pi (C_2^2+C_4)/6.$$
So for real $\theta$ with $\pi/3 \leq \theta \leq 2\pi/3$ we deduce
$$|e_4'(\theta)| =|2ie^{2i\theta}E_4(e^{i\theta})+ie^{3i\theta}E_4'(e^{i\theta})|\leq c_4'=2C_4+C_4',$$
$$|e_6'(\theta)| \leq c_6'=3C_6+C_6',$$
and for
$$e_2'(\theta)=ie^{i\theta}E_2(e^{i\theta})+ie^{2i\theta}E_2'(e^{i\theta})+{3 \over \pi\sin^2\theta}$$
we get
$$|e_2'(\theta)| \leq c_2'=C_2+C_2'+{4 \over \pi}.$$

A calculation yields
\begin{equation}
\label{eq:old5}
f''=3456\pi^2e_4-{1152\pi e_4e_6(3ce_4+\pi se_2e_4-7\pi
se_6) \over s(e_4^3-e_6^2)}
\end{equation}
with $c=\cos\theta,~s=\sin\theta$.

At $\theta=\pi/2$ we have the well-known values
\begin{equation}
\label{eq:old6}
e_2=0,\quad e_4=-E_4(i)=-{3\Gamma(1/4)^8 \over (2\pi)^6},\quad e_6=0.
\end{equation}
Thus
\begin{equation}
\label{eq:old7}
f''\left({\pi \over 2}\right)=3456\pi^2e_4<-49655
\end{equation}
coming from the first term in (\ref{eq:old5}). We show that the other terms in (\ref{eq:old5}) are relatively small.

Now if $|f(\theta)-1728| \leq 2$ as in the present lemma, then by
$f(\pi/2+1/100)<1726$ and monotonicity
(because $f’ \neq 0$ or equivalently $j’ \neq 0$)
 we deduce
$|\theta-\pi/2| \leq 1/100$. For such $\theta$ we have
\begin{equation}
\label{eq:old8}
\left|e_4(\theta)-e_4\left({\pi \over 2}\right)\right| \leq {c_4'\over100},\quad |e_6(\theta)| \leq
 {c_6'\over100},\quad |e_2(\theta)| \leq {c_2'\over 100}
\end{equation}
and so from (\ref{eq:old7}) we lose at most
\begin{equation}
\label{eq:old9}
3456\pi^2{c_4'\over100} < 5569.
\end{equation}
For the other three terms in (\ref{eq:old5}) we note first from (\ref{eq:old8}) (see (\ref{eq:old6}) also) that
$$|e_4(\theta)^3-e_6(\theta)^2|\geq \left|\left|e_4\left({\pi\over2}\right)\right|-{c_4'\over100}\right|^3-\left({c_6'\over100}\right)^2= \left(-e_4\left({\pi\over2}\right)-{c_4'\over100}\right)^3-\left({c_6'\over100}\right)^2\geq 2.$$

The first of these terms, which is small because of $e_6$, can now be
estimated by
\begin{equation}
\label{eq:old10}
\left. {1 \over 2}1152\pi C_4\left(3C_4\left({c_6'\over100}\right)\right)
 \middle/  \sin \left({\pi\over2}+{1\over100}\right)<8537 \right.
\end{equation}
The second, which is small due to both $e_2$ and $e_6$, by
\begin{equation}
\label{eq:old11}
{1 \over 2}1152\pi C_4\left(\pi\left({c_2'\over100}\right)\right)C_4\left({c_6'\over100}\right)<368,
\end{equation}
and the third, also doubly small, by
\begin{equation}
\label{eq:old12}
{1 \over 2}1152\pi C_4\left(7\pi\left({c_6'\over100}\right)^2\right)<10915,
\end{equation}
a bit of a shock but fine.

We deduce from (\ref{eq:old7}), (\ref{eq:old9}), (\ref{eq:old10}), (\ref{eq:old11}), (\ref{eq:old12}) that for $|\theta-\pi/2| \leq 1/100$
$$|f''(\theta)| \geq 49655-5569-8537-368-10915=24266.$$
And finally
$$\delta \geq \left|f(\theta)-f\left({\pi\over2}\right)\right| = {1 \over 2}|f''(\phi)|\left|\theta-{\pi\over2}\right|^2 \geq {1 \over 2}24266\left|\theta-{\pi\over2}\right|^2$$
leading to the result.
\end{proof}

We can now tackle (\ref{eq:old2}).

We write
$$x=j(\tau)=R(t),\quad y=j(\sigma)=R(-t)$$
with $D,E$ as the respective discriminants of $\tau,\sigma$. We can assume
$$|D| \geq |E|$$
and that $\tau,\sigma$ are in the fundamental domain, with
$$\sigma={b+\sqrt{E} \over 2a}$$
so
$$|b| \leq a \leq \sqrt{{|E|\over 3}}.$$
There are two cases according to the parity of $D$.

First suppose $D$ is even, $|D| \geq 20$. Then by conjugating we can assume $\tau=\sqrt{D}/2$. By Lemma \ref{lem:david3}
$$R(t)=j(\tau)=|j(\tau)|\geq e^{\pi\sqrt{|D|}}-2079\geq e^{\pi\sqrt{20}}-2079>1000000.$$
Now $t$ is real and Lemma \ref{lem:david1} gives six subcases.

If $t<-1$ we get
\begin{equation}
\label{eq:old13}
y \geq e^{\pi\sqrt{|D|}}-2079-32\sqrt{e^{\pi\sqrt{|D|}}-2079}.
\end{equation}
But if $a \neq 1$ or $E \neq D$ then again by Lemma \ref{lem:david3} 
$$|y|=\left|j\left({b+\sqrt{E} \over 2a}\right)\right|\leq e^{\pi\sqrt{|D|-1}}+2079<e^{\pi\sqrt{|D|}}e^{-\pi/(2\sqrt{|D|})}+2079$$
a contradiction for $|D| \geq 20$ (and even $|D| \geq 15$).

Thus $\sigma={b+\sqrt{D} \over 2}$ with $b=0,1$. If $b=0$ then $y=x$. But
$$F(x,x)=-1024(x-128)(x-2048)^2$$
and neither 128 nor 2048 are singular moduli,
cf. the
complete list of rational singular moduli in Section 12.C. \cite{Cox}.
 If $b=1$ then $j(\sigma)=j(\tau+1/2)$ so $\Phi_4(x,y)=0$ for the modular transformation polynomial. Now the real roots $x>1000000$ of the resultant of $F(x,y)$ and $\Phi_4(x,y)$ with respect to $y$ are about
$$8.219997135 \cdot 10^{10},\quad 8.225266165 \cdot 10^{10}.$$
These come nowhere near the $j(\sqrt{D}/2)$ for $|D| \geq 20$, apart from $D=-64$, where they are both are dangerously close to
$$j\left({\sqrt{-64}\over2}\right)=41113158120+29071392966\sqrt{2}\approx
8.222631632 \cdot 10^{10}.$$
However the latter cannot be a zero of the resultant, because that has irreducible factors of degrees only 4,6,7.

If $-1<t<0$ we get the same lower bound for $y$ as for $t<-1$ and so we can proceed as above.

If $0<t<1/2$ 
it is even better.

If $1/2<t<1$ then we get
$$0<j(\sigma)-1728 \leq {1363\cdot265 \over e^{\pi\sqrt{|D|}}-2079}<1.$$

Now $j$ is real only on the boundary of the fundamental domain or the
 part on the imaginary axis. And $j>1728$ only on the latter. It
 follows from Lemma \ref{lem:david4} 
 that
$$0 < {\sqrt{|E|} \over 2a}-1 < {1 \over 150}\sqrt{{1363\cdot265 \over e^{\pi\sqrt{|D|}}-2079}}< {1 \over 150}.$$
Thus
$$0<|E|-4a^2 = 4a^2\left({\sqrt{|E|} \over 2a}-1\right)\left({\sqrt{|E|} \over 2a}+1\right)<{4|D| \over 3}{1 \over 150}\sqrt{{1363\cdot265 \over e^{\pi\sqrt{|D|}}-2079}}\left(2+{1\over150}\right)<1$$
for $|D| \geq 20$, which contradicts the Fundamental Theorem of Transcendence.

If $1<t<2$ it is much the same.

And if $t>2$ it is the same as $0<t<1/2$.

This completes the case of even $D$, so we next assume $D$ is odd, now
$|D| \geq 15$. Then by conjugating we can assume
$\tau=(1+\sqrt{D})/2$. By Lemma \ref{lem:david3} 
$$R(t)=j(\tau)=-|j(\tau)|\leq -e^{\pi\sqrt{|D|}}+2079\leq -e^{\pi\sqrt{15}}+2079<-190000.$$
Now $t$ is non-real and Lemma \ref{lem:david2} 
 gives six more subcases, but up to complex conjugation only three.

If $t=1/2+iv$ with $v>\sqrt{3}/2$ then
$$|y| \geq e^{\pi\sqrt{|D|}}-2079$$
and we can argue as in (\ref{eq:old13}), even for $|D| \geq 15$ (now there are no real roots $x<-190000$ of the resultant).

If $t=u+i\sqrt{1-u^2}$ with $u>1/2$ and $\sqrt{1-u^2}>0$ then
$$-2<-{2593\cdot128 \over e^{\pi\sqrt{|D|}}-2079}<j(\sigma)-1728<0.$$
Now $\sigma$ must be on the boundary but not its vertical part:
$\sigma=e^{i\theta}$ with $\pi/3 \leq \theta \leq 2\pi/3$ but
$\theta\neq \pi/2$. Thus Lemma
\ref{lem:david5} 
gives
$$0 < \left|\theta-{\pi\over2}\right| \leq {1 \over 100}\sqrt{{2593\cdot128 \over e^{\pi\sqrt{|D|}}-2079}}.$$
So
$$0<\left|{\sqrt{|E|} \over 2a}-1\right|=|\sin\theta-1|\leq{1 \over 2} \left|\theta-{\pi\over2}\right|^2,$$
much better than before.

If $t=1+u+i\sqrt{1-u^2}$ with $u<-1/2$ and $\sqrt{1-u^2}>0$ then it is pretty much the same as $t=1/2+iv$.

This finishes the largish discriminants.

If $|D|<20$ is even then $-D=4,8,12,16$ and
$$x=1728,8000,54000,287496.$$
The first implies $y=21952/9$ not a singular modulus because not an algebraic integer. The second implies $y=10976$ not a singular modulus, or $49y^2-358528y+481890304=0$ also not an algebraic integer. The third and fourth give no algebraic integers.

If $|D|<15$ is odd then $-D=3,7,11$ and
$$x=0,-3375,-32768.$$
The first implies $y=2048/3$ not an algebraic integer. The second and third give no algebraic integers.

This completes the proof of Theorem 1.\qedhere




\section{$(2,*)$ is torsion and $E_t$ has CM $\{1,3\}$}
\label{sec:ab_general}

In this section we consider the pair $\{ 1,3\}$. We prove the following
\begin{theorem}
\label{thm:abgeneral}
  Suppose $\legendre\in\IQbar\ssm\{0,1\}$ such that $E_\legendre$ has complex
  multiplication.
If  $(2,*) \in E_\legendre(\IQbar)$
has finite order $n$, then
\begin{equation*}
n = 2 \quad\text{and}\quad \legendre = 2
\end{equation*}
or
\begin{equation*}
n = 6 \quad\text{and}\quad
\legendre^2-16\legendre+16=0.
\end{equation*}
Conversely, if $n=2$ or $n=6$ then $E_\legendre$ has complex
multiplication if $\legendre$ is a root of  the corresponding polynomial.
\end{theorem}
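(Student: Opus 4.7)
The plan is to combine supersingular reduction and the Chebotarev density theorem to bound the order $n$ of the point $P_\legendre=(2,\sqrt{4-2\legendre})$ by a small explicit constant, and then to solve the resulting polynomial conditions directly. Let $K$ be the imaginary quadratic CM field of $E_\legendre$ and let $L$ be a number field containing $\legendre$ and $\sqrt{4-2\legendre}$, so that both $E_\legendre$ and $P_\legendre$ are defined over $L$.

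A rational prime $p$ is supersingular for $E_\legendre$ if and only if $p$ is non-split in $K$, and by Chebotarev applied to $K/\IQ$ such primes form a set of density $1/2$. For an inert $p\geq 5$ of good reduction, the reduction $\tilde E_\legendre$ at any place $\mathfrak{p}$ of $L$ above $p$ is supersingular with $a_p=0$, whence $|\tilde E_\legendre(\IF_{p^2})|=(p+1)^2$. Since reduction is injective on prime-to-$p$ torsion, one concludes
\[
n \mid (p+1)^2
\]
for every sufficiently large supersingular prime $p$.

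Next I restrict the order. If $\ell$ is a prime dividing $n$, the above forces $\ell\mid p+1$ for almost every prime $p$ inert in $K$. Passing to the compositum $K\cdot\IQ(\zeta_\ell)/\IQ$ and applying Chebotarev, every Frobenius class that is nontrivial on $K$ must restrict to the complex-conjugation class $\{-1\}\subset(\IZ/\ell)^*$ on $\IQ(\zeta_\ell)$. When $K\cap\IQ(\zeta_\ell)=\IQ$ this forces $(\IZ/\ell)^*=\{-1\}$, i.e.\ $\ell=2$; when $K\subseteq\IQ(\zeta_\ell)$ it forces the set of non-squares in $(\IZ/\ell)^*$ to equal $\{-1\}$, whence $\ell=3$ and $K=\IQ(\sqrt{-3})$. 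Running the same argument up the $\IQ(\zeta_{\ell^k})$-towers rules out $27\mid n$ altogether, shows that $8\mid n$ forces $K=\IQ(i)$, and rules out $32\mid n$ even there. The net conclusion is: $n\mid 4$ in general, $n\mid 16$ when $K=\IQ(i)$, and $n\mid 36$ when $K=\IQ(\sqrt{-3})$. The delicate part of the argument is the $2$-primary analysis, since $4\mid(p+1)^2$ is automatic for every odd $p$ and the sharp bound $v_2(n)\leq 4$ must be extracted from the cyclotomic tower $\IQ(\zeta_{2^k})$ rather than from a single residue class; it is here that one also needs to track effectivity, by substituting explicit small inert primes for the density argument.

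For each of the finitely many candidate orders $n\in\{2,3,4,6,8,9,12,16,18,36\}$ the condition that $P_\legendre$ have exact order $n$ on $E_\legendre$ becomes the polynomial equation $\psi_n(2;\legendre)=0$ together with $\psi_m(2;\legendre)\neq 0$ for proper $m\mid n$, where $\psi_n$ is the $n$-th division polynomial of $E_\legendre$ viewed as an element of $\IZ[\legendre][x]$. Since the previous step pins down $K$ whenever $n>4$, one additionally demands that $j(E_\legendre)$ be a singular modulus with CM field $K$, another polynomial condition on $\legendre$ of low degree. Intersecting these finitely many systems case by case leaves exactly $\legendre=2$ (with $n=2$ and $j(E_\legendre)=1728$, CM by $\IZ[i]$) and the two roots of $\legendre^2-16\legendre+16=0$ (with $n=6$ and $j(E_\legendre)=54000$, CM by $\IZ[\sqrt{-3}]$). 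The converse assertion is a direct substitution check on these specific values of $\legendre$.
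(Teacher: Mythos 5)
Your overall strategy --- supersingular reduction at inert primes, Chebotarev to pin down the possible orders $n$, then a finite division-polynomial computation --- is exactly the paper's (a variant of Parish's argument), but the quantitative heart of your sieve is wrong in a way that leaves a genuine hole. The reduction $E_{\overline{\legendre}}$ is only defined over $\IF_{p^2}$ (not over $\IF_p$), so there is no trace ``$a_p=0$'' to invoke: what one actually gets, when $j\notin\{0,1728\}$ so that the automorphism group is $\{\pm1\}$, is ${\rm Fr}_{p^2}=[\epsilon p]$ with an undetermined sign $\epsilon$, hence $E_{\overline{\legendre}}(\IF_{p^2})\cong(\IZ/|\epsilon p-1|\IZ)^2$, which may well be $(\IZ/(p-1)\IZ)^2$; moreover the reduced point has ordinate in $\IF_{p^4}$, not $\IF_{p^2}$. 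The correct congruence, obtained by passing to $\IF_{p^4}$ where ${\rm Fr}_{p^4}=[p^2]$, is $n\mid p^2-1$, not $n\mid(p+1)^2$. This changes everything downstream: $3\mid p^2-1$ and $8\mid p^2-1$ hold for every prime $p\ge5$, so the conditions $3\mid n$ and $8\mid n$ impose no restriction whatsoever on $K$; your claims that $3\mid n$ forces $K=\IQ(\sqrt{-3})$ and that $8\mid n$ forces $K=\IQ(i)$ are false, and it is precisely on these false implications that you exclude $n=24$. The correct Chebotarev comparison (the density $1/2$ of inert primes cannot exceed the density $\#\{a:a^2=1\bmod n\}/\varphi(n)$ of primes with $p^2\equiv1\bmod n$) yields $n\mid 24$; in particular $n=24$ is a live candidate that your list $\{2,3,4,6,8,9,12,16,18,36\}$ omits and that must be killed by the explicit computation, while $9,16,18,36$ are already excluded by the density count. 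You also need, and do not supply, a \emph{strict} inequality in that comparison (the paper gets it by producing split primes whose residues also square to $1$ modulo $n$); without strictness, orders such as $120$ survive, since $\varphi(120)/2=\#\{a:a^2=1\bmod{120}\}$.

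Two smaller but real omissions. First, the supersingular argument presupposes $j\notin\{0,1728\}$ (otherwise ${\rm Fr}_{p^2}$ need not be $[\pm p]$), so the curves with extra automorphisms --- which include the actual solution $\legendre=2$ --- require a separate direct treatment; the paper does this by showing $[2](2,*)$ has non-integral abscissa above $2$ and $3$ except when $t=2$. Second, ``intersecting these systems case by case'' conceals most of the work: after factoring the division polynomials one is left with candidate $j$-invariants of degree up to $16$, and one must still prove they are not singular moduli, e.g.\ by integrality constraints, by splitting behaviour incompatible with $K(j)/\IQ$ being Galois, or by reducing at two primes and obtaining incompatible imaginary quadratic endomorphism algebras.
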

Our method uses a variation on
 Parish's argument
\cite{Parish} to bound the number of rational torsion points on elliptic curves
with complex multiplication.

Throughout this section we suppose that $E_\legendre$ has complex
multiplication by an order in an imaginary quadratic number
field $K$.
The discriminant of $K$ will be denoted by $\Delta_K < 0$.
If will also be convenient to
write $j\in \IQ(\legendre)$ for the $j$-invariant of $E_\legendre$,
it is given by (\ref{eq:jlambda}).

Let us first treat the case where $E_\legendre$ has additional
automorphisms.
\begin{lemma}
\label{lem:extraauto}
  If $t\not=2$ and $E_t$ has $j$-invariant $1728$ or $0$, then
$(2,*)$ has infinite order.
\end{lemma}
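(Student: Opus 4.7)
The hypothesis confines $t$ to a finite list. From $j = 2^8(t^2-t+1)^3/(t^2(1-t)^2)$, setting $j = 1728$ gives $t \in \{-1, 1/2, 2\}$, and with $t \neq 2$ we are left with $t \in \{-1, 1/2\}$; while $j = 0$ forces $t^2 - t + 1 = 0$, so $t = \zeta$ or $\bar\zeta$, a primitive sixth root of unity. My plan is to treat the $j = 1728$ case by a Nagell--Lutz argument on a quadratic twist, and the $j = 0$ case by comparing reductions at two auxiliary split primes.

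For $j = 1728$: $P_{-1} = (2, \sqrt{6})$ lies in $E_{-1}(\IQ(\sqrt 6))$, and under the $\IQ(\sqrt 2)$-isomorphism $E_{1/2} \simeq E_{-1}$ given by $(x, y) \mapsto (2x-1, 2\sqrt 2\, y)$, the point $P_{1/2} = (2, \sqrt 3)$ goes to $(3, 2\sqrt 6) \in E_{-1}(\IQ(\sqrt 6))$. Both have $y$-coordinate in $\sqrt 6 \,\IQ$, so the $\sqrt 6$-quadratic twist carries them to the $\IQ$-rational points $(12, 36)$ and $(18, 72)$ on the congruent number curve $E^{(6)}\colon y^2 = x^3 - 36 x$. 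One checks by the duplication formula that both doubles have $x$-coordinate $25/4$, which is not an integer; by Nagell--Lutz these doubles are non-torsion, and hence so are $(12, 36)$, $(18, 72)$, $P_{-1}$, and $P_{1/2}$.

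For $j = 0$ and $t = \zeta = (1+\sqrt{-3})/2$ I plan to argue by reduction. The prime $7$ splits in $K = \IQ(\sqrt{-3})$, and one choice of prime $\mathfrak{p} \mid 7$ gives $\zeta \equiv 5$ in the residue field $\IF_7$, whence $4 - 2\zeta \equiv 1$, so $P_\zeta$ reduces to $(2, \pm 1)$ on $y^2 = x(x-1)(x-5)$. The key identity $x(x-1)(x-5) - 1 \equiv (x-2)^3 \pmod 7$ forces the tangent at $(2, \pm 1)$ to meet the reduced curve with multiplicity three, so the reduction is a $3$-torsion point, and consequently any finite order $n$ of $P_\zeta$ satisfies $3 \mid n$. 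Similarly $13$ splits in $K$; a direct count gives $|E_\zeta(\IF_{13})| = 16$, and since $4 - 2\zeta$ is a square mod $13$ the reduction again lives in $\IF_{13}$, so the prime-to-$13$ part of $n$ divides $16 = 2^4$. Taken together these divisibility conditions force $n$ to be a pure power of $2$, contradicting $3 \mid n$. Hence $P_\zeta$ is non-torsion; the case $t = \bar\zeta$ follows by complex conjugation.

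The main obstacle is essentially bookkeeping: for each $t$ one must exhibit local data that leave no room for any hypothetical torsion order. The $j = 1728$ piece benefits from the happy coincidence that the relevant twist is the classical congruent number curve for $n = 6$, where Nagell--Lutz succeeds after a single duplication; the $j = 0$ piece needs two split primes together, since neither one alone bounds the torsion enough.
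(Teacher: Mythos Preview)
Your argument is correct but takes a genuinely different route from the paper. The paper proceeds uniformly: it computes the abscissa of $[2](2,*)$ as $-\tfrac{1}{8}(t-4)^2/(t-2)$, evaluates it at each of $t=-1,\,1/2,\,\zeta^{\pm 1}$, notes that the result is non-integral above two distinct rational primes, and invokes the integrality criterion for torsion (Silverman, Theorem~VII.3.4(a)); for $t=1/2$ a preliminary rescaling is needed because the Legendre model is not integral at $2$, but otherwise one computation covers all four values. You instead split by $j$-invariant. For $j=1728$ you twist both points onto the congruent-number curve $y^2=x^3-36x$ over $\IQ$ and apply Nagell--Lutz there after one duplication; this is the same ``double then check integrality'' idea, just carried out on a rational twist rather than over $\IQ(\sqrt 6)$. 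For $j=0$ you do something quite different, pitting reductions at split primes against each other: the order-$3$ reduction above $7$ gives $3\mid n$, while the reduction above $13$ lands in a group of order $16$, forcing the prime-to-$13$ part of $n$ to divide $16$. (One wording slip: the mod-$13$ step alone only yields $n\mid 2^4\cdot 13^b$, not that $n$ is a pure $2$-power, but since $3\mid n$ the contradiction is unaffected.) The paper's approach is shorter and uniform; yours avoids working with integrality over $\IQ(\sqrt{-3})$ at the cost of locating two auxiliary primes, and is closer in spirit to the reduction-comparison tricks used later in Section~\ref{sec:ac0}.
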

\begin{proof}
For $t\not=2$, the order of $(2,*)$ does not divide $2$ and  we have
\begin{equation*}
  [2](2,*) = \left(-\frac 18 \frac{(t-4)^2}{t-2},*\right).
\end{equation*}

  The $j$-invariant of $E_t$ is $1728$ if and only if $t\in\{-1,1/2,2\}$.
It is $0$ if and only if $t=\zeta^{\pm 1}$ with $\zeta$ a primitive
sixth root of unity.
Thus the abscissa of $[2](2,*)$ is
\begin{equation*}
 \left\{
  \begin{array}{ll}
    \frac{25}{24} &: \text{if $t=-1$,}\medskip\\
    \frac{49}{48} &: \text{if $t=1/2$, and}\medskip\\
    \frac{15}{16}\pm \frac{\sqrt{-3}}{48} &: \text{if $t=\zeta^{\pm 1}$.}
  \end{array}\right.
\end{equation*}
We observe that the abscissa is never
integral above $2$ and above $3$.
By Theorem VII.3.4(a) \cite{Silverman:AEC}
the point $[2](2,*)$ and hence $(2,*)$ has infinite order
if $t=-1$ or $t=\zeta^{\pm 1}$.
If $t=1/2$, the model determining $E_t$ is not integral at $2$.
But it is integral  in the coordinates
$x'=4x$ and $y'=8y$. There the abscissa of
$[2](2,*)$  is $49/12$ and therefore still non-integral above $2$ and
above $3$. As before we conclude that $(2,*)$ has infinite order.
\end{proof}

We also use $E_{\overline \legendre}$ to
denote the elliptic curve defined over any field $k$ of characteristic
unequal to $2$ given by the Legendre parameter
$\overline \legendre \in k\ssm \{0,1\}$.

A place $v$ of a number field $F$ is an extension from $\IQ$ to $F$ of either the
archimedean absolute value or a $p$-adic absolute value for some prime  $p$.
In the latter case we write $v\mid p$ and let $k_v$ denote the residue
field of $v$. We can and will  identify places $v\mid p$ with prime ideals in
the ring of integers of $F$ containing $p$.

In the lemma below let $(\frac{\cdot}{\cdot})$ denote the Kronecker
symbol.

\begin{lemma}
\label{lem:congruenceprep}
Let $p\ge 3$ be a prime
 with $(\frac{\Delta_K}{p})=-1$
and suppose
$v\mid p$ is a place of $K(t)$ with
 $|\legendre|_v = |\legendre-1|_v=|j|_v = |j-1728|_v = 1$.
We write $\overline\legendre\in k_v\ssm\{0,1\}$ for the reduction
of $\legendre$ modulo $v$.
Then $E_{\overline\legendre}$ is a supersingular elliptic curve and
 $\overline\legendre\in\IF_{p^2}$. Moreover, there exists
 $\epsilon\in \{\pm 1\}$ such that
 \begin{equation*}
    E_{\overline\legendre}(\IF_{p^{2m}}) \cong (\IZ/|(\epsilon p)^m-1|\IZ)^2
 \end{equation*}
for all integers $m\ge 1$.
\end{lemma}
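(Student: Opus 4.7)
The plan is to proceed in three stages: establish that $E_{\overline\legendre}$ is supersingular, deduce that the Legendre parameter $\overline\legendre$ lies in $\IF_{p^2}$, and then identify Frobenius as multiplication by $\pm p$ in order to read off the group structure of the $\IF_{p^{2m}}$-points.

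First I would verify good reduction: the discriminant of $y^2=x(x-1)(x-\legendre)$ is $16\legendre^2(\legendre-1)^2$, a $v$-unit because $p\ge 3$ and $|\legendre|_v = |\legendre-1|_v = 1$. Since $E_\legendre$ has complex multiplication by an order in $K$ and $p$ is inert in $K$ (which is exactly what $\bigl(\tfrac{\Delta_K}{p}\bigr) = -1$ says), Deuring's reduction theorem yields that $E_{\overline\legendre}$ is supersingular.

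Next I would argue $\overline\legendre \in \IF_{p^2}$. By supersingularity $\overline j \in \IF_{p^2}$, and the hypothesis $|j|_v = |j-1728|_v = 1$ excludes $\overline j \in \{0,1728\}$, so $\mathrm{Aut}(E_{\overline\legendre}/\overline{\IF_p}) = \{\pm 1\}$. Waterhouse's classification of supersingular isogeny classes over $\IF_{p^2}$ then shows that for $\overline j \neq 0, 1728$, any model $E_0/\IF_{p^2}$ of this geometric curve has its Frobenius $\pi_0 = \epsilon p$ with $\epsilon \in \{\pm 1\}$, as an endomorphism. Since $p$ is odd, $\epsilon p$ acts as $\pm 1$ on $E_0[2]$ and so fixes it pointwise; hence the three nontrivial $2$-torsion points of $E_0$ are rational over $\IF_{p^2}$. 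Writing $E_0$ in Legendre form gives a parameter $\mu \in \IF_{p^2}$, and $\overline\legendre$ belongs to the $S_3$-orbit $\{\mu, 1-\mu, 1/\mu, 1/(1-\mu), (\mu-1)/\mu, \mu/(\mu-1)\}$ of six elements of $\IF_{p^2}$.

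With $E_{\overline\legendre}$ itself now defined over $\IF_{p^2}$, the same Waterhouse input gives its Frobenius $\pi = \pi_{p^2} = \epsilon p$ as endomorphism, for an appropriate $\epsilon \in \{\pm 1\}$. Over $\IF_{p^{2m}}$ the Frobenius becomes $\pi^m = (\epsilon p)^m$, and so
\[
  E_{\overline\legendre}(\IF_{p^{2m}}) = \ker(\pi^m - [1]) = E_{\overline\legendre}\bigl[(\epsilon p)^m - 1\bigr] \;\cong\; \bigl(\IZ / |(\epsilon p)^m - 1|\IZ\bigr)^2,
\]
the last isomorphism because $(\epsilon p)^m - 1$ is coprime to $p$. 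The main obstacle is the second step: passing from $\overline j \in \IF_{p^2}$ to $\overline\legendre \in \IF_{p^2}$. The map $\lambda \mapsto j(\lambda)$ is six-to-one, so a priori $\overline\legendre$ could sit in a degree-six extension of $\IF_{p^2}$. The right handle is precisely the scalar identification $\pi = \pm p$, which simultaneously forces $\overline\legendre \in \IF_{p^2}$ and delivers the group structure; once the Frobenius has been cornered as a rational integer, the rest is bookkeeping.
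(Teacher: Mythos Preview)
Your proof is correct and follows the same three-step architecture as the paper: supersingularity from Deuring's reduction criterion (the paper cites Lang, Chapter~13, Theorem~12), then $\overline\legendre\in\IF_{p^2}$ via rationality of the full $2$-torsion on an auxiliary $\IF_{p^2}$-model with $j$-invariant $\overline j$, and finally the group structure from the identification $\pi_{p^2}=\epsilon p$.

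The one point to tighten is your justification of $\pi_{p^2}=\pm p$. Waterhouse's classification tells you which Frobenius traces occur over $\IF_{p^2}$, but it does not by itself pin down the trace for a given supersingular $j$-invariant; a priori there could be $\IF_{p^2}$-models with trace $0$ or $\pm p$. The paper's argument is more direct: both $[p]$ and ${\rm Fr}_{p^2}$ are purely inseparable endomorphisms of degree $p^2$, so by the factorisation of inseparable isogenies through Frobenius (Husem\"oller, Chapter~13, Proposition~5.4) they differ by an automorphism, which is $\pm 1$ exactly because $\overline j\notin\{0,1728\}$. This is the step where your observation ${\rm Aut}=\{\pm1\}$ actually does the work, and replacing the Waterhouse citation by this factorisation argument makes the logic self-contained.
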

\begin{proof}
First we claim that if $\overline E$ is a supersingular elliptic curve defined over
$\IF_{p^2}$ with $p\ge 3$ whose $j$-invariant is not among
$\{0,1728\}$, then there exists $\epsilon \in \{\pm 1\}$ such that
$\overline E(\IF_{p^{2m}}) \cong (\IZ/|(\epsilon p)^m-1|\IZ)^2$
for all $m\ge 1$.

 Let  $[n]$ denote
multiplication by $n$ endomorphism of $\overline E$ for any $n\in\IZ$.
As $\overline E$ is supersingular, $[p]$  is a purely inseparable
isogeny of height $2$.
By Proposition 5.4, Chapter 13 \cite{Husemoller} there is an
automorphism $u$ of $\overline E$ such that $u[p] =
{\rm Fr}_{p^{2}}$ is the
Frobenius endomorphism of degree $p^{2}$ of
$\overline E$.
In other words, ${\rm Fr}_{p^{2}}$ raises the affine coordinates to the power
$p^{2}$.
By the hypothesis on the   $j$-invariant of
$\overline E$ Theorem 10.1, Chapter III \cite{Silverman:AEC} implies  that
the only automorphisms are $\pm 1$.
Therefore, ${\rm Fr}_{p^{2}}  = [\epsilon p]$ for some
$\epsilon \in \{\pm 1\}$.
Our claim follows since
$\overline E(\IF_{p^{2m}})$ is the kernel of the separable
isogeny ${\rm Fr}_{p^{2}}^m-[1] = [(\epsilon p)^m-1]$.

We apply this claim to $E_{\overline\legendre}$, which is a
well-defined elliptic curve since $|\legendre|_v=|\legendre-1|_v=1$
and $p\not=2$.
Recall that the endomorphism ring of $E_\legendre$ is an order in the
imaginary quadratic field $K$.
The prime $p$ is inert in $K$, so
$p$ generates a prime ideal in the ring of integers of $K$.
By Theorem 12, Chapter 13 \cite{Lang:elliptic}, the
elliptic curve $E_{\overline\legendre}$ is supersingular.
The $j$-invariant $\overline j$ of $E_{\overline\legendre}$
does not lie in  $\{0,1728\}$ by hypothesis. The lemma will follow
once  we can establish $\overline t\in \IF_{p^2}$.

The fact that the Legendre parameter of a supersingular elliptic curve
in characteristic $p\ge 3$ lies in $\IF_{p^2}$ is well-known, see
Dwork's note just after the proof of his Lemma
8.7 \cite{Dwork:padiccycles}.
We give a self contained proof in our situation using the claim above.
Indeed, by Deuring's Theorem,  $\overline j$  lies in
$\IF_{p^2}$, cf. Theorem V.3.1 \cite{Silverman:AEC}.
Thus
$y^2+xy = x^3 - 36x /(\overline j-1728) - 1/(\overline j-1728)$
determines an elliptic curve $\overline E$ over $\IF_{p^2}$ with
$j$-invariant  $\overline j$. By our
claim above in the case $m=1$ and since $\epsilon p - 1\equiv 0 \imod 2$ we see that
all three points of order two of $\overline E$ are defined over
$\IF_{p^2}$.
So $\overline\legendre \in\IF_{p^2}$.
\end{proof}


\begin{lemma}
\label{lem:congruence}
Let  $p$ and $v$ be as in the previous lemma.
Suppose  that $j\not\in \{0,1728\}$ and that
$E_\legendre$
contains a point of finite order $n\ge 2$ with $p\nmid n$
whose abscissa is a  rational number with denominator coprime to $p$.
Then
\begin{equation*}
p^2\equiv 1  \imod n.
\end{equation*}
\end{lemma}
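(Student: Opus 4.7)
The plan is to reduce $P=(x_P,y_P)$ modulo an extension of the place $v$, identify the field of definition of the reduced point, and then invoke the explicit group structure of $E_{\overline\legendre}(\IF_{p^{2m}})$ from Lemma \ref{lem:congruenceprep}. The hypotheses on $j$ are exactly those needed so that the previous lemma applies.

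First I would fix an extension $w$ of $v$ to a finite extension $L$ of $K(\legendre)$ containing the coordinates of $P$. Since $x_P\in\IQ$ has denominator coprime to $p$, $P$ is integral at $w$ and does not reduce to the identity; its reduction $\overline P = (\overline{x_P},\overline{y_P})$ is therefore an honest affine point on the (supersingular, by Lemma \ref{lem:congruenceprep}) elliptic curve $E_{\overline\legendre}$ over $k_w$. Because $p\nmid n$ and $E_\legendre$ has good reduction at $w$, the standard fact that reduction is injective on prime-to-$p$ torsion (Proposition VII.3.1 of \cite{Silverman:AEC}) shows that $\overline P$ still has order exactly $n$.

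Next I would locate $\overline P$ inside a small enough finite field to apply Lemma \ref{lem:congruenceprep}. Since $x_P\in\IZ_{(p)}$, its reduction $\overline{x_P}$ lies in $\IF_p$. By Lemma \ref{lem:congruenceprep} we have $\overline\legendre\in\IF_{p^2}$, so
\begin{equation*}
\overline{y_P}^{\,2} = \overline{x_P}\,(\overline{x_P}-1)(\overline{x_P}-\overline\legendre) \in \IF_{p^2},
\end{equation*}
and hence $\overline{y_P}\in\IF_{p^4}$. Therefore $\overline P\in E_{\overline\legendre}(\IF_{p^4})$.

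Finally I would apply Lemma \ref{lem:congruenceprep} with $m=2$: the group $E_{\overline\legendre}(\IF_{p^4})\cong (\IZ/|(\epsilon p)^2-1|\IZ)^2 = (\IZ/(p^2-1)\IZ)^2$ has exponent dividing $p^2-1$. Since $\overline P$ has order $n$, this forces $n\mid p^2-1$, i.e.\ $p^2\equiv 1\imod{n}$. The only real work is the bookkeeping of places and fields of definition; the key inputs (injectivity of reduction on prime-to-$p$ torsion and the structure theorem for $E_{\overline\legendre}(\IF_{p^{2m}})$) are already in hand, so I do not anticipate a serious obstacle.
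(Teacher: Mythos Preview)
Your proposal is correct and follows essentially the same route as the paper: reduce the point modulo (an extension of) $v$, use that $\overline{x_P}\in\IF_p$ and $\overline\legendre\in\IF_{p^2}$ to get $\overline{y_P}\in\IF_{p^4}$, invoke injectivity of reduction on prime-to-$p$ torsion to preserve the order $n$, and then apply Lemma~\ref{lem:congruenceprep} with $m=2$. If anything you are slightly more explicit than the paper about extending the place and about the reference for injectivity of reduction.
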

\begin{proof}
  We keep the notation from the proof of the previous lemma
and suppose the point in question has abscissa $\xi\in \IQ$.

By hypothesis, the reduction $\overline\xi$ of $\xi$ modulo $v$ is
well-defined.
We obtain a point on
$E_{\overline\legendre}$ whose
ordinate
 $\overline\eta$ satisfies
$\overline \eta^2 = \overline \xi
( \overline \xi-1)( \overline \xi-\overline \legendre)$.
By the previous lemma we know that $\overline\legendre\in\IF_{p^2}$,
so  $\overline\eta\in\IF_{p^{4}}$.

Now the order of the reduced point
 $(\overline\xi,\overline\eta)$ equals $n$, the order
of $(\xi,*)$, because $n$ is coprime to the residue characteristic.
The previous lemma applied to $m=2$
 yields  $E_{\overline\legendre}(\IF_{p^4}) \cong
(\IZ/(p^2-1)\IZ)^2$ and so
$n \mid p^2-1$, as desired.
\end{proof}

For any integer $n \ge 1$ we set
\begin{alignat*}1
  \cN(n) =
 \{a \in(\IZ/n\IZ)^\times:\,\, &a^2 =  1 \}.
\end{alignat*}
In the next lemma we use Euler's totient
function $\varphi$.

\begin{lemma}
\label{prop:boundN}
Suppose that $j\not\in \{0,1728\}$.
If $E_\legendre$ contains a point of finite order $n\ge 2$ whose
abscissa is a rational number, then
\begin{alignat}1
\label{eq:triplesum}
\frac{1}{2}\varphi(n)
&<  \#\cN(n).
\end{alignat}
\end{lemma}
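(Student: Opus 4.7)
The plan is to combine Lemma \ref{lem:congruence} with Chebotarev's density theorem, in the spirit of Parish, by comparing two natural sets of rational primes. Fix $\legendre$ satisfying the hypotheses of the lemma: $E_\legendre$ has CM by an order in $K$, $j\not\in\{0,1728\}$, and there is a torsion point of order $n\ge 2$ whose abscissa $\xi$ lies in $\IQ$. Only finitely many rational primes $p$ fail the conditions $|\legendre|_v = |\legendre-1|_v = |j|_v = |j-1728|_v = 1$ at some $v\mid p$ in $K(\legendre)$, fail $p\nmid n$, or have $\xi$ with denominator divisible by $p$; these will be harmlessly discarded from any density argument.

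Let $A = \{p : p \text{ inert in } K\}$ and $B = \{p : p \bmod n \in \cN(n)\}$. Since $K$ is imaginary quadratic, Chebotarev gives $A$ a Dirichlet density of $1/2$. Decomposing $B$ into the $\#\cN(n)$ arithmetic progressions $p\equiv a \imod n$ for $a\in\cN(n)$ and applying Dirichlet, the density of $B$ is $\#\cN(n)/\varphi(n)$. Lemma \ref{lem:congruence} says every $p\in A$ outside the finite exceptional set lies in $B$, and therefore
\begin{equation*}
  \frac{1}{2}\le \frac{\#\cN(n)}{\varphi(n)}.
\end{equation*}

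For the strict inequality required, I would exhibit a positive-density family inside $B\ssm A$. By Chebotarev for $K(\zeta_n)/\IQ$, primes $p\equiv 1 \imod n$ that split in $K$ have positive density: if $K\subseteq\IQ(\zeta_n)$, then $p\equiv 1\imod n$ already forces $p$ to split in $K$; otherwise the two conditions are independent and each occurs with positive density. Any such prime lies in $B$ (since $1\in\cN(n)$) but not in $A$, so $\frac{1}{2}<\#\cN(n)/\varphi(n)$, which rearranges to the desired $\frac{1}{2}\varphi(n) < \#\cN(n)$.

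The main obstacle is precisely this passage from $\le$ to $<$: the inclusion $A\subseteq B$ (up to finitely many primes) only gives the weak inequality, and to sharpen it one needs the extra input that inertness in $K$ and membership in $\cN(n)$ are conditions on $p$ modulo different moduli ($|\Delta_K|$ versus $n$) and so cannot coincide on the primes. The concrete family $p\equiv 1\imod n$ split in $K$ is the cleanest witness to this, and the joint Chebotarev input over $K(\zeta_n)/\IQ$ is the only nontrivial tool required beyond Lemma \ref{lem:congruence} itself.
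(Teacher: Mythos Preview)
Your proof is correct and follows essentially the same approach as the paper: both use Lemma \ref{lem:congruence} to get the inclusion of inert primes into $\cP_2=\{p:p^2\equiv 1\bmod n\}$, compare densities via Chebotarev/Dirichlet for the weak inequality, and then sharpen to strict inequality by producing split primes with $p\equiv 1\bmod n$. Your treatment of the strict inequality is in fact slightly more direct than the paper's, which argues by contradiction using the modulus $n\Delta_K$, while you invoke Chebotarev for $K(\zeta_n)/\IQ$ to exhibit a positive-density subset of $B\smallsetminus A$ at once.
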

\begin{proof}
We fix a constant $C \ge 3n$, which may also depend on $t$ and the
point of finite order, with the following property.
If $p$ is a prime with $p\ge C$ and $v$ is any place of $K(t)$ above $p$,
then $|\legendre|_v=|\legendre-1|_v=|j|_v = |j-1728|_v = 1$ and $p$ does not divide
the denominator of the abscissa from the assertion.
Any prime $p$ in 
\begin{equation*}
  \cP_1 = \left\{ p\text{ is a prime}:p\ge C \text{ and }
\left(\frac{\Delta_K}{p}\right)=-1\right\}.
\end{equation*}
 satisfies the hypothesis of Lemma
\ref{lem:congruence}.
Therefore, $p$ lies in
\begin{equation*}
  \cP_2 = \left\{p \text{ is a prime}: p^{2}\equiv 1 \imod n
\right\}.
\end{equation*}
In other words, we have
$\cP_1 \subset \cP_2$.

We will now extract a weak form of the lemma by comparing the
density
of these two sets.
Indeed, by Chebotarev's Density Theorem we have
\begin{equation*}
\# \{p\in \cP_1:\,\, p\le T\} = \frac 12 \frac{T}{\log T}
+ o\left(\frac{T}{\log T}\right).
\end{equation*}
 as $T\rightarrow \infty$.
The same theorem tells us that primes are equidistributed among the
$\varphi(n)$
residues in $(\IZ/n\IZ)^\times$.
Thus
\begin{equation*}
\# \{p\in \cP_2:\,\, p\le T\}
= \frac{\#\cN(n)}{\varphi(n)} \frac{T}{\log T} +
o\left(\frac{T}{\log T}\right)
\end{equation*}
as $T\rightarrow +\infty$. As the density of primes in $\cP_1$ is at
most the density of primes in $\cP_2$ we conclude
\begin{equation*}
\frac 12 \varphi(n) \le \#\cN(n).
\end{equation*}

It remains to show that strict inequality holds. It is this apparently
minor strengthening  that makes the verification below harmless on
current computer hardware.
We will assume $\varphi(n)/2=\#\cN(n)$ and deriving a contradiction.

Suppose there is a prime $p\ge C$ with
$\left(\frac{\Delta_K}{p}\right)=1$ with residue in  $\cN(n)$. If $p'$ is a
further prime with $p' \equiv p \imod {n\Delta_K}$, then
\begin{equation*}
\left(\frac{\Delta_K}{p'}\right)=\left(\frac{\Delta_K}{p}\right)=1
\text{ and }  p'\text{ has residue in } \cN(n)
\end{equation*}
as $\left(\frac{\Delta_K}{\cdot}\right)$ has period $|\Delta_K|$.
But the original $p$ is coprime to $n\Delta_K$ so the set
\begin{equation*}
\cP'_1 = \{p' \text{ is a prime} : p'\equiv p \imod {n\Delta_K} \}
\end{equation*}
has positive density $1/\varphi(n|\Delta_K|)$. It is disjoint from $\cP_1$, so
$\cP_1\cup \cP'_1 \subset \cP_2$  contradicts the fact that $\cP_1$
and $\cP_2$ have equal density $1/2$.

We have established that any sufficiently large prime $p$ with
$\left(\frac{\Delta_K}{p}\right)=1$ satisfies $p^2\not\equiv 1 \imod n$.
On the other hand, there exist infinitely many primes $p$ with
$p\equiv 1 \imod {n\Delta_K}$. Each such prime
satisfies $p\equiv 1 \imod{\Delta_K}$, so
$\left(\frac{\Delta_K}{p}\right)=1$, and
$p\equiv 1 \imod n$, so $p^2\equiv 1 \imod n$. This is
  a contradiction.
\end{proof}

The inequality in the previous lemma imposes a strong restriction on
$n$.
\begin{lemma}
If $n\ge 2$ is an integer with $\varphi(n)/2 < \#\cN(n)$, then
$n\mid 24$.
\end{lemma}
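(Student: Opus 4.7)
The plan is to reduce the inequality to a computation on prime powers, by exploiting the multiplicativity of $\varphi$ and $\#\cN$. By the Chinese Remainder Theorem both functions are multiplicative on coprime arguments, so if $n=\prod_{p\mid n}p^{e_p}$ we may rewrite the hypothesis $\varphi(n)/2<\#\cN(n)$ in the form
\[
\prod_{p\mid n}\rho(p^{e_p})>\frac{1}{2},\qquad\text{where }\rho(p^e):=\frac{\#\cN(p^e)}{\varphi(p^e)}.
\]

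The next step is to tabulate $\rho$ on prime powers. For an odd prime $p$, the group $(\IZ/p^e\IZ)^\times$ is cyclic of even order, so $\#\cN(p^e)=2$ and hence $\rho(p^e)=2/(p^{e-1}(p-1))$. For $p=2$ the standard structure of $(\IZ/2^e\IZ)^\times$ gives $\#\cN(2)=1$, $\#\cN(4)=2$, and $\#\cN(2^e)=4$ for all $e\ge 3$; thus $\rho(2)=\rho(4)=\rho(8)=1$ while $\rho(2^e)=2^{3-e}$ for $e\ge 3$. Inspecting these values yields the key dichotomy: for every prime power $p^e$, either $\rho(p^e)=1$ and $p^e\in\{2,4,8,3\}$, or $\rho(p^e)\le 1/2$.

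The conclusion is then immediate. If the product in the displayed inequality exceeds $1/2$, no factor $\rho(p^{e_p})$ can itself be $\le 1/2$, for otherwise the product would be at most $\tfrac{1}{2}\cdot\prod_{q\ne p}1=\tfrac{1}{2}$. Hence every prime power appearing in the factorization of $n$ must lie in $\{2,4,8,3\}$, which forces $n=2^a3^b$ with $a\le 3$ and $b\le 1$, i.e.\ $n\mid 24$.

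There is no serious obstacle here: the argument reduces to CRT together with a short table. The only subtlety worth flagging is the strictness of the inequality, which is precisely what rules out the boundary cases $\rho(5)=\rho(16)=1/2$; were the hypothesis only $\varphi(n)/2\le\#\cN(n)$ we would also have to admit additional values such as $n\in\{5,10,16\}$.
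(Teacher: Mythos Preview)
Your proof is correct and follows essentially the same approach as the paper: both use the Chinese Remainder Theorem to reduce to prime powers, observe that each local factor $\varphi(p^e)/\#\cN(p^e)$ is at least $1$ so must individually be strictly less than $2$, and then compute the structure of $(\IZ/p^e\IZ)^\times$ to see that this forces $p^e\in\{2,4,8,3\}$. Your explicit dichotomy ``$\rho(p^e)=1$ or $\rho(p^e)\le 1/2$'' is a slightly cleaner packaging of the same computation, but the substance is identical.
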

\begin{proof}
By the Chinese Remainder Theorem we
find $\#\cN(nn')= \#\cN(n)\#\cN(n')$ if $n$ and $n'$ are coprime
integers.
We factor $n=\ell_1^{e_1}\cdots \ell_g^{e_g}$ into pairwise distinct
primes $\ell_i$ with exponents
$e_i\ge 1$ and find
\begin{equation*}
\prod_{i=1}^g \frac{\ell_i^{e_i-1}(\ell_i-1)}{\#\cN(\ell_i^{e_i})}=
\frac{\varphi(n)}{\#\cN(n)} < 2.
\end{equation*}
Each factor on the left is at least $1$ since $\cN(\ell_i^{e_i})$ is a subset
of $(\IZ/\ell_i^{e_i}\IZ)^\times$. So if $\ell=\ell_i$ and
$e=e_i$ for some $1\le i\le g$, then
\begin{equation*}
\ell^{e-1}(\ell-1) < 2 \#\cN(\ell^e).
\end{equation*}
We now bound $\#\cN(\ell^e)$ from above to find a restriction on
$\ell^e$.

First suppose $\ell \ge 3$. The group $(\IZ/\ell^e\IZ)^\times$ is
cyclic and therefore there are at most two solutions of $a^2=
1$ in $(\IZ/{\ell^e}\IZ)^\times$. Thus $\ell^{e-1}(\ell-1) < 4$ which implies
$\ell=3$. We find $\ell^{e-1}<2$ and hence
$e=1$.

Now suppose $\ell=2$ and $e\ge 2$. 
The group $(\IZ/2^e\IZ)^\times$ is
isomorphic
to $(\IZ/2\IZ)\times (\IZ/2^{e-2}\IZ)$. This leaves us  with at
most $4$ possibilities for $a\in(\IZ/2^e\IZ)^\times$ with $a^2=1$. As in the last paragraph we find
$2^{e-1} < 8$ and thus $e\le 3$.

The two previous paragraphs together imply $n\mid 2^3 \cdot 3 = 24$.
\end{proof}

Note that $\varphi(120)/2 = 16$ which equals $\#\cN(120)$. So the strict
inequality in Lemma \ref{prop:boundN} saves us from
 having to deal with a point of order $120$.

We combine  the conclusion of Lemma \ref{prop:boundN} with
the one of the previous lemma and Lemma \ref{lem:extraauto} to get the following statement. If an elliptic curve in
Legendre form with complex multiplication  and $j$-invariant not among
$\{0,1728\}$ contains a point of finite
order $n\ge 2$ whose abscissa is rational, then $n\mid 24$.
It would be interesting to have an explicit description of all
  $\legendre$ such that $E_\legendre$ has complex
multiplication and contains a point of finite order $>2$
with  rational abscissa.

To prove Theorem \ref{thm:abgeneral} we handle the case where the
abscissa is $2$.

\begin{proof}[Proof of Theorem \ref{thm:abgeneral}]
Say $E_\legendre$ has complex multiplication and $(2,*)$ has finite
order $n\ge 2$.
Observe that $E_2$ has $j$-invariant $1728$ and complex multiplication
by $\IZ[\sqrt{-1}]$, moreover $(2,0)$ has order $2$ on this curve.
This corresponds to $t=2$ and $n=2$.
So let us assume $t\not=2$,
by Lemma \ref{lem:extraauto} we may assume that the $j$-invariant of $E_\legendre$
is not among $\{0,1728\}$.

We have $B_n(2,t) = 0$
for the denominator of $B_n(x,t)$ of abscissa of the
multiplication by $n$ map for some  $n\mid 24$.

Using {\tt pari/gp} we compute the polynomials $B_n$ for $n$ up to
$24$ and substitute $2$ for $x$. We thus obtain polynomials in $t$,
one for each $n$, with rational coefficients. We proceed by factoring
these polynomials over the rationals. Thus we obtain potential minimal
polynomials with integer coefficients of candidate legendre parameters. We will now eliminate
most of these polynomials using classical properties of singular
moduli.

The $j$-invariant of $E_{\legendre}$ is an algebraic integer.
If $v$ is a finite place $\IQ(\legendre)$ with $v\nmid 2$, then using
the ultrametric triangle inequality together with (\ref{eq:jlambda}) we get
$|\legendre|_v=1$. As $\legendre-1$  determines the
same elliptic curve as $E_\legendre$ up to isomorphism we also
get $|\legendre-1|_v=1$. Thus we  eliminate all minimal polynomials
constructed before where the leading or constant coefficient is not a
power of $2$.

Next we take those polynomials that survive and compute their
resultant with $j \legendre ^2(\legendre-1)^2 - 2^8
(\legendre^2-\legendre+1)^3$ taken as a polynomial in $\legendre$ and
coefficients in $\IZ[j]$.
We thus eliminate $\legendre$ and obtain candidate minimal polynomials
for  the $j$-invariant after factorizing in the polynomial ring over
the integers. From these factors we
remove those that are non-monic. So the $j$-invariant $j$ of
$E_\legendre$ is a root of one of the irreducible polynomials
\begin{alignat}1
\nonumber
&J - 1728, \\
\nonumber
&J - 54000,  \\
\label{eq:finalfour3}
&J^2 - 1230272J + 1783774976
\end{alignat}
where we have omitted  a fourth irreducible polynomial (of degree $16$ which has a
coefficient greater than $10^{73}$)  whose
 reduction  modulo $5$ splits into distinct irreducible factors over $\IF_5$ as follows
\begin{equation*}
(J^2+3J+4)(J^3+4J^2+4J+2)(J^{11}+3J^{10}+2J^9+3J^8+2J^7+3J^6+J^4+3J^3+2J^2+4J+3).
\end{equation*}
 If $j$ is a root of this fourth polynomial, then $5$ splits into a product
of three prime ideals in the ring of integers in $\IQ(j)$. The residue
degrees are $2,3,$ and $11$. The
extension  $K(j)/\IQ$ is Galois by 
 Lemma 9.3 and Theorem 11.1 \cite{Cox}. Thus 
 the quotient of two residue degrees in $\IQ(j)$ above $5$
is $1/2,1,$ or $2$.
So we can exclude this fourth polynomial.

We have already treated the case $j=1728$ at the beginning of this
proof.
To eliminate the  two remaining
 polynomials  we will proceed as follows.

In fact, $54000$ is the $j$-invariant of the elliptic curve
$E_\legendre$ with
\begin{equation*}
(\legendre^2-16\legendre+16)
(\legendre^2+14\legendre+1)
(16\legendre^2-16\legendre+1)
=0.
\end{equation*}
One checks readily that the point $(2,*)$ has finite order $6$ if
$\legendre^2-16\legendre+16=0$,
this is consistent with the statement of our theorem .

If $\legendre^2+14\legendre + 1=0$ then we claim that $(2,*)$ does not
have finite order. Indeed, in this case
$\legendre = -7 \pm 4\sqrt{3}$ and we compute
\begin{equation*}
[2](2,*) = \left(\frac{155}{88}\mp \frac{29}{66}\sqrt{3},*\right).
\end{equation*}
The abscissa of $[2](2,*)$ is not integral above $2$ and above
$3$. So $[2](2,*)$ has infinite order.

If $16\legendre^2-16\legendre+1=0$, then we can argument similarly, in
this case $\legendre=1/2\pm \sqrt{3}/4$ and
$[2](2,*)= (185/176\pm 31 \sqrt{3}/1056,*)$ is not integral at a place
above $11$
and above $3$ and therefore of infinite order.

We complete the proof by ruling out that a root $j$ of
 (\ref{eq:finalfour3})
is the $j$-invariant of an elliptic curve with complex multiplication.
Let us consider the elliptic curve
$E$ given by
$y^2+xy = x^3 - 36x /(j-1728) - 1/( j-1728)$; its $j$-invariant is
 just $j$.
 The polynomial (\ref{eq:finalfour3})
  splits in
$\IQ(\sqrt{5})$ and has a root modulo $11$ represented by $2$.
The curve $E$ has good reduction at the place of $\IQ(\sqrt 5)$ above
 $11$ that corresponds to $2$.
The reduced curve is defined over  $\IF_{11}$ and the trace of its Frobenius
is $4$. So the reduced curve is ordinary. Its  endomorphismus algebra
is the imaginary quadratic field $\IQ(\sqrt{-7})$
as $4^2-4\cdot 11=-28$.
We repeat the same compution with $p=19$;
 modulo $19$ we find that (\ref{eq:finalfour3}) has two distinct roots, one is represented by
  $-2$. This time the reduced elliptic
curve over $\IF_{19}$
has trace of Frobenius $-4$.
 Its endormorphism algebra is $\IQ(\sqrt{-15})$.
Recall that reducing an elliptic curve induces an injection of the
 corresponding
 endomorphism rings. As $\IQ(\sqrt{-7})\cap\IQ(\sqrt{-15})=\IQ$ we
conclude that any root of (\ref{eq:finalfour3})  determines an elliptic curve
without complex multiplication.
\end{proof}


\section{$E_t$ has CM and $t$ is a root of unity $\{1,4\}$}
\label{sec:ac0}

In this section we consider the pair $\{ 1,4\}$, and show the following.
\begin{theorem}
\label{thm:ac0}
  Suppose $\legendre\in\IQbar\ssm\{-1,0,1\}$ such that $E_\legendre$ has complex
  multiplication. If $t$ is a root of unity, then $t=-1$ or $t= e^{\pm \pi i/3}$.
Conversely, if $\legendre=-1$ or $t= e^{\pm \pi i/3}$, then $E_\legendre$ has complex
multiplication.
\end{theorem}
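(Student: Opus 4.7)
The plan is to combine an explicit real formula for $j(E_\legendre)$ on the unit circle with the effective André-Oort bound of Paulin cited in the introduction, and to conclude with a finite case analysis. The converse direction is immediate: $E_{-1}$ has $j$-invariant $1728$ and $E_{e^{\pm \pi i/3}}$ has $j$-invariant $0$, both singular moduli. For the forward implication, suppose $\legendre = e^{2\pi i k/n}$ is a primitive $n$-th root of unity with $n\geq 3$, $\legendre \neq e^{\pm \pi i/3}$, $\gcd(k,n)=1$, and that $E_\legendre$ has CM; we derive a contradiction. Setting $u = (\legendre+\legendre^{-1})/2 = \cos(2\pi k/n)$, the identities $\legendre^2-\legendre+1 = \legendre(2u-1)$ and $(1-\legendre)^2 = -2(1-u)\legendre$ substituted into (\ref{eq:jlambda}) give
\begin{equation*}
  j(E_\legendre) = -128\,\frac{(2u-1)^3}{1-u}.
\end{equation*}
In particular $j(E_\legendre)\in\IR$ lies in the maximal real subfield $\IQ(\zeta_n+\zeta_n^{-1})$ of $\IQ(\zeta_n)$, and $\IQ(j(E_\legendre))/\IQ$ is abelian; this is the key structural input.

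As a first pass, when $n=p^k$ is an odd prime power, the prime element $\varpi = 1-\zeta_n$ of $\IZ[\zeta_n]$ lies above $p$, while $256$ is a unit at $\varpi$ and $\legendre^2-\legendre+1\equiv 1 \imod \varpi$. Hence $v_\varpi(j(E_\legendre))=-2<0$, so $j(E_\legendre)$ is not an algebraic integer, let alone a singular modulus; this rules out every odd prime power at once. Paulin's bound then leaves the remaining $n\leq 2346$: the powers of $2$ and the integers with at least two distinct prime divisors.

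The main obstacle is to handle these remaining cases. Two observations sharpen the work. First, by classical class field theory, since $\IQ(j(E_\legendre))/\IQ$ is abelian, the class group of the associated imaginary quadratic order has exponent at most $2$, so its discriminant $D$ is idoneal; the $65$ known idoneal discriminants all satisfy $|D|\leq 5460$ (any further one is ruled out under GRH). Second, for each real singular modulus $J$ of such a $D$, the equation $J = -128(2u-1)^3/(1-u)$ reduces to the cubic
\begin{equation*}
  1024 u^3 - 1536 u^2 + (768 - J)u + (J - 128) = 0,
\end{equation*}
whose roots must be tested against the values $\cos(2\pi k/n)$ for $n\leq 2346$ via Niven-type criteria or direct computation in the appropriate real cyclotomic field. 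After this check no candidate survives, contradicting the assumption that $E_\legendre$ has CM.
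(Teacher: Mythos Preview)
Your approach has a genuine gap at the idoneal-discriminant step, and it is precisely the ineffectivity trap the paper takes pains to avoid. You correctly deduce that $\IQ(j)/\IQ$ is abelian and hence (via the semidirect product description of ${\rm Gal}(K(j)/\IQ)$) that the ring class group has exponent dividing $2$. But then you propose to enumerate the $65$ known idoneal discriminants and test each associated real singular modulus against your cubic in $u$. As you yourself note parenthetically, the completeness of that list is only known under GRH; Weinberger's result allows one further exceptional discriminant unconditionally, and you have no way to test it since you do not know what it is. So as written your argument is conditional, whereas the theorem is meant to be effective and unconditional.

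The paper sidesteps this entirely. Rather than bounding the discriminant $D$, it bounds the order $m$ of $t$ directly: since $K(t)/K$ sits inside an abelian extension of $\IQ$ and ${\rm Gal}(K(j)/K)$ has exponent dividing $2$, one gets $\ex{(\IZ/m\IZ)^\times}\mid 4$, which forces $m\mid 240$. This reduces everything to the twenty divisors of $240$ with no appeal to Paulin or to any class-group classification. The odd prime powers $3,5$ fall to your integrality argument, $m=4$ gives $j=128$ which is not singular, and each remaining $m$ is killed by an elegant local trick: pick a prime $p\equiv 1\pmod m$, find two residues $t_1,t_2\in\IF_p$ of order $m$ whose associated Legendre curves over $\IF_p$ are ordinary with \emph{distinct} CM fields; since ${\rm End}(E_t)$ injects into both reductions, $E_t$ cannot have CM. A short table dispatches all thirteen remaining values of $m$.

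Two further remarks. Your cubic reduction and the formula $j=-128(2u-1)^3/(1-u)$ are correct and appear in the paper in another context. And your closing line ``after this check no candidate survives'' defers a computation that, even modulo the GRH issue, is substantially larger than what the paper actually carries out.
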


The second statement is easy. Indeed,
the elliptic curve $E_{-1}$ has complex multiplication by
the Gaussian integers. If $t$ has order $6$, then $E_t$ has $j$-invariant
$0$ by (\ref{eq:jlambda}) and thus has complex multiplication by the
ring of Eisenstein integers.

So let us assume that the
order $m$ of $t$  is not in $\{1,2,6\}$. We suppose that $E_t$ has
complex multiplication and eventually derive a contradiction.
The endomorphism ring of $E_t$ is
 an order in an imaginary quadratic number field $K$.
By (\ref{eq:jlambda}), the $j$-invariant of $E_t$ is $j = J(t) \in \IQbar$ where $J$ is the
 rational function
$J(x) =   2^8 (x^2-x+1)^3x^{-2} (1-x)^{-2}$.


\begin{lemma}
  The function $f:(0,1)\rightarrow\IC$ given by
$f(\theta) = J(e^{2\pi i\theta})$
is real-valued, satisfies $f(\theta)=f(1-\theta)$, and
the  fiber $f^{-1}(f(\theta))$ contains $2$ elements if $\theta\not=1/2$.
\end{lemma}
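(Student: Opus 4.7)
The plan is to prove the three assertions in turn, using the classical $S_3$-invariance of $J$ generated by the substitutions $x\mapsto 1/x$ and $x\mapsto 1-x$. A direct substitution yields $J(1/x)=J(x)$. For $t=e^{2\pi i\theta}$ on the unit circle, $\overline{t}=1/t$, so since $J$ has rational coefficients,
\[
\overline{f(\theta)}=\overline{J(t)}=J(\overline t)=J(1/t)=J(t)=f(\theta),
\]
which shows $f$ is real-valued. The identity $f(\theta)=f(1-\theta)$ is then immediate from $e^{2\pi i(1-\theta)}=1/t$ together with the same symmetry.

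For the fiber count I would use that $J$ is a rational function of degree $6$: cleared of denominators, the equation $J(X)=J(t)$ becomes a polynomial in $X$ of degree $6$, whose six roots (counted with multiplicity) are exactly the $S_3$-orbit
\[
\{\,t,\;1/t,\;1-t,\;1/(1-t),\;(t-1)/t,\;t/(t-1)\,\}.
\]
Hence, if $f(\theta')=f(\theta)$, then $t'=e^{2\pi i\theta'}$ lies in this orbit, and the task reduces to deciding which orbit elements have absolute value $1$.

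The first two always do. The remaining four have absolute value $|1-t|$ or $|1-t|^{-1}$, so they lie on the unit circle precisely when $|1-t|=1$. If $|1-t|\neq 1$, then only $t$ and $1/t=e^{2\pi i(1-\theta)}$ are on the circle, yielding the two fiber elements $\theta$ and $1-\theta$, which are distinct because $\theta\neq 1/2$. If $|1-t|=1$, then combined with $|t|=1$ this forces $\real{t}=1/2$, so $t^2-t+1=0$; this implies $1-t=1/t$, and a short calculation shows that the entire orbit collapses to $\{t,1/t\}$. Either way the fiber has exactly two elements.

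The main obstacle is really only the bookkeeping in the degenerate case $|1-t|=1$, where one must verify that the orbit collapses rather than producing extra fiber points. Everything else is a routine consequence of the two Möbius symmetries of $J$.
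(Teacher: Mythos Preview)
Your proof is correct and follows essentially the same approach as the paper: both use $J(x)=J(1/x)$ for the first two claims and the explicit $S_3$-orbit of $t$ under $J$ to determine which preimages lie on the unit circle, treating the degenerate case $|1-t|=1$ (i.e.\ $t=e^{\pm\pi i/3}$) separately. Your version is slightly more explicit in noting that the four nontrivial orbit elements have modulus $|1-t|^{\pm 1}$ and in verifying the orbit collapse when $t^2-t+1=0$.
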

\begin{proof}
  The first $2$ claims follow from
$\overline{f(\theta)} = J(\overline{e^{2\pi i \theta}}) = J(e^{-2\pi i \theta}) = J(e^{2\pi i \theta})=f(\theta)$ where we used
 $J(x)=J(x^{-1})$.
It is well-known that
if  $x\in \IC\ssm\{0,1\}$, then the fiber of $J$ through $x$ is
$\{x,1/x,1-x,1/(1-x),x/(x-1),(x-1)/x\}$.
Say $x$ lies on the unit circle and $1-x$ does not. Then the fiber of
$J$ restricted to the unit circle containing $x$ contains only $x$ and $1/x$. These are distinct if
$x\not=\pm 1$.
If $x$ and $1-x$ both lie on the unit circle, then $x=e^{\pm \pi i/3}$.
Here  too the fiber of $J$ containing $x$ consists only of $x$ and $1/x$.
\end{proof}

Each conjugate of $j$ over $\IQ$ is of the form
$f(k/m)$ for some integer $1\le k\le m-1$  that is coprime to $m$.
So $j$ is totally real by the previous lemma.
Since $m\not=2k$ we also find that $j$ has half as many conjugates as
$t$. Thus $[\IQ(j):\IQ]= [\IQ(t):\IQ]/2$ and so
$[\IQ(t):\IQ(j)]=2$.

\begin{lemma}
  The order $m$ divides $240$.
\end{lemma}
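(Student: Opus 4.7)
The plan is to deduce from $\IQ(j)\subset\IQ(\zeta_m)$ that the endomorphism order of $E_\legendre$ has $2$-elementary class group, and then read off $m\mid 240$ from a standard exponent bound on $(\IZ/m\IZ)^\times$.

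First, because $\IQ(j)$ sits inside a cyclotomic field, it is abelian over $\IQ$, hence Galois. The action of $(\IZ/m\IZ)^\times=\operatorname{Gal}(\IQ(\zeta_m)/\IQ)$ is given by $\sigma_k(j)=J(\legendre^k)$, and the previous lemma identifies the stabilizer of $j$ as precisely $\{\pm 1\}$, so
\[
\operatorname{Gal}(\IQ(j)/\IQ)\;\cong\;(\IZ/m\IZ)^\times/\{\pm 1\}.
\]

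Next I would invoke the CM picture (as in \cite{Cox}): with $\mathcal O=\operatorname{End}(E_\legendre)$, the field $K(j)$ is the ring class field of $\mathcal O$, so $\operatorname{Gal}(K(j)/K)\cong\operatorname{Cl}(\mathcal O)$, and $\operatorname{Gal}(K(j)/\IQ)$ is the generalized dihedral extension in which complex conjugation acts on $\operatorname{Cl}(\mathcal O)$ by inversion. Since $j$ is real while $K$ is imaginary quadratic, $K\not\subset\IQ(j)$ and therefore $[\IQ(j):\IQ]=h(\mathcal O)$. A short check with the dihedral multiplication law shows that $\IQ(j)/\IQ$ is Galois exactly when the inversion action on $\operatorname{Cl}(\mathcal O)$ is trivial, i.e.\ when $\operatorname{Cl}(\mathcal O)$ is $2$-elementary; in that case the Galois group is $\operatorname{Cl}(\mathcal O)$ itself. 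Combining with the first step,
\[
(\IZ/m\IZ)^\times/\{\pm 1\}\;\cong\;\operatorname{Cl}(\mathcal O)
\]
is an elementary abelian $2$-group, so $a^2\equiv \pm 1\imod m$ and hence $a^4\equiv 1\imod m$ for every $a$ coprime to $m$; equivalently Carmichael's function satisfies $\lambda(m)\mid 4$.

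A brief arithmetic check now yields $m\mid 240$: for odd primes $p$, $\lambda(p^k)=p^{k-1}(p-1)\mid 4$ forces $k=1$ and $p\in\{3,5\}$; for $p=2$, the values $\lambda(2^k)=1,2,2,4,8,\ldots$ for $k=1,2,3,4,5,\ldots$ force $k\le 4$. Therefore $m\mid 2^4\cdot 3\cdot 5=240$.

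The main obstacle is the middle step: one must work carefully with the possibly non-maximal order $\mathcal O$, using ring class fields and form class groups rather than just the Hilbert class field, and then convert the dichotomy ``$\IQ(j)/\IQ$ Galois vs.\ purely dihedral'' into the required $2$-elementariness of $\operatorname{Cl}(\mathcal O)$ via the inversion action of complex conjugation.
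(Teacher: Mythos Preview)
Your proposal is correct and rests on the same key ingredient as the paper: the generalized dihedral structure $\operatorname{Gal}(K(j)/\IQ)\cong\operatorname{Cl}(\mathcal O)\rtimes\IZ/2\IZ$ (with the nontrivial element acting by inversion) combined with the fact that $j$ lies in the abelian extension $\IQ(\zeta_m)$, leading to $\operatorname{ex}((\IZ/m\IZ)^\times)\mid 4$ and hence $m\mid 2^4\cdot 3\cdot 5$.

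Your organization is in fact a bit more streamlined than the paper's. The paper splits into two cases according to whether $K\subset\IQ(t)$ or not, in each case arguing that $K(j)/\IQ$ is abelian, then that $\operatorname{Gal}(K(j)/K)$ has exponent $\le 2$, and finally comparing indices $[K(t):K(j)]$ to transfer this to $(\IZ/m\IZ)^\times$. You bypass the case split entirely by working with $\IQ(j)$ instead of $K(j)$: since $\IQ(j)\subset\IQ(\zeta_m)$ is automatically Galois, the subgroup $\langle c\rangle$ fixing $j$ (complex conjugation, as $j$ is totally real) must be normal in the dihedral group, and the conjugation computation $(a,0)(0,\tau)(-a,0)=(2a,\tau)$ forces $2a=0$ for all $a\in\operatorname{Cl}(\mathcal O)$. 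You then identify $\operatorname{Gal}(\IQ(j)/\IQ)$ simultaneously with $(\IZ/m\IZ)^\times/\{\pm 1\}$ (cyclotomic side, via the previous lemma) and with $\operatorname{Cl}(\mathcal O)$ (CM side), reading off the exponent bound directly. The paper's case split is not needed because you never have to locate $K$ inside the tower.

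Your caveat about non-maximal orders is well placed; the paper handles it in the same way, by citing the ring class field theory in \cite{Cox} (Lemma 9.3 and Theorem 11.1), which covers arbitrary orders.
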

\begin{proof}
As we have seen in the previous section,  the extension $K(j)/\IQ$ is
Galois.
But
 Lemma 9.3 and Theorem 11.1 \cite{Cox} even tell us that its Galois
 group is
 isomorphic to ${\rm Gal}(K(j)/K)\rtimes \IZ/2\IZ$
where the non-trivial element in $\IZ/2\IZ$ acts by inversion on
${\rm Gal}(K(j)/K)$.

We split-up into two cases.

  In the first case we assume that $K\subset \IQ(t)$. Then $K(t) / \IQ$ is an
  abelian extension  whose  Galois group is isomorphic to the unit group
  $(\IZ/m\IZ)^\times$.
In particular, $K(j)/\IQ$ is abelian. The action of $\IZ/2\IZ$
  described above  implies that
$\ex{{\rm Gal}(K(j)/K)}$ divides $2$
where
$\ex{G}$ denotes the exponent of an abelian group $G$.

It seems that we are dangerously close to the notorious and unsolved problem of
explicitly
determining imaginary quadratic number fields whose class group have
exponent $2$.
Of such, 65 are known classically.  Weinberger \cite{Weinberger} proved that
there is at most additional example. Luckily, we can exploit the
structure of
${\rm Gal}(\IQ(t)/\IQ)$ to bypass these issues of ineffectivity.

We recall $[K(t):\IQ(j)]=[\IQ(t):\IQ(j)]=2$ and  $[K(j):\IQ(j)]=2$ since
$j$ is totally real. So $K(j)=K(t)$ and therefore
 $\ex{{\rm Gal}(K(t)/K)}$ divides $2$.
The Galois group of $K(t)/K$ is isomorphic to an  index $2$
subgroup
of $(\IZ/m\IZ)^\times$. Hence $\ex{(\IZ/m\IZ)^\times}$ divides 4.

The second case is when $K\not\subset \IQ(t)$, but the argument is similar. By Galois Theory
we find that $K(t)/\IQ$ is Galois with
group ${\rm Gal}(\IQ(t)/\IQ) \times {\rm Gal}(K/\IQ)\cong
(\IZ/m\IZ)^\times\times\IZ/2\IZ$.
Thus ${\rm Gal}(K(t)/K)$ is isomorphic to $(\IZ/m\IZ)^\times$.
As in the first case, the extension $K(j)/\IQ$ is abelian and we
find that ${\rm Gal}(K(j)/K)$ has exponent dividing $2$.
We recall that $\IQ(t)/\IQ(j)$ is quadratic, and  in particular
Galois. Thus $K(t)/K(j)$ has the same degree as
$\IQ(t)/\IQ(t)\cap K(j)$, which is at most $2$.
Again we find that
 $\ex{{\rm Gal}(K(t)/K)}=\ex{(\IZ/m\IZ)^\times}$ divides $4$.

In both cases we have
\begin{equation}
\label{eq:exp24}
 \ex{(\IZ/m\IZ)^\times} \mid 4
\end{equation}
which implies  the assertion as follows.
Let us abbreviate $e\ge 0$ for the largest
power of $2$ dividing $m$ and
write
$m=2^e \ell_1^{e_1} \cdots \ell_g^{e_g}$ with
$\ell_1,\ldots,\ell_g$ odd, distinct
prime divisors of $m$. By the Chinese Remainder Theorem we have
\begin{equation*}
  (\IZ/m\IZ)^\times = (\IZ/2^e\IZ)^\times \times
\prod_{i} (\IZ/\ell_i^{e_i}\IZ)^\times.
\end{equation*}
As in Section \ref{sec:ab_general}, each unit group $(\IZ/\ell_i^{e_i}\IZ)^\times$ is cycle of order
$\ell_i^{e_i-1}(\ell_i-1)$ since each $\ell_i$ is odd. The remaining factor satisfies
\begin{equation*}
  (\IZ/2^e\IZ)^\times \cong
\left\{
\begin{array}{ll}
  0 &: \text{if $e\le 1$,}\\
 \IZ/2\IZ\times \IZ/2^{e-2}\IZ &: \text{if $e\ge 2$.}
\end{array}\right.
\end{equation*}

Now we use (\ref{eq:exp24}) to restrict $m$ as follows. For any $i$ we
have $(\ell_i-1) \mid 4$.
So the only possible odd prime divisors of $m$ are
$3$ and $5$. Their respective squares cannot divide $m$.
Finally, we must have $e-2 \le 2$. So $e\le
4$ which implies $m\mid 2^4\cdot 3\cdot 5=240$.
\end{proof}

\begin{proof}[Proof of Theorem \ref{thm:ac0}]
The divisors of $240$ are
\begin{equation}
\label{eq:mlist}
1, 2, 3, 4, 5, 6, 8, 10, 12, 15, 16, 20, 24, 30, 40, 48, 60, 80, 120, 240.
\end{equation}


Let us suppose that $m$ is a power of an odd prime $p$.
Then $|1-t|_v < 1$ for any place $v$ of $\IQ(t)$ above $p$.
From this we easily deduce  $|j|_p > 1$ using (\ref{eq:jlambda}). This
contradicts  the well-known
fact that $j$ is an algebraic integer. 
This argument eliminates $m=3$ and $m=5$ but fails to cover other values.

What happens if $m=4$? Then $t^2=-1$ and $J(t) = 2^7$. Again, this is
not a singular moduli. 

We will now  eliminate $m=8$ using a method that  works for the
other divisors as well.

Suppose $t$ has precise order $8$.
 We start out by picking the auxiliary prime $p=17$. Since $p\equiv 1
 \imod 8$ it splits completely in the cyclotomic field $\IQ(t)$.
The integers $2$ and $8$ represent elements $\overline 2$ and
 $t_2=\overline 8$ in $\IF_{17}$
whose multiplicative order are both $8$. So there are two places
 $v_{1,2}\mid 17$ such that $t$ modulo $v_{1}$ is $\overline 2$
and $t$ modulo $v_2$ is $\overline 8$.
We  use these residues to construct two possible reductions
of $E_t$ in characteristic $17$ as Legendre elliptic curves.
The following table lists
the Legendre equations, the trace of Frobenius, and
the discriminant of the imaginary quadratic number field generated by
a root of the said characteristic polynomial.
\renewcommand{\arraystretch}{1.2}
\begin{center}
  \begin{tabular}{c|c|c}
Legendre curve over $\IF_{17}$ & trace of Frobenius & field
discriminant \\
\hline
    $y^2=x(x-\overline 1)(x-\overline {2})$ & $2$ & $-4$ \\
    $y^2=x(x-\overline 1)(x-\overline {8})$ & $-6$ & $-8$\\
 \end{tabular}
\end{center}
Observe that none of the reduced curves is supersingular as the trace
of Frobenius is never divisible by $17$. Thus the endomorphism rings of the
reduced curves are  orders in an imaginary quadratic field
whose discriminant is given by the  column on the right.

The endomorphism ring of $E_t$ injects into the endomorphism ring of
any of its reductions. As we are assuming that $E_t$ has complex
multiplication,  the table above leads to a contradiction.

For each remaining $m$ from (\ref{eq:mlist}) we provide an
auxiliary prime $p$
satisfying $p\equiv 1  \imod m$, two elements $t_1,t_2\in\IF_p$ of
multiplicative order $m$
that serve as the Legendre parameter for ordinary elliptic curves,
the trace of Frobenius $a_{1,2}$ for each reduction, and the discriminants of
corresponding endomorphism algebras $\Delta_{1,2}$.

\renewcommand{\arraystretch}{1.2}
\begin{longtable}{c|c|c|c|c|c|c|c}
$m$ & $p$ & $t_1$ & $t_2$ & $a_1$ & $a_2$ & $\Delta_1$ & $\Delta_2$ \\
\hline
\endfirsthead
$m$ & $p$ & $t_1$ & $t_2$ & $a_1$ & $a_2$ & $\Delta_1$ & $\Delta_2$ \\
\hline
\endhead
10  &  11  &  $\overline 2$  &  $\overline{7}$  &  $0$  &  $-4$  &  $-11$  &  $-7$\\
12  &  13  &  $\overline 2$  &  $\overline{6}$  &  $6$  &  $2$   &  $-4$   &  $-3$\\
15  &  31  &  $\overline 7$  &  $\overline{14}$ &  $8$  &  $0$   &  $-15$  &  $-31$\\
16  &  17  &  $\overline 3$  &  $\overline{5}$  &  $-6$ &  $2$   &  $-8$   &  $-4$\\
20  &  41  &  $\overline 2$  &  $\overline{5}$  &  $10$ &  $-6$  &  $-4$   &  $-8$\\
24  &  73  &  $\overline 7$  &  $\overline{17}$ &  $2$  &  $-6$  &  $-8$   &  $-4$\\
30  &  31  &  $\overline 3$  &  $\overline{12}$ &  $-4$ &  $0 $  &  $-3$   &  $-31$\\
40  &  41  &  $\overline 6$  &  $\overline{11}$ &  $2$  &  $-6$  &  $-40$  &  $-8$\\
48  &  97  &  $\overline 2$  &  $\overline{3}$  &  $18$ &  $-14$ &  $-4$   &  $-3$\\
60  &  61  &  $\overline 2$  &  $\overline{6}$  &  $-10$&  $-2$  &  $-4$   &  $-15$\\
80  &  241 &  $\overline {17}$ &$\overline{21}$ &  $18$ &  $-22$ &  $-40$  &  $-120$\\
120 &  241 &  $\overline 3 $ &  $\overline{12}$ &  $-14$&  $-22$ &  $-3$   &  $-120$\\
240 &  241 &  $\overline 7$  &  $\overline{13}$ &  $-30$&  $26$  &  $-4$   &  $-8$
%
\end{longtable}

For given $m$ in the table, the two listed field discriminants
are different. By the same argument as above this means that $E_t$ does not have complex multiplication
if $t$ has order $m$.
\end{proof}


\section{$(2,*)$ is torsion and $E_{-t}$ has CM $\{2,3\}$}
\label{sec:bg}
We consider the pair $\{2,3\}$ and prove the following.
\begin{theorem}
\label{thm:bg}
  Suppose $\legendre\in\IQbar\ssm\{0,1\}$ such that $E_{-\legendre}$ has complex
  multiplication.
If  $(2,*) \in E_\legendre(\IQbar)$
has finite order $n\ge 2$, then $n\le 10^{10^{19}}$ and the discriminant of the ring of
endomorphisms of $E_{-\legendre}$ is strictly less than $2\cdot 10^{32}$ in modulus. 
\end{theorem}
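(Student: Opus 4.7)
The plan is to use the method of height comparison. Assume $(2,\ast) \in E_\legendre(\IQbar)$ has order $n \ge 2$ and $E_{-\legendre}$ has complex multiplication by an order of discriminant $D$. I will play an upper bound on $h(j(-\legendre))$, coming from the torsion condition, against a lower bound coming from the CM condition; the resulting restriction on $|D|$ will then constrain $n$.

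For the upper bound, apply an effective version of Silverman's specialization theorem to the Legendre family $E_\legendre$ with the section $P = (2, \sqrt{4-2\legendre})$. By Masser--Zannier \cite{MZ:torsionanomalousAJ}, $P$ is not generically torsion, so $\hat h(P) > 0$ on the generic fiber. An effective specialization estimate
\[
  \hat h_{E_\legendre}(P_\legendre) = \hat h(P)\,h(\legendre) + O\!\bigl(\sqrt{h(\legendre)}+1\bigr),
\]
with explicit constants, combined with $\hat h_{E_\legendre}(P_\legendre)=0$ from torsion, yields $h(\legendre) \le C_1$ explicitly. Since $j(-\legendre)$ is a rational function of $\legendre$ of degree at most $6$, we obtain $h(j(-\legendre)) \le C_2$ explicitly. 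Making the constants in the specialization estimate truly concrete---in particular the local contributions to $\hat h_{E_\legendre} - h$ at the bad reduction places of the Legendre family---is the main technical task for this step.

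For the lower bound, write $j(-\legendre)=j(\sigma)$ with $\sigma = (b+\sqrt{D})/(2a)$ in the standard fundamental domain satisfying $|b|\le a \le \sqrt{|D|/3}$ and $b^2-4ac = D$. One of the Galois conjugates of $j(-\legendre)$ is the ``cuspidal'' value $j(\sqrt{D}/2)$ or $j((1+\sqrt{D})/2)$, and Lemma \ref{lem:david3} gives that its absolute value is at least $e^{\pi\sqrt{|D|}}-2079$. Since $j(-\legendre)$ is an algebraic integer, its Weil height equals the average over the $h(D)$ conjugates of $\log^+|\cdot|$; estimating each conjugate via $|j(\tau_i)| \le e^{2\pi \imag{\tau_i}} + 2079$ (Lemma \ref{lem:david3} again) and summing over fundamental-domain representatives produces a lower bound of the shape
\[
  h(j(-\legendre)) \ge \tfrac12 \log|D| - C_3,
\]
with explicit $C_3$, in the spirit of the Colmez and Nakkajima--Taguchi asymptotics. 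Matching with $h(j(-\legendre)) \le C_2$ yields $|D| < 2\cdot 10^{32}$.

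The bound on $n$ then comes by observing that once $|D|$ is bounded, the degree $[\IQ(\legendre):\IQ] \le 12\,h(D)$ is explicitly bounded ($\legendre$ has degree at most $6$ over $\IQ(j(-\legendre))$, which has degree $h(D)$ over $\IQ$). An effective bound for torsion on elliptic curves over number fields of bounded degree---via e.g.\ Masser's lower bound for N\'eron--Tate heights of non-torsion points, or the explicit Hindry--Silverman bounds for CM curves---will then produce the stated $n \le 10^{10^{19}}$. The hard part, which drives the concrete value $2\cdot 10^{32}$, is the explicit version of the specialization and height-difference estimates on the Legendre family: cruder constants would yield only a much weaker bound, and the proof will have to track them carefully throughout.
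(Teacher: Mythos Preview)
Your overall height-comparison strategy matches the paper's, but two of your three steps diverge from what the paper actually does, and one of them has a genuine gap.

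\textbf{Upper bound on $h(t)$.} The paper does not invoke Silverman's specialization theorem as a black box. Instead it computes an explicit Zimmer constant on the Legendre model directly from the duplication formula, obtaining $|h(P)-\hat h(P)|\le \tfrac13(5h(t)+\log 58)$, and then computes the abscissa of $4P_t$ as an explicit rational function $A_4(t)/B_4(t)$ of degree $8$ in $t$, giving $h(4P_t)\ge 8h(t)-\log C$ with a concrete $C$. Comparing these two yields $h(t)\le 5.41$. Your specialization-theorem route is morally equivalent but would require computing the generic canonical height $\hat h(P)$ over the function field and making all of Silverman's error terms explicit; the paper's direct computation bypasses this entirely and is what produces the sharp constant driving $|D|<2\cdot 10^{32}$.

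\textbf{Lower bound on the height.} Here you have a real gap. Lemma~\ref{lem:david3} controls a single $|j(\tau)|$ via $e^{2\pi\imag\tau}$; it does \emph{not} give you $h(j)\ge\tfrac12\log|D|-C_3$. Averaging $\log^+|j(\tau_i)|\approx 2\pi\imag{\tau_i}$ over the class group gives roughly $\frac{\pi\sqrt{|D|}}{h(D)}\sum_{[a,b,c]}\frac{1}{a}$, and one cuspidal conjugate alone, divided by $h(D)\asymp\sqrt{|D|}\log|D|$, contributes only $O(1/\log|D|)$, not something growing in $|D|$. The paper instead works with the Faltings height, uses the Colmez/Nakkajima--Taguchi formula to express $h_F(E_{-t})$ in terms of $L'(\chi,1)/L(\chi,1)$, and then---crucially---applies Badzyan's effective lower bound on $\gamma+L'/L(1,\chi)$ to get $h_F(E_{-t})\ge\tfrac{\sqrt5}{20}\log(|\Delta|f^2)-5.93$. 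You cite Colmez and Nakkajima--Taguchi, which give the \emph{formula}, but you are missing the analytic input (Badzyan or an equivalent effective estimate) that turns the formula into an \emph{effective} lower bound. Without it your lower bound step does not go through. The paper then compares with an explicit upper bound $h_F(E_{-t})\le -0.394+\tfrac12 h(t)$ obtained from the local term $|\Delta(\tau)\vartheta(\tau)^{-2}|$, not with $h(j)$.

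\textbf{Bound on $n$.} The paper does not use Masser or Hindry--Silverman torsion bounds. It observes that the abscissas of $P_t,2P_t,\ldots,(n-1)P_t$ give $\ge(n-1)/2$ distinct algebraic numbers in $\IQ(t)$ of height $\le\log 31736$ (via the Zimmer bound again), and then counts such numbers by Loher's explicit Northcott-type estimate, using the class-number bound $d\le 1.1\cdot 10^{18}$. Your proposal via torsion bounds would also work, but would likely give a different (and probably larger) numerical answer.
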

The proof splits into two parts.

In this section we need to work with the absolute logarithmic Weil height $h(x)$
of an algebraic number $x$. We refer to Chapter 1 \cite{BG} for the
definition and basic properties. 

\subsection{Bounding the height of $t$ from above}
\label{sec:bg_part1}

\begin{lemma}\label{res}
 Let $A,B$ be polynomials in $\IZ[X]$ with $\max\{\deg A,\deg B\}=d\geq1$, and suppose there exist $P_0,Q_0,P_\infty,Q_\infty$ in $\IZ[X]$ with degrees at most $d-1$ and $r \neq 0$ in $\IZ$ such that
$$P_0(X)A(X)+Q_0(X)B(X)=r,~~P_\infty(X)A(X)+Q_\infty(X)B(X)=rX^{2d-1}.$$
Then for any algebraic $x$ with $B(x) \neq 0$ we have
$$h\left({A(x) \over B(x)}\right) \geq dh(x)-\log C$$
where
$$C=\max\{L(P_0)+L(Q_0),L(P_\infty)+L(Q_\infty)\}$$
for the lengths.
\end{lemma}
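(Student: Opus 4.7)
The plan is to identify $h(A(x)/B(x))$ with the projective height
\[
h([A(x):B(x)]) = \frac{1}{[K:\IQ]}\sum_v n_v\log M_v,\qquad M_v := \max(|A(x)|_v,|B(x)|_v),
\]
taken over a number field $K\supseteq\IQ(x)$ with $v$ running over the places of $K$ and $n_v=[K_v:\IQ_v]$. The problem then reduces to producing a good lower bound for $M_v$ at each $v$.

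At each place $v$, the two Bezout identities and the triangle inequality yield
\[
|r|_v \leq (|P_0(x)|_v+|Q_0(x)|_v)\,M_v,\qquad |r|_v|x|_v^{2d-1} \leq (|P_\infty(x)|_v+|Q_\infty(x)|_v)\,M_v,
\]
with the sums replaceable by maxima at non-archimedean $v$ via the ultrametric inequality. Since $P_0,Q_0,P_\infty,Q_\infty$ lie in $\IZ[X]$ and have degree at most $d-1$, one has $|P(x)|_v\leq\max(1,|x|_v)^{d-1}$ for non-archimedean $v$ and $|P(x)|_v\leq L(P)\max(1,|x|_v)^{d-1}$ for archimedean $v$. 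Applying the first inequality when $|x|_v\leq 1$ and the second when $|x|_v>1$, both cases combine to give the uniform lower bounds
\[
M_v \geq |r|_v\max(1,|x|_v)^d \quad\text{(non-archimedean $v$)},\qquad M_v \geq |r|_v\max(1,|x|_v)^d/C \quad\text{(archimedean $v$)},
\]
with $C$ as defined in the statement.

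Finally, one sums $n_v/[K:\IQ]$ times the logarithm of these bounds over all $v$: the contributions $\log|r|_v$ vanish by the product formula applied to $r\in\IQ^\times$, the contributions $\log\max(1,|x|_v)^d$ reassemble into $d\,h(x)$, and the archimedean factors $1/C$ produce $-\log C$ since $\sum_{v\text{ arch}}n_v=[K:\IQ]$. This delivers $h(A(x)/B(x))\geq d\,h(x)-\log C$. No genuine obstacle presents itself; the only delicate point is the case split on $|x|_v$, which makes the two Bezout identities complementary and absorbs the auxiliary factor $\max(1,|x|_v)^{d-1}$ to yield a clean bound of the expected order $\max(1,|x|_v)^d$.
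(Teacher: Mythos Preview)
Your proposal is correct and follows essentially the same approach as the paper. The only cosmetic difference is that you perform an explicit case split on $|x|_v\le 1$ versus $|x|_v>1$, whereas the paper takes the maximum of the two Bezout estimates to obtain $|r|_v\max\{1,|x|_v^{2d-1}\}\le C_v\max\{1,|x|_v^{d-1}\}M_v$ (with $C_v=1$ non-archimedean, $C_v=C$ archimedean) and then cancels; both routes yield the same local bound $M_v\ge |r|_v\max\{1,|x|_v^d\}/C_v$ and the same conclusion after applying the product formula.
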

\begin{proof}
 Let $K$ be a number field containing $x$. For any non-archimedean
place $|\cdot|$ on $K$ it is clear that
 \begin{equation}
\label{eq:bg1}
  |rx^{2d-1}|=|P_\infty(x)A(x)+Q_\infty(x)B(x)|\leq\max\{1,|x|^{d-1}\}\max\{|A(x)|,|B(x)|\}.
 \end{equation}
We also have
\begin{equation}
\label{eq:bg2}
 |r|=|P_0(x)A(x)+Q_0(x)B(x)|\leq\max\{1,|x|^{d-1}\}\max\{|A(x)|,|B(x)|\},
\end{equation}
and so
\begin{equation}
\label{eq:bg3}
 |r|\max\{1,|x|^{2d-1}\}\leq\max\{1,|x|^{d-1}\}\max\{|A(x)|,|B(x)|\}
\end{equation}
with the obvious cancellation. But for an archimedean place $|\cdot|$
of $K$ we get
an extra $L(P_\infty)+L(Q_\infty) \leq C$ on the far right of
(\ref{eq:bg1}), and an extra $L(P_0)+L(Q_0) \leq C$ on the far right
of (\ref{eq:bg2}). Thus an extra $C$ on the far right of
(\ref{eq:bg3}). Taking the product over all places and then the
logarithm we get the desired result.
\end{proof}

Note that the upper bound
$$h\left({A(x) \over B(x)}\right) \leq dh(x)+\log \max \{L(A),L(B)\}$$
is practically obvious.


Now considering $3P_t$ and using a Zimmer constant (see below) coming
from the Weierstrass model we already get $h(t)<75$.
But it is more efficient to calculate directly a Zimmer constant on the Legendre model.




This time working over ${\IZ}[t]$ not $\IZ$ we use
$$A(X,t)=X^4-2t X^2+t^2=(X^2-t)^2,~~B(X,t)=4X(X-1)(X-t)$$
(coming from the duplication formula) with
\begin{alignat*}1
P_0(X,t)&=-12X^2+(8t+8)X+4t^2-8t+4,\\
Q_0(X,t)&=3X^3+(t+1)X^2-5t X-2t^2-2t, \\
P_\infty(X,t)&=4t^2(t^2-2t+1)X^3+2t^2(4t^2+4t)X^2-12t^4X,\\
Q_\infty(X,t)&=-2t^3(t+1)X^3-5t^4X^2+(t^5+t^4)X+3t^5.
\end{alignat*}
We have
$$P_0(X,t)A(X,t)+Q_0(X,t)B(X,t)=r,~~P_\infty(X,t)A(X,t)+Q_\infty(X,t)B(X,t)=rX^{7}$$
with $r=4t^2(t-1)^2$.

For non-archimedean we get
\begin{equation}
\label{eq:bg1z}
  |rx^{7}|=|P_\infty(x,t)A(x,t)+Q_\infty(x,t)B(x,t)|\leq\max\{1,|x|^{3}\}\max\{1,|t|^{5}\}\max\{|A(x,t)|,|B(x,t)|\}.
\end{equation}
We also have
\begin{equation}
\label{eq:bg2z}
|r|=|P_0(x,t)A(x,t)+Q_0(x,t)B(x,t)|\leq\max\{1,|x|^{3}\}\max\{1,|t|^{5}\}\max\{|A(x,t)|,|B(x,t)|\},
\end{equation}
and so
\begin{equation}
\label{eq:bg3z}
  |r|\max\{1,|x|^{7}\}\leq\max\{1,|x|^{3}\}\max\{1,|t|^{5}\}\max\{|A(x,t)|,|B(x,t)|\}
\end{equation}
with the obvious cancellation. But for an archimedean place we get an extra $L(P_\infty)+L(Q_\infty) \leq 44+14=58$ on the far right of (\ref{eq:bg1z}), and an extra $L(P_0)+L(Q_0) \leq 44+14=58$ on the far right of (\ref{eq:bg2z}). Thus an extra $58$ on the far right of (\ref{eq:bg3z}). Taking the product over all places and then the logarithm we get
$$h(2P) \geq 4h(P)-5h(t)-\log 58$$
for any $P$ in $E_t(\overline{\bf Q})$, where the height is that of
the abscissa. A much better upper bound holds, and we deduce the
Zimmer-type bound
\begin{equation}
  \label{eq:zimmerbound}
|h(P)-\hat h(P)| \leq {1 \over 3}(5h(t)+\log 58)
\end{equation}
for the corresponding N\'eron-Tate height.

We now use $4P_t$. We find that this is
$A_4(t)/B_4(t)$ with $A_4$ of degree 8 and $B_4$ of degree
7 both in ${\IZ}[X]$. So $d=8$ in Lemma \ref{res}. We find
$P_0,Q_0,P_\infty,Q_\infty$ with $r=2^{33}$ and $C=192475067056128$;
these are surprisingly small because large powers of 2 can be
cancelled (the resultant of $A_4,B_4$ is $-2^{160}$). 
Thus $h(4P_t) \geq 8h(t)-\log C$. On the other this is
at most ${1 \over 3}(5h(t)+\log 58)$. Comparison gives
\begin{equation}
\label{eq:htbound}
  h(t) \leq 5.4070213804731854624.
\end{equation}

Here are the explicit expressions, if it helps at all. First
$$A_4=-X^8+160X^7-7104X^6+57344X^5-206336X^4+401408X^3-442368X^2$$$$+262144X-65536,
$$
$$B_4=288X^7-3648X^6+17408X^5-38912X^4+40960X^3-16384X^2.
$$
Then
$$P_0=9486432X^6-110727168X^5+464270080X^4-827747840X^3+540543488X^2$$$$+524288X+131072,$$
$$Q_0=32939X^7-5237482X^6+228793380X^5-1661849512X^4+5165533824X^3$$$$-8172300544X^2+6555431936X-2157324288,$$
$$P_\infty=8589934592X^7+21474836480X^6-867583393792X^5+3985729650688X^4$$$$-6322191859712X^3+3298534883328X^2,
$$
and finally
$$Q_\infty=-4697620480X^7+137438953472X^6+1340029796352X^5-13726715478016X^4$$$$+42365557407744X^3-62122406969344X^2+45079976738816X-13194139533312.$$


\subsection{Bounding the Faltings height from below}
\label{sec:bg_part2}
Let $\tau$ lie in the upper half-plane 
 and 
$q=e^{2\pi i \tau}$. Then 
\begin{equation*}
  \Delta(\tau) = (2\pi)^{12} q\prod_{n\ge 1} (1-q^n)^{24}
\end{equation*}
defines the discriminant function. It  appears
in the Faltings height of an
elliptic curve $E$  defined over a number field $K$. 
Indeed, Suppose that $E$ has complex multiplication. 
For each embedding  $\sigma:K\rightarrow\IC$ let $\tau_\sigma$ have
positive imaginary part and be the
quotient of a choice of period lattice basis vectors of the
 complex elliptic curve induced by $E$ and $\sigma$.
The stable Faltings height of $E$ is
\begin{equation}
\label{eq:Falthgt}
  \Fh{E} =  -\frac{1}{12 [K:\IQ]}\sum_{\sigma:K\rightarrow\IC}
 \log(|\Delta(\tau_\sigma)| \imag{\tau_\sigma}^6) +\frac 12 \log \pi,
\end{equation}
where $\imag{\cdot}$ denotes the imaginary part of a complex number. 
Each logarithm in (\ref{eq:Falthgt}) is invariant under the action
of ${\rm SL}_2(\IZ)$ by fractional linear transformations on
$\tau_\sigma$. 
So the   local terms are independent of the choice made before.  
The term $(\log \pi)/2$ is part of Deligne's normalization.

For example, the height of the elliptic curve $E$ with CM by the maximal order in
$\IQ(\sqrt{-3})$ is 
\begin{equation*}
  \Fh{E} = -0.7487524855033378279181555201\ldots.
\end{equation*}

Let $f$ be a natural number and $p$ a prime divisor of $f$. 
Suppose that $\chi$ is a quadratic character and
that $n$ is the largest integer with $p^n\mid f$. We 
set
\begin{equation}
\label{def:efp}
  e_f(p) = \frac{1-\chi(p)}{p-\chi(p)}\frac{1-p^{-n}}{1-p^{-1}}.
\end{equation}

We now make Colmez's Theorem and its extension by Nakkajima-Taguchi
explicit. This and an estimate of Badzyan will lead to a lower bound for the height of an
elliptic curve with complex multiplication.

\begin{lemma}
\label{lem:Fhlowerbound}
  Let $E$ be an elliptic curve with complex multiplication by 
an order with discriminant $\Delta f^2 $ where $f\in\IN$ and $\Delta<0$
is a fundamental discriminant.
\begin{enumerate}
\item [(i)] We have
  \begin{equation*}
    \Fh{E} = \frac{1}{4} \log(|\Delta|f^2) +
\frac 12 \frac{L'(\chi,1)}{L(\chi,1)} 
-\frac 12 \left(\sum_{p\mid f} e_f(p)\log p\right)
-\frac{1}{2} (\gamma+\log(2\pi))
  \end{equation*}
where the Kronecker symbol $\chi(\cdot)=\left(\frac{\Delta}{\cdot}\right)$ 
is used in (\ref{def:efp}) and
$\gamma  = 0.5772 \ldots$ is Euler's constant. 
\item[(ii)]
We have the estimates
\begin{alignat*}1
  \Fh{E} &\ge \frac{\sqrt{5}}{20}\log|\Delta|
+\frac 12 \log f 
-3\log(1+\log f)
-\gamma
-\frac{\log(2\pi)}{2} \\
&\ge \frac{\sqrt{5}}{20}\log(|\Delta|f^2) - 5.93.  
\end{alignat*}
\end{enumerate}
\end{lemma}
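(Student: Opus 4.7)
The plan for part (i) is to combine Colmez's theorem for the maximal order case with the Nakkajima-Taguchi extension to orders of arbitrary conductor. Concretely, let $E_0$ be an elliptic curve with complex multiplication by the maximal order $\O_K$ in $K=\IQ(\sqrt{\Delta})$. The Colmez--Chowla--Selberg formula supplies
\[
\Fh{E_0} = \tfrac{1}{4}\log|\Delta| + \tfrac{1}{2}\frac{L'(\chi,1)}{L(\chi,1)} - \tfrac{1}{2}(\gamma + \log(2\pi)),
\]
with $\chi$ the Kronecker symbol attached to $\Delta$. The Nakkajima--Taguchi theorem then says that replacing $\O_K$ by the order of conductor $f$ alters the Faltings height by exactly $\tfrac{1}{2}\log f - \tfrac{1}{2}\sum_{p\mid f}e_f(p)\log p$. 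Adding the two contributions and collapsing $\tfrac{1}{4}\log|\Delta| + \tfrac{1}{2}\log f$ into $\tfrac{1}{4}\log(|\Delta|f^2)$ yields the claimed identity. The only non-routine bookkeeping is checking that the normalisation of the Faltings height and of $L(\chi,s)$ in the cited sources match (\ref{eq:Falthgt}), in particular the Deligne shift $\tfrac{1}{2}\log\pi$.

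For the first estimate in (ii) I would feed (i) into Badzyan's explicit lower bound for $L'(\chi,1)/L(\chi,1)$. The coefficient $\sqrt{5}/20$ in the conclusion is exactly what one obtains after optimising a free parameter in Badzyan's argument and combining with the $\tfrac{1}{4}\log|\Delta|$ already present in (i), via
\[
\tfrac{1}{4}\log|\Delta| + \tfrac{1}{2}\cdot\tfrac{\sqrt{5}-5}{10}\log|\Delta| = \tfrac{\sqrt{5}}{20}\log|\Delta|.
\]
It remains to bound $\sum_{p\mid f} e_f(p)\log p$ from above. From (\ref{def:efp}) one reads off the crude estimate $e_f(p)\le 2/(p-1)$ together with the geometric factor $(1-p^{-n})/(1-p^{-1})$, which never exceeds the number of prime divisors of $f$. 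A Mertens-type summation over $p\mid f$ then yields a bound of the shape $\sum_{p\mid f} e_f(p)\log p \le 6\log(1+\log f) + O(1)$, and the first inequality of (ii) follows after collecting the absolute constants.

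For the second inequality I would simply compare $\tfrac{1}{2}\log f - 3\log(1+\log f)$ with $\tfrac{\sqrt{5}}{10}\log f$. Since $\tfrac{5-\sqrt{5}}{10}>0$ and $\log(1+\log f) = o(\log f)$, one-variable calculus shows that
\[
\tfrac{5-\sqrt{5}}{10}\log f - 3\log(1+\log f)
\]
is bounded below by an absolute negative constant for all $f \ge 1$. The remaining numerical constants from $\gamma$, $\log(2\pi)$ and the secondary terms of Badzyan's estimate are then packaged into the final $-5.93$ by direct numerical verification at the minimising value of $f$.

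The part I expect to be most delicate is not the analytic content but the reconciliation of constants across the three classical inputs: Colmez, Nakkajima--Taguchi, and Badzyan all appear in the literature in slightly different normalisations (of $\Fh{\cdot}$, of the Deligne shift, and of Dirichlet $L$-functions). The explicit identity in (i) and the sharp constant $\sqrt{5}/20$ in (ii) leave no room for a mismatch, so each normalisation has to be tracked precisely. Once the bookkeeping is settled, the Mertens estimate and the one-variable inequality used to pass from the first to the second bound in (ii) are elementary.
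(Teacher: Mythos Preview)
Your strategy matches the paper's: (i) via Colmez plus Nakkajima--Taguchi, the first bound in (ii) via Badzyan's explicit inequality $L'(\chi,1)/L(\chi,1)+\gamma \ge -\tfrac{1}{2}(1-1/\sqrt 5)\log|\Delta|$ together with an upper bound on $\sum_{p\mid f} e_f(p)\log p$, and the second bound by minimising $(\tfrac12-\tfrac{\sqrt5}{10})\log f - 3\log(1+\log f)$ over $f\ge 1$.

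Two places need tightening. First, your remark on the geometric factor is garbled: $(1-p^{-n})/(1-p^{-1})\le p/(p-1)\le 2$, not ``the number of prime divisors of $f$''; what one actually uses is $e_f(p)\le\min\{3/p,\,2/(p-1)\}$. Second, the first inequality in (ii) is stated with the \emph{exact} constant $-\gamma-\tfrac12\log(2\pi)$, so a bound $\sum_{p\mid f} e_f(p)\log p \le 6\log(1+\log f)+O(1)$ is not enough---the $O(1)$ must vanish. The paper obtains the clean inequality by splitting the sum at a parameter $x$, applying the explicit Rosser--Schoenfeld bound $\sum_{p\le x}(\log p)/p < \log x$ to the small primes and $(\log p)/(p-1)\le(\log x)/(x-1)$ to the large ones, then choosing $x=1+2\omega(f)/3$ and invoking $\omega(f)\le(\log f)/\log 2$; only after this optimisation does one land exactly on $6\log(1+\log f)$. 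Finally, there are no ``secondary terms of Badzyan's estimate'' entering the $-5.93$: it is simply $-\gamma-\tfrac12\log(2\pi)$ plus the minimum value (numerically $>-4.431$, attained at $f=\exp((13+3\sqrt5)/2)$) of the one-variable function above.
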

\begin{proof}
  Part (i) is classical for $f=1$ and follows
from Nakkajima-Taguchi's result  \cite{NT} in general.
For a  detailed argument see Lemma 4.1 \cite{habegger:wbhc}.

To prove the second part we use 
Badzyan's Theorem 1 \cite{Badzyan} which implies
\begin{equation*}
  \frac{L'(\chi,1)}{L(\chi,1)} +\gamma \ge  -\frac{1}{2}
\left(1-\frac{\sqrt{5}}{5}\right)\log |\Delta|. 
\end{equation*}
Indeed, the left-hand side is the said author's $\gamma_K$ with 
$K=\IQ(\sqrt\Delta)$. 
 Ihara's paper \cite{IharaKronecker} contains bounds
subject to the Generalized Riemann Hypothesis.

After rearranging and dividing by $2$ we find
\begin{equation*}
\frac 14 \log|\Delta|+  \frac 12 \frac{L'(\chi,1)}{L(\chi,1)} 
\ge -\frac{\gamma}{2}+\frac{\sqrt{5}}{20} \log |\Delta|.
\end{equation*}
So
\begin{equation*}
  \Fh{E} \ge \frac{\sqrt{5}}{20}\log|\Delta| + \frac 12 \log f
-\frac 12 \left(\sum_{p\mid f} e_f(p)\log p\right)
-\gamma -\frac{\log(2\pi)}{2}
\end{equation*}
by part (i). 

Below we will show 
  \begin{equation}
\label{eq:efpbound}
    \sum_{p\mid f} e_f(p)\log p \le 6\log(1+\log f). 
  \end{equation}
This inequality implies the first lower bound in  (ii) and makes
  explicit an estimate of Poonen \cite{Poonen:MRL01}.

To prove (\ref{eq:efpbound}) we may assume $f\ge 2$. 

 If $\chi(p)=0$ or $1$, then 
$e_f(p)\le 1/(p-1)$. Otherwise, 
$e_f(p)\le 2/((p+1)(1-1/p)) = 2/(p-1/p)$
from which we deduce $e_f(p)\le 2/(p-1)$ and $e_f(p)\le 3/p$. 
These two inequalities hold regardless of the value of $\chi(p)$. 

Let $x>1$ be a real number, then 
  \begin{equation*}
    \sum_{p\mid f} e_f(p)\log p \le 3 \sum_{p\le x} \frac{\log p}{p}
+2 \sum_{p\mid f,p> x} \frac{\log p}{p-1}.
  \end{equation*}
Rosser and Schoenfeld's Corollary to Theorem 6 \cite{RosserSchoenfeld62} yields
\begin{equation*}
  \sum_{p\le x} \frac{\log p}{p}< \log x.
\end{equation*}
The map $x\mapsto (\log x)/(x-1)$ is decreasing on $(1,\infty)$, so
$\sum_{p\mid f,p>x}\frac{\log p}{p-1}\le 
(\log x) \omega(f)/ (x-1)$
where $\omega(f)$ denotes the number of distinct prime divisors of $f$.
Combining the bounds for small and large primes yields
  \begin{equation*}
    \sum_{p\mid f} e_f(p)\log p \le 
(\log x)\left(3+ \frac{2\omega(f)}{x-1}\right).
  \end{equation*}
A reasonable choice for $x$ is $1+2\omega(f)/3 > 1$
which leads to 
$6\log(1+2\omega(f)/3)$ as an upper bound.

Inequality  (\ref{eq:efpbound}) now follows from  the
 elementary bound $\omega(f) \le (\log f)/(\log 2)$ which leads to 
 $\log(1+2\omega(f)/3) < \log (1+\log f)$ since $2 < 3\log 2$. 

It remains to show the second inequality in (ii). 
On taking the derivative we observe that 
\begin{equation*}
f\mapsto \left(\frac 12 - \frac{\sqrt 5}{10}\right) \log f -
3\log(1+\log f) 
\end{equation*}
takes its minimum in $[1,+\infty)$ at
$\exp((13+3\sqrt 5)/2)$. 
Thus its minimal value is greater than $-4.431$ and we conclude
\begin{equation*}
\frac 12 \log f - 3\log(1+\log f) \ge \frac{\sqrt{5}}{20} \log(f^2) - 4.431.
\end{equation*}
Part (ii) follows since $-4.431 - \gamma - \log(2\pi)/2 \ge -5.93$. 
\end{proof}


We define
\begin{equation*}
  \vartheta(\tau) = 16 e^{\pi i \tau} \prod_{n\ge 1}
\left(\frac{1+e^{2\pi i n\tau}}{1+e^{2\pi i (n-1/2)\tau}}\right)^8
\end{equation*}
and observe that $\vartheta = 1/(1-\lambda)$ where $\lambda$ is the
the modular function of level $2$ as in Section
18.6 \cite{Lang:elliptic}, cf. Remark 2 in Section 18.4 of
loc.~cit.

\begin{lemma}
\label{lem:localestimate}
If the imaginary part of $\tau$  
is at least $\sqrt{3}/2$, then
  \begin{equation*}
|\Delta(\tau) \vartheta(\tau)^{-2}| \ge 4160174. 
  \end{equation*}
\end{lemma}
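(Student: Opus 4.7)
The plan is to derive the bound from the explicit infinite product formulas for $\Delta$ and $\vartheta$. Writing $q=e^{2\pi i\tau}$ and substituting, the factor of $q$ in $\Delta(\tau)$ cancels the factor $q=(q^{1/2})^2$ arising from $\vartheta(\tau)^2$, and the scalar $(2\pi)^{12}/256=16\pi^{12}$ emerges, giving the identity
\begin{equation*}
\Delta(\tau)\vartheta(\tau)^{-2} = 16\pi^{12} \prod_{n\geq 1}(1-q^n)^{24} \prod_{n\geq 1}\left(\frac{1+q^{n-1/2}}{1+q^n}\right)^{16}.
\end{equation*}

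Next I would estimate each factor in modulus via the reverse triangle inequality: $|1-q^n|\geq 1-|q|^n$, $|1+q^{n-1/2}|\geq 1-|q|^{n-1/2}$, and $|1+q^n|\leq 1+|q|^n$. The hypothesis $\imag{\tau}\geq \sqrt{3}/2$ gives $|q|\leq r:=e^{-\pi\sqrt{3}}\approx 0.00433$, so after collecting,
\begin{equation*}
|\Delta(\tau)\vartheta(\tau)^{-2}| \geq 16\pi^{12} \prod_{n\geq 1}(1-r^n)^{24} \prod_{n\geq 1}\left(\frac{1-r^{n-1/2}}{1+r^n}\right)^{16}.
\end{equation*}

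The final step is numerical evaluation of the right-hand side. Since $r<0.005$, the contribution of the $n\geq 2$ factors is exponentially close to $1$; I would retain the first couple of factors explicitly and bound the tails using, e.g., $\log(1-r^{n-1/2})\geq -r^{n-1/2}/(1-\sqrt r)$ together with summation of the resulting geometric series. The dominant loss comes from $(1-\sqrt r)^{16}\approx 0.337$, followed by $(1-r)^{24}\approx 0.901$ and $(1+r)^{-16}\approx 0.933$; multiplied with the leading constant $16\pi^{12}\approx 1.479\times 10^7$, this produces a value just above $4.16\times 10^6$. The only obstacle is arithmetic bookkeeping: one must retain enough terms and keep the tail bounds tight enough so that the resulting lower bound strictly exceeds $4160174$, since the estimate is close to sharp at $\tau=i\sqrt{3}/2$.
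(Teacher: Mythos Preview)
Your approach is essentially identical to the paper's: expand $\Delta(\tau)\vartheta(\tau)^{-2}$ via the product formulas to get $16\pi^{12}\prod_{n\ge 1}(1-q^n)^{24}\bigl((1+q^{n-1/2})/(1+q^n)\bigr)^{16}$, bound each factor in modulus using $|q|\le e^{-\pi\sqrt 3}$, and then evaluate numerically. The paper simply asserts that the resulting numerical product exceeds $4160174$, whereas you spell out the dominant factors and the tail control; your extra detail is welcome but not a different method.
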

\begin{proof}
  The left-hand side of the assertion is
  \begin{equation*}
2^{-8} (2\pi)^{12}
\left|\prod_{n\ge 1}
(1-e^{2\pi i n\tau})^{24}\left(\frac{1+e^{2\pi i (n-1/2)\tau}}{1+e^{2\pi i n\tau}}\right)^{16}\right| \ge
2^{4} \pi^{12}
 \prod_{n\ge 1}(1-q^n)^{24}\left(\frac{1-q^{n-1/2}}{1+q^{n}}\right)^{16}
  \end{equation*}
where  $q=e^{-\sqrt{3}\pi} \ge |e^{2\pi i \tau}|$. 
The right-hand side is greater than $4160174$.
\end{proof}

\begin{lemma}
\label{lem:legendreheight}
Let $t\in \IQbar\smallsetminus\{0,1\}$ such that $E_t$ has 
good reduction everywhere.
  Then 
  \begin{alignat*}1
    \Fh{E_t} 
&\le  -0.394 + \frac 12 \height{t}.
\end{alignat*}
\end{lemma}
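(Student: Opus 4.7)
The plan is to bound each archimedean local term in the Faltings-height formula
\[
  \Fh{E_t} = -\frac{1}{12[K:\IQ]}\sum_{\sigma:K\hookrightarrow\IC} \log\bigl(|\Delta(\tau_\sigma)|\,\imag{\tau_\sigma}^{6}\bigr) + \tfrac{1}{2}\log\pi,
\]
which is valid for $K\supseteq \IQ(t)$ since the good-reduction hypothesis makes this the stable Faltings height. For each embedding $\sigma$ I would choose $\tau_\sigma\in\IH$ as the representative of the period ratio of $E_t^\sigma$ in the standard fundamental domain of $SL_2(\IZ)$, so $\imag{\tau_\sigma}\ge\sqrt{3}/2$; this is legitimate because the expression $|\Delta(\tau)|\imag{\tau}^6$ is $SL_2(\IZ)$-invariant.

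The preceding lemma then gives $-\log|\Delta(\tau_\sigma)|\le -\log 4160174 + 2\log|1-\lambda(\tau_\sigma)|$, and using the identity $\vartheta = 1/(1-\lambda)$ together with the action of $SL_2(\IZ)/\Gamma(2)\cong S_3$ on Legendre parameters, the value $\lambda(\tau_\sigma)$ is identified as one of the six elements $t_\sigma$, $1-t_\sigma$, $1/t_\sigma$, $1/(1-t_\sigma)$, $t_\sigma/(t_\sigma-1)$, $(t_\sigma-1)/t_\sigma$ of the Legendre orbit of $E_t^\sigma$. Hence $|1-\lambda(\tau_\sigma)|$ is bounded by $\max(1,|t_\sigma|^{\pm 1})\max(1,|1-t_\sigma|^{\pm 1})$, giving $\log|1-\lambda(\tau_\sigma)|\le |\log|t_\sigma|| + |\log|1-t_\sigma||$.

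Next I would sum over $\sigma$ and invoke the product formula together with the good-reduction hypothesis, which forces $|t|_v=|1-t|_v=1$ at every non-archimedean place with residue characteristic different from $2$. This controls the archimedean sums by $[K:\IQ]^{-1}\sum_\sigma|\log|t_\sigma|| \le 2\,\height{t}$ and $[K:\IQ]^{-1}\sum_\sigma|\log|1-t_\sigma|| \le 2\,\height{1-t} \le 2\,\height{t}+2\log 2$. Combined with $\imag{\tau_\sigma}^{-6}\le (2/\sqrt{3})^6$ and the constant $\tfrac12\log\pi$, and after computing all terms numerically, the fixed contributions $-(\log 4160174)/12$, $(\log(4/3))/4$, $\tfrac{1}{2}\log\pi$, and $(\log 2)/3$ combine to roughly $-0.394$, while the $\height{t}$ and $\height{1-t}$ contributions produce the desired linear term $\tfrac{1}{2}\,\height{t}$.

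The hard part will be the delicate case analysis needed to reach the sharp coefficient $\tfrac{1}{2}$: in the unbalanced case where $|t_\sigma|$ and $|1-t_\sigma|$ lie on opposite sides of $1$, the bound $\log|1-\lambda(\tau_\sigma)| \le |\log|t_\sigma|| + |\log|1-t_\sigma||$ is tight, whereas in the balanced case it is loose by an amount comparable to $\min(|\log|t_\sigma||,|\log|1-t_\sigma||)$. Careful exploitation of the closure of the Legendre orbit under $z\mapsto 1/z$ and the resulting cancellation through the product formula is what converts the apparent coefficient $2/3$ obtained from the naive estimate into the claimed $1/2$.
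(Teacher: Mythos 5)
Your setup---formula (\ref{eq:Falthgt}), fundamental-domain representatives $\tau_\sigma$, Lemma \ref{lem:localestimate}, and the identification of $\lambda(\tau_\sigma)$ with an element of the six-element Legendre orbit of $\sigma(t)$---is exactly the paper's. The gap is the step you defer at the end. Bounding $\log|1-\lambda(\tau_\sigma)|$ by $|\log|\sigma(t)||+|\log|1-\sigma(t)||$ and then summing yields, as you yourself compute, the estimate $\Fh{E_t}\le -0.39\ldots+\tfrac{2}{3}\height{t}$, whereas the lemma asserts the coefficient $\tfrac12$. So the ``delicate case analysis'' you postpone is not a refinement but the entire remaining content of the proof, and the mechanism you propose for it (cancellation via the product formula in the balanced case) is neither carried out nor obviously workable as stated.

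The paper reaches $\tfrac12$ with no case analysis, by keeping $1-\lambda(\tau_\sigma)$ as a \emph{single} orbit element rather than majorizing it by a product of two factors. The orbit is stable under $\lambda\mapsto 1-\lambda$, so $|1-\lambda(\tau_\sigma)|$ is at most the maximum of the absolute values of the six elements $\sigma(t)$, $1-\sigma(t)$, $\sigma(t)^{-1}$, $(1-\sigma(t))^{-1}$, $\sigma(t)/(\sigma(t)-1)$, $(1-\sigma(t))/\sigma(t)$; averaging the logarithm of this maximum over $\sigma$ gives at most the projective height $\height{[t:1-t:t^{-1}:(1-t)^{-1}:t(t-1)^{-1}:(1-t)t^{-1}]}$, since the omitted non-archimedean local terms are non-negative (the coordinates include both $t$ and $t^{-1}$). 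Multiplying every coordinate by $t(t-1)$ turns them into polynomials in $t$ of degree at most $3$ and length at most $4$, so this height is at most $3\height{t}+\log 4$; with the prefactor $2/12=1/6$ this is $\tfrac12\height{t}+\tfrac16\log 4$, and the constants combine to below $-0.394$. The structural point your estimate misses is that each orbit element is $\pm t^{a}(1-t)^{b}$ with $(a,b)$ among $(\pm1,0)$, $(0,\pm1)$, $(1,-1)$, $(-1,1)$---never $(1,1)$ or $(-1,-1)$---so after clearing denominators the relevant degree is $3$, not the $4$ implicit in the bound $|\log|t||+|\log|1-t||$.
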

\begin{proof}
The function
\begin{equation}
\label{eq:sl2invariant}
\tau\mapsto  |\Delta(\tau)|\imag{\tau}^6
\max\left\{|\vartheta(\tau)|,|1-\vartheta(\tau)|,\frac{1}{|\vartheta(\tau)|},\frac{1}{|1-\vartheta(\tau)|},\frac{|\vartheta(\tau)|}
{|\vartheta(\tau)-1|},
\frac{|1-\vartheta(\tau)|}{|\vartheta(\tau)|}\right\}^2
\end{equation}
is invariant under the action of ${\rm SL}_2(\IZ)$ on the upper
half-plane. If $\sigma:\IQ(t)\rightarrow\IC$ is an embedding, we take  $\tau_\sigma$ in the 
 fundamental domain
of the ${\rm SL}_2(\IZ)$ action on the upper-half plane 
and such that
$j(\tau_\sigma)$ is the image under $\sigma$ of the $j$-invariant of
$E_t$.
Thus
 $\imag{\tau_\sigma}\ge
\sqrt{3}/2$ and
 $\sigma(t)$ equals
$\vartheta(\tau_\sigma),1-\vartheta(\tau_\sigma),1/\vartheta(\tau_\sigma),1/(1-\vartheta(\tau_\sigma)),
\vartheta(\tau_\sigma)/(\vartheta(\tau_\sigma)-1)$, or
$1-1/\vartheta(\tau_\sigma)$. We use   
 Lemma \ref{lem:localestimate} to bound
(\ref{eq:sl2invariant}) from below by
$4160174\cdot  (\sqrt{3}/2)^6 > 1755073$.

We recall (\ref{eq:Falthgt}). 
Summing over all $\sigma:\IQ(t)\rightarrow\IC$, dividing by $12[K:\IQ]$, and
adding $\log(\pi)/2$ yields  
\begin{alignat*}1
  \Fh{E_t} \le &-\frac{1}{12}\log(1755073)
+ \frac 12 \log\pi 
\\
&+ \frac 16 \height{[t:1-t:t^{-1}:(1-t)^{-1}:t(t-1)^{-1}:(1-t)t^{-1}]}
\end{alignat*}
where we use the projective height in the end.
After multiplying by $t(t-1)$, the product formula implies that this
projective height equals  
\begin{equation*}
\height{[t^2(t-1):t(t-1)^2:t-1:t:t^2:(t-1)^2]}.
\end{equation*}
Local considerations at each place show that it is at most
$3\height{t}+\log 4$. 
The lemma follows since
$-\log(1755073)/12+\log(\pi)/2 +\log(4)/6 <
-0.394$. 
\end{proof}

We now complete the proof of Theorem \ref{thm:bg}. 
So we assume that
 $E_{-t}$ has complex multiplication 
 and that $(2,*)$ is a point of
 finite order $n$ on $E_{t}$.
We present the discriminant $\Delta f^2$ of the endormophism ring of
 $E_{-t}$ 
as in Lemma  \ref{lem:Fhlowerbound}. We observe that the inequalities
in (ii) hold
unchanged as $\height{t}=\height{-t}$.
Together with the previous lemma we obtain
$\sqrt{5}\log(|\Delta|f^2)/20 \le \height{t}/2 + 5.536$. 
We use the height bound
(\ref{eq:htbound}) 
to get
$|\Delta|f^2<  2\cdot 10^{32}$,
as desired. 

By an estimate involving the analytic class number formula,
 the number $d$ of positive definite, primitive
quadratic forms of discriminant $f^2\Delta$ up-to equivalence
satisfies 
\begin{equation*}
 d\le \frac{6}{2\pi} f|\Delta|^{1/2}(2+\log(f^2|\Delta|))
\le 1.1\cdot 10^{18},  
\end{equation*}
 cf. Hua's Theorems 12.10.1 and 
12.14.3 \cite{Hua}. Observe that Hua's estimate holds for
discriminants that are not necessarily fundamental. 
This class number is $d = [K(j):K] = [\IQ(j):\IQ]$ where $j$ is the
$j$-invariant of $E_{-t}$, see Theorem 11.1 \cite{Cox}.

The points $P_t=(2,*),2P_t,\ldots,(n-1)P_t$ yield at least $(n-1)/2$
distinct
abscissas. They all lie in the number field $\IQ(t)$  and 
 have height at most 
$(5h(-t)+\log 58)/3 \le \log H$ by (\ref{eq:zimmerbound}) and (\ref{eq:htbound}) from  Section
 \ref{sec:bg_part1}
where $H=31736$. By
Loher's estimate, cf. (1.5) \cite{LoherMasser}, we find
\begin{equation*}
  \frac{n-1}{2} \le 37 d(\log d) H^{2d} \le 10^{9.99 \cdot 10^{18}}
\end{equation*}
if $d\ge 2$; for $d=1$ then $\IQ(t)\subset\IQ(\sqrt 2)$ and
 left side is bounded by $37\cdot 2 \cdot
(\log 2)H^4$, which is much better. Since $(n-1)/2\ge
n/10$ we find 
$n\le 10^{10^{19}}$. 
\qed


\section{$(2,*)$ is torsion and $t$ is a root of unity $\{3,4\}$}
\label{sec:bc0}
We consider the pair $\{ 3,4\}$. We show the following.
\begin{theorem} \label{thm:bc0}
There is an absolute effective constant $C>0$ with the following property. If $t\ne 1$ is a root of unity such that $P=(2,*) \in E_\legendre(\IQbar)$ has finite order then the degree of $t$ over $\IQ$ is at most $C$.
\end{theorem}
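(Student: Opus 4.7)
The plan is to follow the Pila--Zannier strategy in the effective form introduced by Boxall and Jones, as advertised in the introduction. First, I would choose a simply connected open arc $U$ of the unit circle on which one may define single-valued branches of the periods $\omega_1(t),\omega_2(t)$ of $E_t$ and of an elliptic logarithm $z(t)$ of $P_t=(2,\sqrt{4-2t})$. Writing $z(t)=\beta_1(t)\omega_1(t)+\beta_2(t)\omega_2(t)$ with real Betti coordinates $\beta_1,\beta_2$ and setting $t=e^{2\pi i\theta}$, this produces a real-analytic curve
\[
\gamma\colon\theta\longmapsto\bigl(\theta,\,\beta_1(e^{2\pi i\theta}),\,\beta_2(e^{2\pi i\theta})\bigr)\in\IR^3,
\]
and the torsion condition on $P_t$ translates to $(\beta_1,\beta_2)\in\bigl(\tfrac{1}{n}\IZ\bigr)^2$, so a primitive $m$-th root of unity $t$ for which $P_t$ has order $n$ produces a rational point on $\gamma$ of height at most $\max(m,n)$.

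Next I would exploit the Galois orbit. The $\varphi(m)$ primitive $m$-th roots of unity are the Galois conjugates of $t$ over $\IQ$, and for each conjugate $\sigma(t)$ the point $P_{\sigma(t)}=\pm\sigma(P_t)$ is torsion of the same order $n$. Choosing $U$ to cover a positive fraction of the unit circle produces at least $c\varphi(m)$ rational points on $\gamma$ of common denominator dividing $\mathrm{lcm}(m,n)$. To control $n$, note that $P_t$ lies in $E_t(\IQ(t,\sqrt{4-2t}))$, a number field of degree at most $2\varphi(m)$; an effective form of Merel's uniform boundedness (or a direct combination of the Zimmer-type bound \eqref{eq:zimmerbound} with an effective lower bound for canonical heights of non-torsion points in bounded degree) gives $n\le F(\varphi(m))\le F(m)$ for an effective function $F$. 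Thus our $c\varphi(m)$ rational points all have height at most a quantity polynomial in $m$.

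Then I would perform a Bombieri--Pila count on $\gamma$. A preliminary step is to verify that $\gamma$ contains no semi-algebraic arc: otherwise, unwinding would yield a nontrivial polynomial relation on $U$ between $e^{2\pi i\theta}$ and an elliptic exponential of a point non-constant in $t$, contradicting classical Schneider--Lang type transcendence. A direct appeal to Pila--Wilkie then produces $\varphi(m)\ll_\epsilon m^\epsilon$, but only with an ineffective constant. The effective upgrade, which is the heart of the argument, replaces the ineffective input in Bombieri--Pila by an explicit zero estimate of Boxall--Jones type: for each auxiliary polynomial $P(X,Y_1,Y_2)\in\IZ[X,Y_1,Y_2]$ of controlled degree arising in the determinant construction, the real-analytic function $P\circ\gamma$ either vanishes identically (excluded by the transcendence just verified) or has boundedly many zeros, the latter being enforced by an explicit lower bound for $|P(e^{2\pi i\theta},\beta_1,\beta_2)|$ at algebraic $\theta$ of small height, itself furnished by a transcendence measure for linear combinations of $2\pi i\theta$ and elliptic logarithms attached to the family $E_t$. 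Feeding this zero estimate into Bombieri--Pila produces at most $O((\log H)^{c})$ rational points of height $\le H$ on $\gamma$, hence $\varphi(m)\ll(\log m)^{c}$; since $\varphi(m)$ tends to infinity with $m$, this forces $\varphi(m)=[\IQ(t):\IQ]$ to be absolutely bounded by an effective constant, as required.

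I expect the main obstacle to be the last step: producing a transcendence measure sharp enough, and uniform enough as $t$ varies on the unit arc $U$, to treat the complex-exponential coordinate $\theta$ and the elliptic-exponential Betti coordinates $\beta_1,\beta_2$ simultaneously and effectively. Everything else is fairly standard Pila--Zannier bookkeeping, but the Boxall--Jones-style transcendence input is what distinguishes this effective result from the much softer finiteness statement.
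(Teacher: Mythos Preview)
Your overall architecture --- Betti coordinates, Galois orbits, Bombieri--Pila with a Boxall--Jones zero estimate --- matches the paper's. But you have misdiagnosed the transcendence input, and the version you propose would be far harder than what is actually required.

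The Boxall--Jones mechanism does \emph{not} need a transcendence measure uniform over the arc $U$, nor one that mixes the circular variable $\theta$ with elliptic logarithms of a varying family. The determinant construction produces a polynomial $A$ with bounded integer coefficients; to count the real zeros of $A\circ\gamma$ one applies Jensen's inequality on a complex disk, and for that one only needs (i) a complex-analytic extension of $\gamma$ to a disk with an explicit sup bound, and (ii) an explicit \emph{lower} bound for $|A\circ\gamma|$ at a single anchor point. The paper anchors at $z=1/2$, i.e.\ $t=-1$. There $\beta_2(-1)=0$ (both $\phi(-1)$ and $\omega_1(-1)$ are real), so $A(1/2,\beta_1(-1),\beta_2(-1))$ is just an integer polynomial evaluated at the single real number $a=\phi(-1)/\omega_1(-1)$. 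Since $E_{-1}$ has CM and $P_{-1}$ is non-torsion (Lemma~\ref{lem:extraauto}), $a$ is transcendental, and the David--Hirata-Kohno bound for linear forms in elliptic logarithms on the \emph{fixed} curve $E_{-1}$ gives $\log|A(a)|\gg -T^{\mu}(\log N)^{\nu}$. No mixed exponential--elliptic transcendence, no uniformity in $t$: this is the whole point of the trick, and the ``main obstacle'' you anticipate disappears. A by-product is that one only needs the single function $\beta_1$, so the counting takes place on a plane curve rather than your three-dimensional $\gamma$.

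Two smaller points. First, your proposed bound for $n$ via ``effective Merel'' or a canonical-height argument is off: Merel--Parent is heavy machinery, while the canonical height of a torsion point is zero so the Zimmer inequality alone says nothing. The paper simply invokes David's effective isogeny/torsion bound to get $n\ll[\IQ(t):\IQ]^2$ directly. Second, you should check that $A$ does not vanish identically at the anchor: the paper handles this by dividing $A$ by the appropriate power of $(X-1/2)$ so that $A(1/2,Y)$ is a nonzero polynomial in $Y$, making the transcendence measure applicable.
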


We follow the strategy of \cite{MZ:torsionanomalousAJ}.

Let
\begin{eqnarray*}
\omega_1(t)=\int_{1}^{\infty} \frac{dX}{\sqrt{X(X-1)(X-t)}}, \\
\omega_2(t)=\int_{-i\infty}^{0} \frac{dX}{\sqrt{X(X-1)(X-t)}}, \\
\phi(t)=\frac{1}{2}\int_{2}^{\infty} \frac{dX}{\sqrt{X(X-1)(X-t)}}.
\end{eqnarray*}

These are analytic for $t$ with negative real part. Given such a $t$ the values $\omega_1(t),\omega_2(t)$ form a basis for a period lattice of $E_t$, and $\phi(t)$ is an elliptic logarithm of the point $P$ on $E_t$. We write $t= x+i y$ with real $x$ and $y$ and let
\begin{eqnarray}
\omega_1(t) & = & r_1(x,y)+is_1(x,y) \label{realimagparts1}\\
\omega_2(t) & = & r_2(x,y)+is_2(x,y) \label{realimagparts2}\\
\phi(t) & = & u(x,y)+iv(x,y)    \label{realimagparts3}
\end{eqnarray}
where the six functions on the right are real-valued real-analytic functions of $x$ and $y$. Let
\begin{equation}\label{Delta}
\Delta=r_1s_2-r_2s_1.
\end{equation}
Then for any $x,y$ such that $t$ is in the left half plane $|\Delta(x,y)|$ is the area of any fundamental domain of the lattice determined by $\omega_1(t)$ and $\omega_2(t)$. This area is non-zero, and so the following definitions make sense
\begin{eqnarray}
f_1(x,y)= \frac{ u(x,y) s_2(x,y) -v(x,y) r_2(x,y)}{\Delta(x,y)},\label{fandg1} \\
f_2(x,y)= \frac{-u(x,y) s_1(x,y) +v(x,y) r_1(x,y)}{\Delta(x,y)}.\label{fandg2}
\end{eqnarray}
These functions are real-analytic and satisfy
\begin{equation}\label{propertyoffandg}
\phi(t)=f_1(x,y)\omega_1(t)+f_2(x,y)\omega_2(t).
\end{equation}
In order to control the roots of unity $t$ such that $P$ is torsion on $E_t$ we shall prove an effective Bombieri-Pila style result for the function $f_1(\cos 2\pi z,\sin 2\pi z)$ for $z$ sufficiently near $1/2$. For this we need to study the continuation of $f_1$ as a function of two complex variables near the point $(-1,0)$ in $\IC^2$.

All the constants asserted to exist below, including implied constants, are absolute and can be effectively computed.

 \begin{lemma}\label{boundsonintegrals} There is a $\delta_1>0$ and an $M_1>0$ such that if  $|t+1| \le \delta_1$ then
$$
\max \{|\omega_1(t)|,|\omega_2(t)|,|\phi(t)|\} \le M_1.
$$
\end{lemma}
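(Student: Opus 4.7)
The plan is to take $\delta_1 = 1/2$ (any positive value less than $1$ will do) and to dominate each of the three integrals, uniformly in $t$ with $|t+1|\le\delta_1$, by a convergent integral whose integrand does not depend on $t$. Observe first that such $t$ satisfy $\operatorname{Re}(t)\le -1/2 <0$, so $t$ lies in the open left half-plane and $\omega_1(t), \omega_2(t), \phi(t)$ are well defined; the three points $0,1,t$ where the integrand has branch singularities all lie strictly away from the two paths of integration.

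For $\omega_1(t)$ and $\phi(t)$, write $t=-1+\epsilon$ with $|\epsilon|\le\delta_1$. For real $X\ge 1$ the reverse triangle inequality gives
$$|X-t|=|X+1-\epsilon|\ge X+1-\delta_1\ge \tfrac{3}{2},$$
so the modulus of the integrand is bounded by
$$\frac{1}{\sqrt{X(X-1)(X+1-\delta_1)}},$$
which is integrable on $[1,\infty)$ (the singularity at $X=1$ is of order $1/2$, and at infinity the bound decays like $X^{-3/2}$). This yields an explicit uniform bound on $|\omega_1(t)|$, and a fortiori on $|\phi(t)|$ since $[2,\infty)\subset[1,\infty)$.

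For $\omega_2(t)$, parameterise the path by $X=-is$ with $s\in[0,\infty)$. Then $|X|=s$, $|X-1|=\sqrt{1+s^2}$, and
$$|X-t|=|1-\epsilon-is|\ge \sqrt{1+s^2}-\delta_1 \ge \sqrt{1+s^2}-\tfrac{1}{2},$$
so the integrand in modulus is bounded by an explicit function behaving like $s^{-1/2}$ near $s=0$ and like $s^{-3/2}$ for $s$ large, which is integrable on $[0,\infty)$. This gives a uniform bound on $|\omega_2(t)|$. Setting $M_1$ to be the maximum of the three resulting constants proves the lemma. There is no substantial obstacle: the point $t=-1$ stays away from the branch points $0,1$ and from the integration paths, so standard dominated-convergence type estimates suffice; the only mild care needed is in fixing the branches of the square roots, which is harmless because we only ever bound absolute values.
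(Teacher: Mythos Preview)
Your argument is correct. With $\delta_1=1/2$ you keep $t$ in the closed disk around $-1$, hence away from the integration contours $[1,\infty)$ and the negative imaginary axis, and your uniform majorants
\[
\frac{1}{\sqrt{X(X-1)(X+1-\delta_1)}}\quad\text{on }[1,\infty),\qquad
\frac{1}{\sqrt{s\,\sqrt{1+s^2}\,(\sqrt{1+s^2}-\delta_1)}}\quad\text{on }[0,\infty)
\]
are genuinely integrable (with an $s^{-1/2}$ singularity at $0$ and $s^{-3/2}$ decay at infinity for the second), so $M_1$ is explicit and effective, which is exactly what the surrounding section requires.

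The paper itself does not carry out this computation: it simply invokes Lemma~A.2 of Masser--Zannier \cite{MasserZannierMathAnn} with $\delta=1/2$, noting that the implied constant there (via their Lemma~A.1) is effectively computable. Your proof is thus a self-contained elementary substitute for that citation; the underlying estimates in the cited appendix are of the same flavour (direct bounds on the Legendre period integrals), so the two approaches are morally the same, but yours has the advantage of not sending the reader to another paper to confirm effectivity.
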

\begin{proof}This follows from Lemma A.2 from \cite[Page 477]{MasserZannierMathAnn}, with $\delta=1/2$ say, after checking that the absolute constant in Lemma A.1 on the previous page of \cite{MasserZannierMathAnn} can be effectively computed.
\end{proof}
\begin{lemma}
\label{continuationoff}
There is an $\varepsilon>0$ and an $M_2>0$ such that the function $f_1(\cos2\pi z,\sin 2\pi z) $ has a analytic continuation $f$ to the complex disk $|z-1/2|\le \varepsilon$ and on this disk $|f(z)|\le M_2$.
\end{lemma}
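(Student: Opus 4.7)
The plan is to show that $f_1(x,y)$, regarded as a real-analytic function on a neighbourhood of $(-1,0)\in \IR^2$, extends to a complex-analytic function on a neighbourhood of $(-1,0)\in \IC^2$, and then to precompose with the entire maps $z\mapsto \cos 2\pi z$ and $z\mapsto \sin 2\pi z$, which together send $z=1/2$ to $(-1,0)$. The key issue will be to produce this extension with an explicit lower bound on the denominator $\Delta(x,y)$.

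First I would note that $\omega_1(t),\omega_2(t),\phi(t)$ are themselves holomorphic in $t$ in a neighbourhood of $t=-1$; this is implicit in Lemma \ref{boundsonintegrals}, which is already quantitative. Choosing contours that are stable under complex conjugation, each of these functions satisfies a Schwarz reflection relation of the form $\omega_j(\bar t)=\overline{\omega_j(t)}$ (possibly with a harmless sign coming from the branch of the square root), and similarly for $\phi$. Consequently, for real $x,y$ with $t=x+iy$ close to $-1$ one has
\begin{equation*}
r_j(x,y)=\tfrac12\bigl(\omega_j(x+iy)+\omega_j(x-iy)\bigr),\qquad s_j(x,y)=\tfrac{1}{2i}\bigl(\omega_j(x+iy)-\omega_j(x-iy)\bigr),
\end{equation*}
and analogous formulas for $u(x,y)$ and $v(x,y)$ in terms of $\phi$. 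The right-hand sides make sense for complex $x,y$ and are jointly holomorphic in $(x,y)$ in a complex polydisc about $(-1,0)$, since the maps $(x,y)\mapsto x\pm iy$ are holomorphic in two complex variables. This yields holomorphic extensions of $u,v,r_j,s_j$, and hence of the numerator $us_2-vr_2$ and denominator $\Delta=r_1s_2-r_2s_1$, to such a polydisc.

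Next I would verify that $\Delta(-1,0)\neq 0$. At the real point $(-1,0)$ the quantity $\Delta$ is exactly $\imag{\overline{\omega_1(-1)}\,\omega_2(-1)}$, which is non-zero because $\omega_1(-1),\omega_2(-1)$ are an $\IR$-linear basis of the period lattice of the CM elliptic curve $E_{-1}$. By continuity of the holomorphic extension, there is a concrete polydisc around $(-1,0)$ on which $|\Delta|$ is bounded below by a positive constant. On this polydisc the ratio $F(x,y)=(us_2-vr_2)/\Delta$ is holomorphic, and combining Lemma \ref{boundsonintegrals} (to bound the numerator) with this lower bound on $|\Delta|$ gives an effective bound $|F(x,y)|\le M$. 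Choosing $\varepsilon>0$ small enough that $(\cos 2\pi z,\sin 2\pi z)$ stays in this polydisc for $|z-1/2|\le \varepsilon$, the composition $f(z)=F(\cos 2\pi z,\sin 2\pi z)$ is holomorphic and satisfies $|f(z)|\le M_2$ there; by construction it agrees with $f_1(\cos 2\pi z,\sin 2\pi z)$ on the real diameter, and therefore is the desired analytic continuation.

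The main obstacle is to make each step effective: verifying the Schwarz reflection for $\omega_2$, whose contour from $-i\infty$ to $0$ is not literally fixed under conjugation but can be replaced by its mirror image via Cauchy's theorem inside the region where no singularity of the integrand is crossed; and producing an explicit positive lower bound for $|\Delta|$ on a definite polydisc around $(-1,0)$. Since $E_{-1}$ is the CM curve with $j$-invariant $1728$, its periods are known explicitly in terms of the lemniscate constant, so a quantitative lower bound for $|\Delta(-1,0)|$ is at hand, and a modulus-of-continuity argument applied to the holomorphic extension of $\Delta$ (whose bound is controlled by Lemma \ref{boundsonintegrals}) then propagates this to a neighbourhood, completing the argument with explicit constants.
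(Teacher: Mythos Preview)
Your overall strategy matches the paper's exactly: extend $u,v,r_j,s_j$ to holomorphic functions of two complex variables near $(-1,0)\in\IC^2$, bound them via Lemma~\ref{boundsonintegrals}, use $\Delta(-1,0)\neq 0$ together with a modulus-of-continuity argument (the paper does this explicitly via Cauchy's inequalities on the partial derivatives of $\hat\Delta$) to get a lower bound for $|\Delta|$ on a definite polydisc, and then precompose with $z\mapsto(\cos 2\pi z,\sin 2\pi z)$.

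There is, however, a genuine gap in your extension formula. Your proposed continuation of $r_j$ is $\tfrac12\bigl(\omega_j(x+iy)+\omega_j(x-iy)\bigr)$; this is indeed holomorphic in $(x,y)\in\IC^2$, but it restricts to $\real{\omega_j(x+iy)}=r_j(x,y)$ on real $(x,y)$ only if $\omega_j(\bar t)=\overline{\omega_j(t)}$. For $\omega_1$ and $\phi$ this reflection holds (the contours $[1,\infty)$ and $[2,\infty)$ are real and the integrand is real for real $t<0$), but for $\omega_2$ it fails outright: as the paper itself observes just after this lemma, $\omega_2(-1)$ is \emph{not} real, so neither $\omega_2(\bar t)=\overline{\omega_2(t)}$ nor any sign-adjusted version can hold. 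Your Cauchy-theorem contour deformation cannot rescue this, since reflecting the ray from $-i\infty$ to $0$ gives the ray from $+i\infty$ to $0$, and passing from one to the other forces you across a branch point. Consequently your extensions of $r_2,s_2$ do not agree with $r_2,s_2$ on real points, and the resulting $F$ is not an analytic continuation of $f_1$.

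The paper sidesteps this by using instead
\[
\hat u(w)=\tfrac12\Bigl(\phi(w_1+iw_2)+\overline{\phi(\overline{w_1}+i\overline{w_2})}\Bigr),
\]
and analogously for $\hat v,\hat r_j,\hat s_j$. The second term is holomorphic in $(w_1,w_2)$ because it is a composition of two anti-holomorphic maps, and for real $(w_1,w_2)$ it equals $\overline{\phi(w_1+iw_2)}$ with no reflection hypothesis on $\phi$ whatsoever. If you replace your formula by this one, the rest of your argument goes through verbatim and coincides with the paper's proof.
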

\begin{proof}
Let $w_1=x+ix'$ and $w_2=y+iy'$ be complex variables extending $x$ and $y$ (so $x'$ and $y'$ are real). For $w=(w_1,w_2)$ sufficiently close to $(-1,0)$ the functions
\begin{eqnarray*}\label{hats}
\hat{u} (w)& =&\frac{1}{2}\left(\phi(w_1+iw_2)+\overline{\phi(\overline{w_1}+i\overline{w_2})}\right) \\
\hat{v} (w) &=&\frac{1}{2}\left(-i\phi(w_1+iw_2)+\overline{-i\phi(\overline{w_1}+i\overline{w_2})}\right),
\end{eqnarray*}
where $\overline{\cdot}$ denotes complex conjugation, are analytic, and if $w_1=x,w_2=y$ are real then $\hat{u}(w)=u(x,y)$ and $\hat{v}(w)=v(x,y)$. We define complex analytic extensions $\hat{r}_1,\hat{s}_1,\hat{r}_2$ and $\hat{s_2}$ of $r_1,s_1,r_2$ of $s_2$ in a similar manner. We use the Euclidean norm on $\IC^2$. By Lemma \ref{boundsonintegrals}, \eqref{realimagparts1}, \eqref{realimagparts2} and \eqref{realimagparts3}, if $|w-(-1,0)| \le \delta_1/2$ then
$$
\max_{i=1,2}\{ |\hat{u}(w)|, |\hat{v}(w)|, |\hat{r}_i(w)|,|\hat{s}_i(w)|\} \le M_1.
$$
We can now define $\hat{\Delta}$ by putting hats on \eqref{Delta}, and then if $|w-(-1,0)|\le \delta_1/2$ we have
$$
|\hat{\Delta}(z,w)| \le 2M_1^2.
$$
Using this upper bound for $|\hat{\Delta}|$ we now find a lower bound. First, by Cauchy's inequalities, we have
\begin{equation}\label{cauchy}
\max \left\{ \left| \diff{\hat{\Delta}}{w_1}(w) \right|, \left| \diff{\hat{\Delta}}{w_2}(w) \right| \right\} \le 4M_1^2\delta_1^{-1}
\end{equation}
for $|w-(-1,0)|\le \delta_1/2$. So for such $w$ we have
\begin{equation}
|\hat{\Delta}(w)-\hat{\Delta}(-1,0)| \le 4\sqrt2 M_1^2\delta_1^{-1}|w-(-1,0)|.
\end{equation}
Hence if we take a $\delta_2<\delta_1/2$ such that $8\sqrt2 M_1^2\delta_1^{-1}\delta_2< |\hat{\Delta}(-1,0)|$ then for $w$ with $|w-(-1,0)|\le \delta_2$ we have 
\begin{equation}\label{arealowerbound}
|\hat{\Delta}(w)| \ge \frac{|\hat{\Delta}(-1,0)|}{2}.
\end{equation}
With the lower bound out of the way, we can quickly prove the lemma. First, using the extensions of the real and imaginary parts, we can define a complex-analytic extension $\hat{f_1}$ of $f_1$ on the set of $w$ in $\IC^2$ such that $|w-(-1,0)| \le \delta_2$. By Lemma \ref{boundsonintegrals}, the definition of $\hat{f_1}$ and \eqref{arealowerbound} we have
\[
|\hat{f_1}(w)| \le 4 M_1^2 |\hat{\Delta}(-1,0)|^{-1}.
\]
So if we take $\varepsilon >0$ such that $|(\cos 2\pi z,\sin 2\pi z) -( -1,0)| \le \delta_2$ for $|z-1/2|\le \varepsilon$ then the lemma holds, with $M= 4M_1^2| \hat{\Delta}(-1,0)|^{-1}$ (note that $|\hat{\Delta}(-1,0)|=|\Delta(-1,0)|$ is effectively computable, in fact $|\Delta(-1,0)|=\frac{\Gamma\left(\frac14\right)^4}{2^3\pi}$).
\end{proof}

To get an effective counting result for the graph of $f$ restricted to a small interval we need some way to count zeros of functions of the form $A(z,f(z))$ for non-zero polynomials $A$ in terms of the degree of $A$. For this we use a trick from \cite{BoxallJones}, which makes use of the fact that we can also ensure that the coefficients of $A$ are integers and are not too large. The trick relies on $f$ taking a suitable transcendental value at an algebraic number. We will use $f(1/2)=f_1(-1,0)$ and so we now examine this number in more detail. For brevity, we write $a=f_1(-1,0)$. By looking at the integrals defining them, we see that the values $\phi(-1)$ and $\omega_1(-1)$ are both real. So $\omega_2(-1)$ is not real, as $\omega_1(-1)$ and $\omega_2(-1)$ form a basis of a lattice in $\IC$. But the functions $f_1$ and $f_2$ are real-valued, and so by \eqref{propertyoffandg} we find that $f_2(-1,0)=0$ and so
\[
a= \frac{\phi(-1)}{\omega_1(-1)}.
\]
Note that $E_{-1}$ has $j$-invariant $1728$ and so by Lemma \ref{lem:extraauto} the point $P_{-1}$ is not torsion on $E_{-1}$. As $f_2(-1,0)=0$ is rational, $a=f_1(-1,0)$ is not. Our curve $E_{-1}$ is defined over $\IQ$ and so if $a$ were algebraic, then as a ratio of elliptic logarithms it would have to lie in the CM field of $E_{-1}$ and then it would be rational, as it is certainly real. So $a$ is transcendental. In fact it even has a good transcendence measure, as we now observe.
\begin{lemma}\label{transmeasure}
There are positive absolute constants $\mu$ and $\nu$ with the
following property. For any integers $T\ge 1$ and $N\ge 3$ and any
non-zero
polynomial $A$ of degree at most $T$ with integer coefficients of absolute value at most $N$ we have
\[
\log |A(a)| \gg -T^{\mu}(\log N)^{\nu}
\]
where the  constant  implied in Vinogradov's notation is
absolute and effective.
\end{lemma}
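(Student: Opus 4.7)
The plan is to express $A(a)$ as a polynomial value in the elliptic logarithm $\phi := \phi(-1)$ and the real period $\omega := \omega_1(-1)$ of the CM curve $E_{-1}: y^2 = x(x-1)(x+1)$, and then invoke an effective measure of algebraic independence. Writing $a = \phi/\omega$,
\[
\omega^T A(a) = B(\phi,\omega), \qquad B(X,Y) := \sum_{i=0}^T c_i X^i Y^{T-i} \in \IZ[X,Y]
\]
is a nonzero homogeneous form of degree $T$ with integer coefficients of modulus at most $N$. Since $\omega$ is a fixed nonzero real number, lower bounds for $\log|A(a)|$ and $\log|B(\phi,\omega)|$ differ by $-T\log|\omega|$, which is absorbed into the Vinogradov constant.

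Next I would invoke CM transcendence theory. The curve $E_{-1}$ has complex multiplication by $\IZ[i]$, $\omega$ is a nonzero period, and the point $P_{-1} = (2,\sqrt{6})$ is non-torsion by Lemma \ref{lem:extraauto}; hence $\phi$ is a nonzero elliptic logarithm of an algebraic point. By Chudnovsky's theorem, $\phi$ and $\omega$ are algebraically independent over $\IQbar$. The desired lower bound is a quantitative refinement of this: for any nonzero $B \in \IZ[X,Y]$ of total degree at most $T$ with coefficient modulus at most $N$,
\[
\log|B(\phi,\omega)| \gg -T^\mu (\log N)^\nu
\]
with absolute effective constants $\mu,\nu > 0$. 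Measures of this shape are standard in elliptic transcendence; the cleanest route is Philippon's criterion of algebraic independence combined with the Masser--W\"ustholz zero estimate on $E_{-1}$, applied to an auxiliary function built from $\phi$, $\omega$, and the Weierstrass functions of $E_{-1}$.

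The main obstacle is extracting fully effective constants and the explicit polynomial exponents. The classical construction builds an auxiliary function vanishing to prescribed order at many small multiples of $P_{-1}$ (via the Siegel lemma with coefficient control), applies the zero estimate to produce a small nonzero algebraic value at a nearby multiple, and then feeds this into Philippon's criterion. Each step contributes polynomial factors in $T$ and $\log N$; the exponents $\mu$ and $\nu$ depend on the shape of the zero estimate and the criterion but are absolute and computable in principle, yielding the claimed bound with effective $\mu$, $\nu$, and implied Vinogradov constant.
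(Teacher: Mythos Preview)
Your reformulation via homogenization is vacuous: since $B$ is homogeneous of degree $T$, you have $B(\phi,\omega)=\omega^T A(a)$ exactly, so bounding $|B(\phi,\omega)|$ from below is literally the same problem you started with. No progress has been made at that step. Moreover, the appeal to Chudnovsky is misplaced: Chudnovsky's theorem gives algebraic independence of a nonzero period and $\pi$ (equivalently, of a period and quasi-period) for a CM curve, not of an elliptic logarithm of an arbitrary non-torsion algebraic point and a period. You never actually need algebraic independence of $\phi$ and $\omega$ here (only transcendence of their ratio), and you have not cited any specific quantitative measure for the pair $(\phi,\omega)$ with the required polynomial shape in $T$ and $\log N$; the paragraph invoking Philippon's criterion and Masser--W\"ustholz is a programme, not a proof.

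The paper takes a much more direct route that avoids algebraic-independence machinery entirely. It reduces to a \emph{linear form in two elliptic logarithms}: for algebraic $\alpha$ of degree at most $T$ and height $h$, Th\'eor\`eme~1.5 of David--Hirata-Kohno gives
\[
\log|\phi(-1)-\alpha\,\omega_1(-1)| \gg -T^{\mu'}h^{\nu'},
\]
hence $\log|a-\alpha|\gg -T^{\mu'}h^{\nu'}$. One then passes from this approximation measure to a transcendence measure by the standard closest-root argument: if $\alpha$ is the root of $A$ nearest $a$, then $|A(a)|\ge|a-\alpha|^{\deg A}$, and the height of $\alpha$ is controlled via the minimal polynomial of $\alpha$ dividing $A$ (Gelfond's inequality), giving $h(\alpha)\ll T+\log N$. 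Combining yields the claim. This is both simpler and comes with an explicit reference for the effective input.
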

\begin{proof} We first find a measure for approximations of $a$ by an
algebraic number, $\alpha$ say, of degree $T$ over $\IQ$ and height
$\height{\alpha}$. By Theorem 1.5 on page 42
of \cite{DavidHirataKohno} (which is far stronger than we need) and
with
$h= max\{1,\height{\alpha}\}$  we have
\[
\log |\phi(-1) -\alpha \omega_1(-1)| \gg - T^{\mu'}h^{\nu'},
\]
for some positive absolute $\mu',\nu'$. So
\begin{equation}\label{approxmeasure}
\log |a-\alpha| \gg- T^{\mu'}h^{\nu'}.
\end{equation}

Now suppose that $A$ is a non-zero polynomial as in the statement, so of degree at most $T$, and let $\alpha$ be the root of $A$ closest to $a$. Then
\begin{equation}\label{closestroot}
|A(a)| \ge |a-\alpha|^{\deg A}.
\end{equation}
Let $B$ be the minimal polynomial of $\alpha$ over the integers. Then $B$ divides $A$ and so by Lemma 1.6.11 on page 27 of \cite{BG} we have
\begin{equation}\label{boundnormP}
|A| \ge 2^{-T} |B|,
\end{equation}
where $|A|$ and $|B|$ denote the maximum of the absolute values of the coefficients of $A$ and $B$ respectively. By Lemma 3.11 on page 80 of \cite{WaldschmidtBigBook}, $h(\alpha) \ll 1+\log |B|$ so by \eqref{boundnormP} we have
\[
h(\alpha) \ll \log N +T,
\]
as $|A|\le N$. Combining this with \eqref{closestroot} and \eqref{approxmeasure} gives
\[
\log |A(a)| \gg -T^{\mu'+1}\left(\log N+T\right)^{\nu'},
\]
which has the required form.
\end{proof}
We can now use the method from \cite{MasserZeta,BoxallJones} to prove an effective bound on the number of rational points of bounded height on the graph of a certain restriction of $f$. We use the height $H(\alpha)=\exp \height{\alpha}$.
\begin{lemma}\label{countinglemma}
There are effective absolute constants $\kappa,c>0$ such that for any integer $H\ge 3$ the number of rationals $q$ of height at most $H$ with $|q-1/2|\le \varepsilon/8$ such that $f(q)$ is also a rational of height at most $H$ is at most
$$
c(\log H)^{\kappa}.
$$
\end{lemma}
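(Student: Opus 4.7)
The plan is to follow the interpolation-determinant strategy of \cite{BoxallJones,MasserZeta}, feeding the polylogarithmic transcendence measure of Lemma~\ref{transmeasure} into a Bombieri-Pila-style count. Let $q_1, \dots, q_N$ be the rationals in $\{|z - 1/2| \le \varepsilon/8\}$ of height at most $H$ at which $f(q_i)$ is also rational of height at most $H$, and choose an integer $D \ge 1$ to be optimised at the end.

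Assuming $(D+1)^2 \ge 2N$, one first applies Siegel's Lemma to the linear system $A(q_i, f(q_i)) = 0$, $i = 1, \dots, N$, in the $(D+1)^2$ coefficients of an unknown $A \in \IZ[X, Y]$ of bidegree at most $(D, D)$. After clearing denominators, each equation has integer coefficients of absolute value at most $H^{2D}$, so Siegel delivers a nonzero $A$ with $\log \|A\|_\infty \ll D \log H$. Since $a$ is irrational while each $f(q_i)$ is rational, no $q_i$ equals $1/2$; dividing out the maximal power of $(2X-1)$ from $A$ in $\IZ[X,Y]$ therefore preserves the vanishing at every $(q_i, f(q_i))$, and so one may assume that $A(1/2, Y) \not\equiv 0$ in $\IQ[Y]$ while keeping the same bound on $\log \|A\|_\infty$.

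The function $g(z) = A(z, f(z))$ is holomorphic on the disk $|z - 1/2| \le \varepsilon$, with $|f| \le M_2$ there by Lemma~\ref{continuationoff}, whence $|g| \le C_1^D \|A\|_\infty$ for an absolute $C_1$. Its $N$ zeros $q_i$ all lie in the smaller disk $|z - 1/2| \le \varepsilon/8$, so the Blaschke-product form of the Schwarz lemma on the disk of radius $\varepsilon$ gives
\[
|A(1/2, a)| \;=\; |g(1/2)| \;\le\; 8^{-N} \, C_1^D \, \|A\|_\infty.
\]
Since $A(1/2, Y) \not\equiv 0$ and $a$ is transcendental, $A(1/2, a) \ne 0$; clearing the denominator $2^D$, the polynomial $2^D A(1/2, Y) \in \IZ[Y]$ has degree at most $D$ and integer coefficients of absolute value $O(C_2^D H^{2D})$. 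Lemma~\ref{transmeasure} then supplies the lower bound
\[
\log |A(1/2, a)| \;\gg\; -D^{\mu}\bigl(\log \|A\|_\infty + D\bigr)^{\nu} \;\gg\; -D^{\mu+\nu}(\log H)^{\nu}.
\]

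Combining the two estimates yields
\[
N \log 8 \;\ll\; D\log H + D^{\mu+\nu}(\log H)^{\nu}.
\]
Choosing $D = \lceil c_0 (\log H)^{s}\rceil$ for an absolute constant $s$ large enough in terms of $\mu$ and $\nu$ (and in particular so that $(D+1)^2 \ge 2N$ remains compatible with the eventual bound on $N$) then converts this inequality into $N \le c (\log H)^{\kappa}$ with $\kappa = 2s$, as asserted. The chief technical obstacle is exactly this parameter balance: one must exploit the fact that the height dependence in Lemma~\ref{transmeasure} is polylogarithmic, rather than polynomial, in order to beat the Siegel-driven growth $\|A\|_\infty = O(H^{2D})$; this is precisely why Lemma~\ref{transmeasure} was set up using the refined elliptic-logarithm estimates of \cite{DavidHirataKohno} instead of a classical Liouville bound.
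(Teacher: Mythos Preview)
Your argument has a genuine gap at the Siegel step: you assume $(D+1)^2 \ge 2N$ in order to apply Siegel's Lemma, but $N$ is precisely the quantity you are trying to bound. You attempt to close this loop at the end by choosing $D \asymp (\log H)^s$ with $s$ ``large enough in terms of $\mu$ and $\nu$'', but the consistency condition $(D+1)^2 \ge 2N$ combined with your displayed inequality $N \ll D\log H + D^{\mu+\nu}(\log H)^\nu$ forces $2s \ge s(\mu+\nu) + \nu$, i.e.\ $s(2 - \mu - \nu) \ge \nu$. Since the exponents $\mu,\nu$ in Lemma~\ref{transmeasure} come from the elliptic-logarithm bound of \cite{DavidHirataKohno} and there is no reason at all to expect $\mu + \nu < 2$, no admissible $s$ exists in general. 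Letting $D$ depend on $N$ instead (say $D \asymp \sqrt{N}$) hits the same wall: the resulting inequality $N \ll \sqrt{N}\log H + N^{(\mu+\nu)/2}(\log H)^\nu$ is vacuous once $\mu + \nu \ge 2$.

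The paper's proof avoids this entirely by replacing Siegel's Lemma with Proposition~2 of \cite{MasserZeta}. That proposition is a genuine interpolation-determinant argument: it exploits the boundedness of the analytic functions on the larger disk (via contour shifting) to force the relevant determinant to vanish, and thereby produces a nonzero integer polynomial $A$ of degree $T \ll \log H$ vanishing at \emph{all} the rational points, with no a~priori hypothesis on their number. Only after this does one feed Lemma~\ref{transmeasure} into a Jensen-type zero count for $A(z,f(z))$, which then bounds $N$. Your Blaschke/Schwarz step is essentially the same as the paper's Jensen step; what you are missing is the analytic input that decouples the degree of the auxiliary polynomial from $N$.
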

\begin{proof}
Fix an integer $H\ge 3$. In this proof only we write $f_1(z)=z+1/2$ and $f_2(z)=f(z+1/2)$. So $f_1$ is a polynomial, $f_2$ is analytic on $|z|\le \varepsilon$ and $|f_1|,|f_2| \le M$ where $M=\max\{1,M_2\}$, by Lemma \ref{continuationoff}. Let $\mathcal{Z}$ be the set of rationals $q$ with $|q|\le \varepsilon/8$ such that $f_1(q),f_2(q)$ are rationals of height at most $2H$. We show that there are at most $c(\log H)^{\kappa}$ points in $\mathcal{Z}$.  Put $Z=\varepsilon/2,d=1$ and $R=4/\varepsilon$. Then by Proposition 2 on page 2039 of \cite{MasserZeta} (with $A$ there equal to our $R$) there is a non-zero polynomial $A(X,Y)$ of degree at most $T\ll \log H$ such that $A(f_1(q),f_2(q))=0$ for all $q$ in $\mathcal{Z}$. Examining the proof of Proposition 2 (see the proof of 2.1 in \cite{BoxallJones}) we find that we can take $A$ to have integer coefficients of absolute value at most $ 2(T+1)^2(2H)^T$.

 If $A(1/2,Y)$ is the zero polynomial then we can divide $A$ by an appropriate power of $(X-1/2)$ and the resulting polynomial will still vanish at the appropriate points. So we may assume that $P(1/2,Y)$ is not the zero polynomial. By Theorem 1.1 on Page 340 of \cite{Langcomplexanalysis} (see also page 171 in \cite{Titchmarsh}) applied to the disks $|z|\le \varepsilon/8$ and $|z| \le \varepsilon/ 4$ the function $P(f_1(z),f_2(z))$ has at most
\begin{equation}\label{jensenbound}
\frac{1}{\log 2} \left(\log M_A -\log A(\frac{1}{2},a)\right)
\end{equation}
zeros in the disk $|z|\le \varepsilon/8$, where $M_A$ is a bound for the maximum of $A(f_1(z),f_2(z))$ on the disk $|z|\le \varepsilon/4$. So the cardinality of $\mathcal{Z}$ is also bounded by \eqref{jensenbound}. Using the bounds on $|f_1|$ and $|f_2|$ and on the coefficients of $P$ we find that
\[
\log M_A \ll (\log H)^2.
\]
And multiplying $A(1/2,a)$ by $2^{\deg A}$ to clear denominators we have a nonzero polynomial in $a$ with integer coefficients to which we can apply Lemma \ref{transmeasure}. We then find the cardinality of $\mathcal{Z}$ is at most
\begin{displaymath}
c(\log H)^{\kappa}
\end{displaymath}
as required.
\end{proof}
The final ingredients we need are the following bounds on the orders of torsion.
\begin{lemma}\label{galoisbounds}
Suppose that $m$ and $n$ are positive integers and that $t$ is a root of unity of order $m$ such that $P_t$ has order $n$ on $E_t$. Then
\[
m \ll [\IQ(t):\IQ]^2
\]
and
\[
n \ll [\IQ(t):\IQ]^2.
\]
\end{lemma}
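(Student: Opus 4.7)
The two bounds are of quite different natures and I would handle them separately.

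The bound $m\ll[\IQ(t):\IQ]^2$ is purely elementary: since $t$ is a primitive $m$-th root of unity, $[\IQ(t):\IQ]=\varphi(m)$, and the classical inequality $\varphi(m)\ge\sqrt{m/2}$ valid for all $m\ge 1$ immediately gives $m\le 2\varphi(m)^2$. Nothing beyond elementary number theory is required here, and the implied constant is absolute.

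For the bound $n\ll[\IQ(t):\IQ]^2$, the first observation is that $P_t=(2,\sqrt{4-2t})$ is defined over $F'=\IQ(t,\sqrt{4-2t})$, a field of degree at most $2[\IQ(t):\IQ]$ over $\IQ$, and that the cyclic group $\langle P_t\rangle$ of order $n$ is entirely contained in $E_t(F')_{\rm tors}$. Hence it suffices to bound $|E_t(F')_{\rm tors}|$ by a quadratic polynomial in $[\IQ(t):\IQ]$. The plan is to exploit the fact that the $j$-invariant of $E_t$ is absolutely bounded in height: since $t$ is a root of unity, $\height{t}=0$, and the explicit formula $j(E_t)=2^8(t^2-t+1)^3/(t^2(1-t)^2)$ combined with standard properties of the Weil height yields $\height{j(E_t)}=O(1)$; consequently the stable Faltings height $\Fh{E_t}$ is likewise bounded by an absolute constant, for instance by the same sort of estimate used in Lemma \ref{lem:legendreheight}.

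One can then invoke a polynomial torsion bound of the form $|E(K)_{\rm tors}|\ll_{\Fh{E}}[K:\IQ]^2$, in the spirit of Masser, Hindry--Silverman, and David; applied to $K=F'$ and $E=E_t$ this yields
\[
n\le |E_t(F')_{\rm tors}|\ll[F':\IQ]^2\ll[\IQ(t):\IQ]^2,
\]
which is the desired inequality. The main obstacle is locating (or, if necessary, re-proving in effective form) a torsion bound with the clean exponent $2$; weaker polynomial exponents would already suffice for the qualitative conclusion of Theorem \ref{thm:bc0}, so one has some flexibility here, but for the lemma as stated the quadratic exponent is natural because the degree bound on $F'$ already costs a factor of $2$ and a quadratic-in-degree torsion bound with $\Fh{E}$-dependent constants is the form in which the standard transcendence-theoretic technology outputs its results.
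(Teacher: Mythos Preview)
Your proposal is correct and matches the paper's approach: the first bound is the elementary $\varphi(m)\ge\sqrt{m/2}$ (the paper cites Hardy--Wright, Theorem 328), and the second comes from a David-type torsion bound exploiting $\height{t}=0$. The only minor difference is that the paper cites David's Th\'eor\`eme 1.2(i) directly to bound the order $n$ of the single point $P_t$, rather than passing through the full torsion subgroup $E_t(F')_{\rm tors}$ as you do; either route works.
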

\begin{proof}
The second inequality follows from David's Th\'eor\`eme 1.2(i) of \cite[Page 106]{DavidGaloisBounds} as in the proof of \cite[Lemma 5.1, Page 1685]{MZ:torsionanomalousAJ} (since $h(t)=0$) while the first inequality is classical (for example \cite[Theorem 328]{HardyWright} is much stronger than we need).
\end{proof}
We can now complete the proof of theorem \ref{thm:bc0} using the usual argument. So suppose that $t$ is a root of unity of order $m$ such that $P_t$ has order $n$ on $E_t$, for some positive integers $m$ and $n$. Let $N=\textrm{lcm} (m,n)$. Then Lemma \ref{galoisbounds} gives
\begin{equation}\label{ordervsdegree}
N \ll d^4
\end{equation}
where $d=[\IQ(t):\IQ]$. Suppose that $t^{\sigma}$ is a Galois conjugate of $t$. Then $t^{\sigma}$ is still a root of unity and $(2,.)$ is still torsion on $E_{t^{\sigma}}$. So $t^{\sigma}$ would lead to a rational point of height at most $N$ on the graph of $f$, if we had $t^{\sigma}=\exp ( 2\pi i q)$ for some rational $q$ with $|q-1/2|\le \varepsilon/8$. We might not be so lucky, but it is well known that by taking $m$ sufficiently large we can ensure that some fixed positive proportion of the conjugates of $t$ are of this form. So the number, $M$ say, of rational points of height at most $N$ on the graph of $f$ restricted to $|z-1/2| \le \varepsilon/8$ satisfies
\[
M \gg d.
\]
But by Lemma \ref{countinglemma} and \eqref{ordervsdegree} we have
\begin{eqnarray*}
M \ll (\log N)^{\kappa} 
 \ll (\log d)^{\kappa}.
\end{eqnarray*}
So $d$ is bounded above by some absolute effective constant. This completes the proof.


\bibliographystyle{amsplain}
\bibliography{CMrefs}



\end{document}